\numberwithin{equation}{section}
\newtheorem{thm}{Theorem}[section]
\newtheorem{lemma}[thm]{Lemma}
\newtheorem*{thm1}{Theorem 1}
\newtheorem*{thm2}{Theorem 2}
\newtheorem*{thm3}{Theorem 3}
\newtheorem*{thm4}{Theorem 4}
\newtheorem*{thm5}{Theorem 5}
\newtheorem*{thm6}{Theorem 6}
\newtheorem*{corollary}{Corollary}
\newtheorem*{corollary1}{Corollary 1}
\newtheorem*{corollary2}{Corollary 2}
\theoremstyle{definition}
\newtheorem*{example}{Example}
\newtheorem*{rmk}{Remark}
\newtheorem*{rmk1}{Remark 1}
\newtheorem*{rmk2}{Remark 2}
\newtheorem*{defn}{Definition}
\long\def\symbolfootnote[#1]#2{\begingroup%
\def\thefootnote{\fnsymbol{footnote}}\footnote[#1]{#2}\endgroup}
\numberwithin{equation}{section}
\begin{document}
\newcommand{\ul}{\underline}
\newcommand{\be}{\begin{equation}}
\newcommand{\ee}{\end{equation}}
 \baselineskip = 16pt \noindent

\title{On Fibonacci Polynomial Expressions for Sums of $m$th Powers, their implications for Faulhaber's Formula and some Theorems of Fermat}
\date{October 15, 2015}

\author{M. W. Coffey (Colorado School of Mines) and M. C. Lettington (Cardiff)}
\maketitle

\begin{abstract}
Denote by $\Sigma n^m$ the sum of the $m$-th powers of the first $n$ positive integers $1^m+2^m+\ldots +n^m$. Similarly let $\Sigma^r n^m$ be the $r$-fold sum of the $m$-th powers of the first $n$ positive integers,
defined such that $\Sigma^0 n^{m}=n^m$, and then recursively by $\Sigma^{r+1} n^{m}=\Sigma^{r} 1^{m}+\Sigma^{r} 2^{m}+\ldots + \Sigma^{r} n^{m}$. During the early 17th-century, polynomial expressions for the sums $\Sigma^r n^m$ and their factorisation and polynomial basis representation properties were examined by Johann Faulhaber, who published some remarkable theorems relating to these $r$-fold sums in his \emph{Academia Algebrae} (1631).

In this paper we consider families of polynomials related to the Fibonacci and Lucas polynomials which naturally lend themselves to representing sums and differences of integer powers,
as well as $\Sigma^r n^m$. Using summations over polynomial basis representations for monomials we generalise this sum for any polynomial $q(x)\in\mathbb{Q}[x]$, encountering some interesting coefficient families in the process. Other results include using our polynomial expressions to state some theorems of Fermat, where in particular, we obtain an explicit expression for the quotient in Fermat's Little Theorem.

In the final two sections we examine these sums in the context of the three pairs of Stirling number varieties and binomial decompositions. We also derive an expression for $\Sigma n^m$ in terms of the Stirling numbers and Pochhammer symbols, leading to a seemingly new relation between the Stirling numbers and the Bernoulli numbers.
\end{abstract}

\section{Introduction}
\symbolfootnote[0]{2010 \emph{Mathematics Subject Classification}: 11B83; 11B37; 11B39; 11B68; 01A45.\newline
\emph{Key words and phrases}: Special sequences; Fibonacci polynomials; Recurrences; Stirling numbers.\newline
The authors would like to thank Professor M N Huxley for his helpful and perceptive comments. }
Johann Faulhaber, know in his day as \emph{The Great Arithmetician of Ulm} but now almost forgotten, worked out formulae for the integer power sums $1^m+2^m+\ldots +n^m$ in his \emph{Academia Algebrae} (1631) \cite{faulhaber}. The formulas for the sums of squares and cubes of the numbers $1,2,3,\ldots,n$ are given by
\[
\Sigma n^2= 1^2+2^2+3^2+\ldots +n^2=\frac{1}{3}\left (n^3+\frac{3}{2}n^2+\frac{1}{2}n\right )=\frac{1}{6} n (n+1) (2 n+1),
\]
\[
\Sigma n^3=1^3+2^3+3^3+\ldots +n^3=\frac{1}{4}\left (n^4+2n^3+n^2\right )=\frac{1}{4} n^2 (n+1)^2=\left (\Sigma n\right )^2,
\]
where in the notation of Knuth \cite{knuth}, $\Sigma n^m$, denotes the sum of the $m$-th powers of the first $n$ positive integers.

Faulhaber noticed that the sum of the first $n$ cubes is equal to the square of the sum of the first $n$ positive integers, and in exploring this pattern he found that the sums $\Sigma n^m$ could be expressed as polynomials in $\mathbb{Q}[n(n+1)]$ and $(2n+1)\mathbb{Q}[n(n+1)]$, such that
\be
\Sigma n^{2m-1}=\frac{1}{2m} \sum_{j=0}^{m-1} A_j^{(m)} (n(n+1))^{m-j},
\label{eq:in1}
\ee
\be
\Sigma n^{2m}=\frac{2n+1}{(2m+1)(2m+2)} \sum_{j=0}^{m}(m+1-j) A_j^{(m+1)} (n(n+1))^{m-j},
\label{eq:in2}
\ee
for certain coefficients $A_j^{(m)}$, where $A_m^{(m-1)}=0$. The expression $(\ref{eq:in2})$ for the even power sum $\Sigma n^{2m}$, is attributed to Jacobi \cite{jacobi}, who showed
that for $m>j+1$ the coefficients $A_j^{(m)}$ satisfy the recurrence relation
\be
2(m-j)(2m-2j-1)A_j^{(m)}+(m-j+1)(m-j)A_{j-1}^{(m)}=2m(2m-1)A_j^{(m-1)}.
\label{eq:in3}
\ee
Knuth demonstrates that $(\ref{eq:in3})$ is valid for all integers $m$ and also considers the $k+1$ term recurrence relation
\[
A_0^{(m)}=1,\qquad \sum_{i=0}^k \binom{m-i}{2k+1-2i}A_i^{(m)}=0,\qquad k>0,
\]
deducing that
\[
A_m^{(m)}=B_{2m},\qquad 2A_{m-1}^{(m+1)}=(2m+1)(2m+2)A_m^{(m)},\qquad A_{m-2}^{(m)}=\binom{2m}{2} B_{2m-2}.
\]
Here $B_k$ denotes the $k$th Bernoulli number \cite{lett}, \cite{mcl1}, where $B_0=1$, and thereafter.
\be
B_k=-\frac{1}{k+1}\sum_{j=0}^{k-1}\binom{k+1}{j}B_j,
\label{eq:bin1}
\ee
so that $B_k=0$ for all odd values of $k\geq 3$.

Faulhaber did not discover the Bernoulli numbers and so did not realise that they provide the uniform formula
\be
\Sigma n^m=\frac{1}{m+1}\sum_{j=0}^m (-1)^j \binom{m+1}{j}n^{m+1-j}B_j,
\label{eq:bin2}
\ee
which later appeared in Jacob Bernoulli's posthumous masterpiece (referred to as Faulhaber's formula) the \emph{Ars Conjectandi} (1713).

Jacobi expressed ($\ref{eq:bin2}$) in terms of Bernoulli polynomials such that
\be
\Sigma n^m=\frac{1}{m+1}\left (B_{m+1}(n+1)-B_{m+1}(0)\right ),
\label{eq:in4}
\ee
so that $\Sigma n^m$ is a polynomial in $n$ of degree $m+1$.

He used their derivative property $B_m^\prime (x)=m B_{m-1}(x)$, to show that the derivative of $\Sigma n^m$ with respect to $n$ is given by
$m \Sigma n^{m-1} +B_m$, from which $(\ref{eq:in2})$ readily follows from $(\ref{eq:in1})$.

Methods for calculating the $A_j^{(m)}$ coefficients have been of interest and the explicit formula
\be
A_j^{(m)}=(-1)^{m-j}\sum_i \binom{2m}{m-j-i}\binom{m-j+i}{i}\frac{m-j-i}{m-j+i}B_{m+j+i}, \quad 0\leq j<m,
\label{eq:in5}
\ee
was first obtained by Gessel and Viennot \cite{gessel}, who using a combinatorial argument applied to the determinant form of this recurrence, deduced that
$(-1)^j A_j^{(m)}\geq 0$. The numbers $A_j^{(m)}$ can also be obtained by inverting a lower triangular matrix, as shown by Edwards \cite{edwards}.

The formula $(\ref{eq:in5})$, makes it easy to confirm that $A_{m-1}^{(m)}=0$, so that the expression $(\ref{eq:in1})$ for $\Sigma n^{2m-1}$, is a polynomial in $\Sigma n$, and the expression $(\ref{eq:in2})$ for $\Sigma n^{2m}$, is $\Sigma n^2$ times by a polynomial in $\Sigma n$. This pattern was not lost on Faulhaber, who considered similar patterns in the more complex $r$-fold summations of $m$th powers from $1$ to $n$, defined by
\[
\Sigma^0 n^{m}=n^m,\qquad \Sigma^{r+1} n^{m}=\Sigma^{r} 1^{m}+\Sigma^{r} 2^{m}+\ldots + \Sigma^{r} n^{m},
\]
so that so that $\Sigma^r n^m$ is a polynomial in $n$ of degree $m+r$.

He found that $\Sigma^{r} n^{2m}$ can be written as $\Sigma^{r} n^{2}$ times a polynomial in $(n(n+r))$, and that $\Sigma^{r} n^{2m-1}$ can be written as $\Sigma^{r} n$ times a polynomial in $(n(n+r))$, where
\be
\Sigma^{r} n = \binom{n+r}{r+1},\qquad \Sigma^{r} n^2 = \frac{2n+r}{r+2}\binom{n+r}{r+1}.
\label{eq:in6}
\ee
Hence, leaving the even power factor $\frac{2n+r}{r+2}$ we have that
\be
\Sigma^{r} n^{m}=\binom{n+r}{r+1}G_{m}^{(r)}(n),\quad \text{\rm with}\quad G_{m}^{(r)}(n)=\sum_{k=0}^{m-1} g_{m\, k}^{(r)} n^k,
\label{eq:in61}
\ee
so that $G_{m}^{(r)}(n)$ is a polynomial in $n$ of degree $m-1$.

In this paper we consider families of polynomials related to the Fibonacci and Lucas polynomials which naturally lend themselves to representing sums and differences of integer powers. Using these polynomials we obtain expressions for $\Sigma^{} n^{m}$ and $\Sigma^{2} n^m$, which lead to the relations
(Corollary to Theorem 5)
\be
\frac{1}{2m+1}\sum_{j=1}^{m+1}\binom{2m+1}{2j-1}\left (\Sigma^r n^{2j-1}\right ) B_{2m+2-2j}=\Sigma^{r+1} n^{2m}-\frac{1}{2}\Sigma^r n^{2m},
\label{eq:in7}
\ee
\be
\frac{1}{2m}\sum_{j=1}^{m}\binom{2m}{2j}\left (\Sigma^r n^{2j}\right ) B_{2m-2j}=\Sigma^{r+1} n^{2m-1}-\frac{1}{2}\Sigma^r n^{2m-1},
\label{eq:in8}
\ee
where it follows that the equation $(\ref{eq:in7})$ can be divided through by $\frac{1}{2}\Sigma^{r} n^2$ and the equation $(\ref{eq:in8})$ by $\frac{1}{2}\Sigma^{r} n$. After division these expressions can be written as polynomials in $\mathbb{Q}[n(n+r)]$, as the left hand side of each equality is such a polynomial. For example
\[
\Sigma^{4} n^{5}-\frac{1}{2}\Sigma^3 n^{5}=
\frac{1}{240} n (n+1) (n+2) (n+3) (2 n+3) \left(\frac{5}{126} n^2 (n+3)^2+\frac{10}{63} n (n+3)-\frac{17}{63}\right).
\]

A recurring theme in this paper is the concept of an $r$-fold sum over the first $n$ integers to the $m$th powers, and in Theorem 1 this sum is generalised to any polynomial $q(x)\in\mathbb{Q}[x]$, via summations over polynomial basis representations for monomials.

Other results include using our polynomial expressions to state some theorems of Fermat, where in particular, we obtain an explicit expression for the quotient in Fermat's Little Theorem.

Central to our considerations are the binomial coefficients and the more elaborate coefficients $T_k(n)$ and $U_k(n)$, defined by
 \[
 T_k(n)=\binom{n+k+1}{2k+1}+\binom{n+k}{2k+1}=\frac{2n+1}{2k+1}\binom{n+k}{2k},
 \]
 \[
 U_k(n)=\binom{n+k}{2k}+\binom{n+k-1}{2k}=\frac{n}{k}\binom{n+k-1}{2k-1},
 \]
 so that
 \[
 T_k(n)-T_k(n-1)=U_k(n),\qquad U_k(n)-U_k(n-1)=T_{k-1}(n-1),
 \]
giving $\Sigma  U_k(n) =  T_k(n)$.

These coefficients occur in an integer coefficient representation of the sum $\Sigma n^m$, as do the \emph{integer central factorial numbers}  $\mathcal{T}_k^{(r)}$ \cite{riordan} (p273), and in Section 4 we examine these expressions in the context of the bigger picture of the three pairs of Stirling number varieties and binomial decompositions. We also derive a seemingly new expression for $\Sigma n^m$ in terms of the Stirling numbers and Pochhammer symbols, leading to an interesting relation between the Stirling numbers and the Bernoulli numbers.

\section{Relations and Expressions for Families of Polynomials}

\begin{defn}[of Fibonacci and Lucas polynomials]
The Fibonacci polynomials $F_n(x)$, are defined by the recurrence relation
\be
F_{n+1}(x)=x F_n(x) +F_{n-1}(x), \qquad\text{with}\qquad F_1(x)=1,\,\,\, F_2(x)=x,
\label{eq:i17}
\ee
or by the explicit sum formula
\be
F_n(x)=\sum_{j=0}^{[(n-1)/2]}\binom{n-j-1}{j}x^{n-2j-1}
=x^{n-1} \,
_2F_1\left(\frac{1}{2}-\frac{n}{2},1-\frac{n}{2};1-n;-\frac{4}{x^2}\right),
\label{eq:175}
\ee
where $_2F_1$ denotes the Gauss hypergeometric function.

Similarly, the Lucas polynomials $L_n(x)$ can also be defined either by the recurrence relation in $(\ref{eq:i17})$, but with initial values $L_1(x)=x$,
$L_2(x)=x^2+2$, or by the explicit sum formula.
\be
L_n(x)=\sum _{j=0}^{\left\lfloor n/2\right\rfloor }\frac{n}{n-j}  \binom{n-j}{j}
x^{n-2 j}
=x^n \, _2F_1\left(\frac{1}{2}-\frac{n}{2},-\frac{n}{2};1-n;-\frac{4}{x^2}\right).
\label{eq:176}
\ee
 We define the reciprocal type of polynomials $F^{\rm Inv}_n(x)$ and $L^{\rm Inv}_n(x)$, such that
 \[
 F_n^{\rm Inv}(x)=x^{n-1} F_n\left (\frac{1}{x}\right ), \qquad  L_n^{\rm Inv}(x)=x^{n} L_n\left (\frac{1}{x}\right ),
 \]
 so that $F_n^{\rm Inv}(x)$ satisfies the recurrence relation $F_{n+2}^{\rm Inv}(x)=F_{n+1}^{\rm Inv}(x)+x^2 F_n^{\rm Inv}(x)$, as does $L_n^{\rm Inv}(x)$.
\end{defn}

\begin{rmk}[to the definition]
It was shown in \cite{koshy} that the Fibonacci and Lucas polynomials have the divisibility properties
\be
F_n(x)\vert F_m(x) \Leftrightarrow n\vert m,\qquad F_n\left (U_{p-1}\left (\sqrt{5}/2\right )\right )=F_{np}/F_p,
\label{eq:i18}
\ee
and
\be
L_n(x)\vert L_m(x) \Leftrightarrow m=(2k+1)n,\qquad \text{for some integer $k$}.
\label{eq:i185}
\ee
The Fibonacci sequence $1,1,2,3,5,8,13,21,\ldots$, is then obtained by setting $x=1$, so that $F_n=F_n(1)$, and similarly for the Lucas sequence with $L_n=L_n(1)$. Taking $F_n=F^{\rm Inv}_n(1)$ and $L_n=L^{\rm Inv}_n(1)$ also gives the desired sequences and it follows from the definitions that our polynomials
$F^{\rm Inv}_n(x)$ and $L^{\rm Inv}_n(x)$ also exhibit the divisibility properties described above.

\end{rmk}
%Furthermore, for $n$ a non-negative integer, and $0\leq \theta \leq \pi$, let $T_n(x)$ and $U_n(x)$ be respectively the Chebyshev polynomials of the first and second kinds, defined by
%\be
%T_n(\cos{\theta})=\cos{n\theta},\qquad U_n(\cos{\theta})=\frac{\sin{(n+1)\theta}}{\sin{\theta}},
%\label{eq:i1}
%\ee
%and let $\mathcal{C}_n(x)=2T_n(x/2)$, $S_n(x)=U_n(x/2)$, where $i=\sqrt{-1}$. Then we can express the Fibonacci and Lucas numbers in terms of Chebyshev polynomials such that
%\be
%F_{n+1}=\frac{S_n(i)}{i^n},\qquad L_{n}=\frac{\mathcal{C}_n(i)}{i^n},\qquad n=0,1,2,\ldots
%\label{eq:i2}
%\ee
For consistency, in the following polynomial definitions, the subscript $n$ denotes the degree of the polynomial, apart from $\mathcal{Q}^{\rm Inv}_n(x)$, which is of
degree $n-1$.
\begin{defn}[of polynomials $P_n(x)$, $Q_n(x)$, $P^{\rm Inv}_n(x)$ and $Q^{\rm Inv}_n(x)$]
For natural number $m$, we define $P_n(x)$, $Q_n(x)$, $P^{\rm Inv}_n(x)$ and $Q^{\rm Inv}_n(x)$ to be the polynomials of degree $n$  given by
\be
P_n(x)=\sum_{k=0}^{n}T_k(n) x^{k},\qquad P_n^{\rm Inv}(x)=\sum_{k=0}^{n}T_k(n)x^{n-k}
\label{eq:s2}
\ee
\be
Q_n(x)=\sum_{k=0}^{n}U_k(n)x^k,\qquad Q_n^{\rm Inv}(x)=\sum_{k=0}^{n}U_k(n)x^{m-k},
\label{eq:s21}
\ee
so that
\[
P_n^{\rm Inv}(x)=x^n P_n\left (\frac{1}{x}\right ),\qquad Q_n^{\rm Inv}(x)=x^n Q_n\left (\frac{1}{x}\right ),
\]
where the identity
\[
\mathop{\rm lim}_{k\rightarrow 0}\,\,\frac{j}{k}\binom{j+k-1}{2k-1}=2,
\]
gives $Q_0(x)=2$ and similarly for $Q_0^{\rm Inv}(x)=1$, so that are $Q_n(x)$ and $Q^{\rm Inv}_n(x)$ are well defined.

For example, when $n=3$ we have
\[
P_3(x)=x^3+7 x^2+14 x+7,\qquad P_3^{\rm Inv}(x)=7 x^3+14 x^2+7 x+1,
\]
\[
Q_3(x)=x^3+6 x^2+9 x+2,\qquad Q_3^{\rm Inv}(x)=2 x^3+9 x^2+6 x+1,
\]
\end{defn}
%We label the roots of $P_m(x)$, ordered in terms of increasing absolute value, by $\mu_{m 1}, \mu_{m 2},\ldots, \mu_{m m}$, and similarly $\nu_{m 1}, \nu_{m 2},\ldots, \nu_{m m}$ the %ordered $m$ roots of $Q_m(x)$, so that we may write
%\be
%P_m(x)=\prod_{i=1}^m (x-\mu_{m i}),\quad Q_m(x)=\prod_{i=1}^m (x-\nu_{m i}),
%\label{eq:s4}
%\ee
%where for $i <j$, we have $|\mu_{m i}|\leq |\mu_{m j}|$, and $|\nu_{m i}|\leq |\nu_{m j}|$. Hence the roots of $P^{\rm Inv}_m(x)$ and $Q^{\rm Inv}_m(x)$, ordered in terms of decreasing %absolute value, are respectively given by $\mu^{-1}_{m 1}, \mu^{-1}_{m 2},\ldots, \mu^{-1}_{m m}$, and $\nu^{-1}_{m 1}, \nu^{-1}_{m 2},\ldots, \nu^{-1}_{m m}$.

\begin{rmk}[to the definition]
It was shown in Theorem 1 of \cite{HDF}, that
\be
P_n(x)=\frac{1}{\sqrt{x}}L_{2n+1}\left (\sqrt{x}\right )
 ={1 \over \sqrt{x}}\left (F_{2n+2}(\sqrt{x})+F_{2n}(\sqrt{x})\right),
\label{eq:s6}
\ee
and
\be
Q_n(x)=L_{2n}\left (\sqrt{x}\right )
=F_{2n+1}(\sqrt{x})+F_{2n-1}(\sqrt{x}).
\label{eq:s7}
\ee
It can similarly be shown that
\be
 P^{\rm Inv}_n(x)=L^{\rm Inv}_{2n+1}\left (\sqrt{x}\right ),\qquad Q^{\rm Inv}_n(x)=L^{\rm Inv}_{2n}\left (\sqrt{x}\right ).
\label{eq:s8}
\ee
\end{rmk}

\begin{defn}[of polynomials $\mathcal{P}_n(x)$, $\mathcal{Q}_n(x)$, $\mathcal{P}^{\rm Inv}_n(x)$ and $\mathcal{Q}^{\rm Inv}_n(x)$]
For natural number $n$, we define $\mathcal{P}_n(x)$, $\mathcal{Q}_n(x)$, $\mathcal{P}^{\rm Inv}_n(x)$ and $\mathcal{Q}^{\rm Inv}_n(x)$
such that
\be
\mathcal{P}_n(x)=\sum_{k=0}^{n}\binom{n+k}{2k}x^{k},\qquad \mathcal{P}_n^{\rm Inv}(x)=\sum_{k=0}^{n}\binom{n+k}{2k}x^{n-k}
\label{eq:s20}
\ee
\be
\mathcal{Q}_n(x)=\sum_{k=0}^{n}\binom{n+k-1}{2k-1}x^k,\qquad \mathcal{Q}_n^{\rm Inv}(x)=\sum_{k=0}^{n}\binom{n+k-1}{2k-1}x^{n-k},
\label{eq:s210}
\ee
where $\binom{n-1}{-1}=0$ and $\binom{-1}{-1}=1$, so that
\[
\mathcal{Q}_0(x)=\mathcal{Q}_0^{\rm Inv}(x)=1,\qquad\mathcal{P}_n^{\rm Inv}(x)=x^n \mathcal{P}_n\left (\frac{1}{x}\right ),\qquad \mathcal{Q}_n^{\rm Inv}(x)=x^n \mathcal{Q}_n\left (\frac{1}{x}\right ),
\]
\end{defn}

\begin{rmk}[to the definition]
It was also shown in Theorem 1 of \cite{HDF} that
\be
\mathcal{P}_n(x)=F_{2n+1}\left (\sqrt{x}\right ),\qquad
\mathcal{Q}_n(x)=\sqrt{x}\,F_{2n}\left (\sqrt{x}\right ),
\label{eq:q1}
\ee
and from the definitions, it follows similarly that
\be
 \mathcal{P}^{\rm Inv}_n(x)=F^{\rm Inv}_{2n+1}\left (\sqrt{x}\right ),\qquad \mathcal{Q}^{\rm Inv}_n(x)=F^{\rm Inv}_{2n}\left (\sqrt{x}\right ).
 \label{eq:q101}
\ee
\end{rmk}
If one distinguishes between odd and even indices, then the following results can all be stated in terms of the four types of Fibonacci and Lucas polynomials described previously. However, having established the links to these polynomials, for clarity and consistency we will express our results via the eight types of $P$ and $Q$ polynomials.

It was shown by Knuth in \cite{knuth} that given an arbitrary function $f(n)$, defined on the integers, there exists a unique polynomial expression in $n$ of the form
\[
f(n)=\sum_{k\geq 0} a_k \binom{n+\lfloor k/2\rfloor}{k},
\]
for some coefficients $a_k$, where the $a_k$ are integers if and only if $f(n)$ is always an integer. He also states that there is a unique expansion
\[
f(n)=b_0 T_0(n)+b_1 U_1(n)+b_2 T_1(n)+b_3 U_2(n)+b_4 T_2(n)+\ldots,
\]
in which the $b_k$ are integers if and only if $f(n)$ is always an integer.

In the following Lemma, we take a slightly different perspective on this approach by demonstrating that each of our polynomial families form a basis for the vector space of polynomials over some given field. It follows that every polynomial function has a power series expansion in $x$, and we give the example $f(x)=e^{bx}$.

\begin{lemma}[Polynomial basis lemma] Concerning the monomial $x^n$ we find that
\be
x^n=\sum_{j=0}^n (-1)^j\binom{2n+1}{j} P_{n-j}(x)
=(-1)^n \binom{2 n}{n}+\sum _{j=0}^{n-1} (-1)^j \binom{2 n}{j}  Q_{n-j}(x)
\label{eq:lm21}
\ee
and $x^n=$
\be
\sum_{j=0}^n (-1)^j\binom{2n+1}{j} \frac{2 n+1-2 j}{2n+1} \mathcal{P}_{n-j}(x)=
\sum _{j=0}^{n-1} (-1)^j \binom{2 n}{j} \frac{2 n-2 j}{2n} \mathcal{Q}_{n-j}(x).
\label{eq:lm22}
\ee
Hence for $m=0,1,2,\ldots$ each of the four families of polynomials generated by $P_m(x)$, $Q_m(x)$, $\mathcal{P}_m(x)$ and $\mathcal{Q}_m(x)$ form
a polynomial basis.

\end{lemma}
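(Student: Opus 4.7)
The basis assertion is really a corollary of the explicit expansions: each of the four defining sums is monic of degree $n$ in $x$ (the top coefficients are $T_n(n)=U_n(n)=\binom{2n}{2n}=\binom{2n-1}{2n-1}=1$), so the first $n+1$ polynomials in any of the four families form a triangular basis for the polynomials of degree $\le n$ over the ambient field. Thus the content of the lemma lies in the inversion formulas $(\ref{eq:lm21})$ and $(\ref{eq:lm22})$.

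To prove those, the plan is to pass to the Binet parameterisation $\sqrt{x}=2\sinh\phi$. Taking $\alpha=e^{\phi}$, $\beta=-e^{-\phi}$ as the roots of $z^2-\sqrt{x}\,z-1=0$ and combining the Fibonacci and Lucas Binet formulas with the identifications $(\ref{eq:s6})$, $(\ref{eq:s7})$, $(\ref{eq:q1})$, $(\ref{eq:q101})$, one obtains the closed forms
\[
P_m(x)=\frac{\sinh((2m+1)\phi)}{\sinh\phi},\quad Q_m(x)=2\cosh(2m\phi),\quad \mathcal{P}_m(x)=\frac{\cosh((2m+1)\phi)}{\cosh\phi},\quad \mathcal{Q}_m(x)=\frac{2\sinh\phi\,\sinh(2m\phi)}{\cosh\phi}.
\]
Since $x^n=(2\sinh\phi)^{2n}=(e^\phi-e^{-\phi})^{2n}$ in this parameterisation, each identity becomes a statement about a sum of $\sinh$ or $\cosh$ at odd or even multiples of $\phi$, to be proved by a single binomial expansion followed by pairing of symmetric terms.

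For the $P_m$ formula in $(\ref{eq:lm21})$ one multiplies both sides by $\sinh\phi$, so that the LHS becomes $\tfrac{1}{2}(e^\phi-e^{-\phi})^{2n+1}$; a binomial expansion paired under $k\leftrightarrow 2n+1-k$, using the antisymmetry of $(-1)^k\binom{2n+1}{k}$, produces $\sum_{j=0}^n(-1)^j\binom{2n+1}{j}\sinh((2n+1-2j)\phi)$, which is exactly $\sinh\phi\cdot\sum_j(-1)^j\binom{2n+1}{j}P_{n-j}(x)$. The $Q_m$ formula in $(\ref{eq:lm21})$ follows from the same pairing applied to $(e^\phi-e^{-\phi})^{2n}$, the self-paired central $k=n$ term supplying the isolated constant $(-1)^n\binom{2n}{n}$. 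For the $\mathcal{P}_m$ formula in $(\ref{eq:lm22})$ one multiplies by $\cosh\phi$ and pairs terms in $(e^\phi-e^{-\phi})^{2n}(e^\phi+e^{-\phi})$; for the $\mathcal{Q}_m$ formula one multiplies by $\cosh\phi/(2\sinh\phi)$ and works with $(e^\phi-e^{-\phi})^{2n-1}(e^\phi+e^{-\phi})$ instead. In both cases the symmetric pairing yields combined-pair coefficients $(-1)^j[\binom{2n}{j}-\binom{2n}{j-1}]$ and $(-1)^j[\binom{2n-1}{j}-\binom{2n-1}{j-1}]$ respectively, which coincide with the stated coefficients via the elementary Pascal-rule identities
\[
\binom{2n+1}{j}\frac{2n+1-2j}{2n+1}=\binom{2n}{j}-\binom{2n}{j-1},\qquad \binom{2n}{j}\frac{2n-2j}{2n}=\binom{2n-1}{j}-\binom{2n-1}{j-1}.
\]
The main obstacle is not any deep combinatorics but the careful parity and index bookkeeping required in the four parallel pairings, so that each yields the correct odd or even multiple of $\phi$ with the correct sign; once this is set up, the four inversions appear as specialisations of essentially the same binomial-expansion argument, and the basis conclusion then follows immediately.
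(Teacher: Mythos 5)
Your proof is correct, but it takes a genuinely different route from the paper, whose entire argument for the two displays is the single sentence that they ``follow by induction on $n$'' (presumably via the recurrences of Lemma 2.2, though no details are supplied). Your approach instead substitutes $\sqrt{x}=2\sinh\phi$, so that the quadratic $z^2-\sqrt{x}\,z-1$ has roots $e^{\phi}$ and $-e^{-\phi}$, and the identifications \eqref{eq:s6}, \eqref{eq:s7}, \eqref{eq:q1}, \eqref{eq:q101} turn all four families into ratios of hyperbolic functions; the four inversion formulas then drop out of a single binomial expansion of $(e^\phi-e^{-\phi})^{2n\pm 1}$ (times $e^\phi+e^{-\phi}$ where needed) under the pairing $k\leftrightarrow$ (degree $-\,k$), with the central self-paired term of the even case supplying the isolated constant $(-1)^n\binom{2n}{n}$, and the Pascal-rule identities $\binom{2n}{j}-\binom{2n}{j-1}=\binom{2n+1}{j}\tfrac{2n+1-2j}{2n+1}$ and $\binom{2n-1}{j}-\binom{2n-1}{j-1}=\binom{2n}{j}\tfrac{2n-2j}{2n}$ converting the paired coefficients into the stated ones. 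I checked these pairings and coefficient identities and they are all correct, as is your observation that each family is monic of degree equal to its index (since $T_n(n)=U_n(n)=1$), which gives the triangularity needed for the basis conclusion. What your route buys is a uniform, fully explicit derivation of all four identities from one computation, which is considerably more informative than the paper's unelaborated appeal to induction; what the paper's route buys (if carried out) is that it stays entirely within the binomial-coefficient definitions and avoids the transcendental parameterisation. One small point worth making explicit in your write-up: the substitution $\sqrt{x}=2\sinh\phi$ only parameterises $x>0$, so you should add the (standard) remark that two polynomials agreeing on infinitely many values of $x$ agree identically.
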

\begin{corollary}
Suppose that a function $f$ has a power series expansion,
$$f(x)=\sum_{n=0}^\infty a_nx^n,$$
where $a_n=f^{(n)}(0)/n!$.  Then in terms of the polynomials $P_n(x)$ we obtain
$$f(x)=\sum_{j=0}^\infty c_jP_j(x),$$
where the expansion coefficients are given by
$$c_j=(-1)^j \sum_{n=j}^\infty (-1)^n{{2n+1} \choose {n-j}}a_n.$$
Similar series expansions can be obtained in terms of the polynomials $Q_n(x)$, $\mathcal{P}_n(x)$ and $\mathcal{Q}_n(x)$.
\end{corollary}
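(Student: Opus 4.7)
The plan is to substitute the monomial expansion from the Polynomial basis lemma directly into the power series for $f(x)$ and then reorder the double sum. From equation $(\ref{eq:lm21})$ we have the identity
\[
x^n=\sum_{j=0}^n (-1)^j\binom{2n+1}{j} P_{n-j}(x),
\]
so inserting this into $f(x)=\sum_{n=0}^\infty a_n x^n$ produces
\[
f(x)=\sum_{n=0}^\infty a_n \sum_{j=0}^n (-1)^j \binom{2n+1}{j} P_{n-j}(x).
\]
The first step is then a change of summation index: set $k=n-j$, so that $j=n-k$ and the inner sum runs over $0\leq k\leq n$, giving
\[
f(x)=\sum_{n=0}^\infty a_n \sum_{k=0}^n (-1)^{n-k}\binom{2n+1}{n-k} P_k(x).
\]

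Next I would swap the order of summation so that the outer sum ranges over $k$. The constraint $0\leq k\leq n$ becomes $n\geq k$, yielding
\[
f(x)=\sum_{k=0}^\infty P_k(x) \,(-1)^k \sum_{n=k}^\infty (-1)^n \binom{2n+1}{n-k} a_n,
\]
which exhibits the claimed coefficient $c_k=(-1)^k\sum_{n=k}^\infty (-1)^n \binom{2n+1}{n-k} a_n$. The corresponding formulas for the $Q_n$, $\mathcal{P}_n$ and $\mathcal{Q}_n$ expansions follow by the same manipulation, using the remaining three identities of the Polynomial basis lemma (equations $(\ref{eq:lm21})$ and $(\ref{eq:lm22})$), the only change being the binomial weight attached to each basis polynomial.

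The main obstacle, and the only nontrivial point, is justifying the interchange of summation order. Since this is purely a formal manipulation of a power series when regarded as a re-expansion in a different polynomial basis, I would present it as a formal equality of coefficient generating expressions, or, if one wants an analytic statement, note that within any compact set on which $\sum |a_n|\,|x|^n$ converges absolutely one has $|P_k(x)|$ bounded polynomially in $k$ (from the defining sum $(\ref{eq:s2})$) and the binomial factors $\binom{2n+1}{n-k}$ are dominated, so Fubini applies and the rearrangement is rigorous on the interior of the disc of convergence of $f$. With that justification the calculation above yields the stated expression for $c_j$.
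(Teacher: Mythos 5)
Your computation is essentially the paper's own argument run in the opposite direction: the paper starts from $\sum_j c_jP_j(x)$, inserts the formula for $c_j$, and reorders to recover $\sum_n a_nx^n$ via the basis identity $(\ref{eq:lm21})$, whereas you substitute $(\ref{eq:lm21})$ into the power series and reorder; both hinge on exactly the same double-sum interchange, which the paper treats as purely formal. One caveat about your added analytic justification: the claim that $|P_k(x)|$ is bounded polynomially in $k$ on compact sets is false --- by $(\ref{eq:s6})$ one has $P_k(1)=L_{2k+1}$, the Lucas numbers, so $P_k(x)$ grows exponentially in $k$ for fixed $x>0$. The interchange can still be made rigorous (a crude bound gives $\sum_{k\le n}\binom{2n+1}{n-k}|P_k(x)|=O\bigl((|x|+4)^{n}\bigr)$, so Fubini applies whenever $|x|+4$ lies inside the radius of convergence of $\sum|a_n|t^n$), but that gives a strictly smaller region of validity than the disc of convergence of $f$, not its interior as you assert. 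Since the paper offers no justification at all, this does not put you behind its standard, but the specific bound you cite would not survive scrutiny.
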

\begin{proof}
The identities in ($\ref{eq:lm21}$) and ($\ref{eq:lm22}$) follow by induction on $n$. To see the Corollary, by ($\ref{eq:lm21}$) we have
\[
\sum_{j=0}^\infty c_jP_j(x)=\sum_{j=0}^\infty\left ( (-1)^j \sum_{n=j}^\infty (-1)^n{{2n+1} \choose {n-j}}a_n \right ) P_j(x)=\sum_{n=0}^\infty a_nx^n,
\]
as required.
\end{proof}

\begin{example}[of the Corollary]  Let $f(x)=e^{bx}$, with $a_n=b^n/n!$.  Then we obtain the following
representation, wherein $I_j$ denotes the modified Bessel function of the first kind.
$$f(x)=e^{bx}=e^{-2b}\sum_{j=0}^\infty \left( I_j(2b)-I_{j+1}(2b)\right )P_j(x).$$
%[For $x=0$, this expression properly reduces to $1=1\cdot 1=1$, since $I_j(0)=\delta_{j0}$, $\delta_{jk}$
%the Kronecker symbol, for integer $j$.]
Hence we have the identity
$$e^b=e^{-2b}\sum_{j=0}^\infty \left (I_j(2b)-I_{j+1}(2b)\right )L_{2j+1}.$$

Similarly, if $f(x)=\sum_{n=0}^\infty a_nx^n$, then
$$f(x)=\sum_{j=0}^\infty c_j^Q Q_j(x)-c_0^Q,$$
where
$$c_j^Q=(-1)^j \sum_{n=j}^\infty (-1)^n {{2n} \choose {n-j}}a_n, ~~c_0^Q=\sum_{n=0}^\infty (-1)^n {{2n} \choose n}a_n.$$
In particular,
$$f(x)=e^{bx}=e^{-2b}\left(\sum_{j=0}^\infty I_j(2b)Q_j(x)-I_0(2b)\right)$$
$$=e^{-2b}\left(\sum_{j=1}^\infty I_j(2b)Q_j(x)+I_0(2b)\right),$$
%[this again reduces properly for $b=0$ as $f(0)=1=1\cdot 2-1=1$.]
and it follows that
$$e^b=e^{-2b}\left(\sum_{j=0}^\infty I_j(2b)(F_{2j-1}(x)+F_{2j+1}(x))-I_0(2b)\right)$$
$$=e^{-2b}\left(\sum_{j=1}^\infty I_j(2b)(F_{2j-1}(x)+F_{2j+1}(x))+I_0(2b)\right).$$
\end{example}

\begin{rmk}[regarding the method of finite differences]
If $f(x)$ is a polynomial of degree $m$ with integer coefficients, then the $a_k$ will be integers, and for $k>m$ will be zero. One way of viewing this is
via the method of finite differences. Define $\Delta f(n)=f(n+1)-f(n)$, so that $\Delta^0 f(n)=f(n)$, and in general the $r$th finite difference is given by the sum
\[
\Delta^r f(n)=\sum_{j=0}^r (-1)^j \binom{r}{j}f(n+r-j).
\]
Then the above finite difference sum holds for all $r\geq 0$, and for $r=m+t>m$ say, with $t\geq 1$, the sum degenerates to
\[
\sum_{j=0}^{m+t} (-1)^j \binom{m+t}{j}=0,
\]
yielding
\be
f(x)=-\sum_{j=1}^{m+t} (-1)^j \binom{m+t}{j}f(x-j).
\label{eq:fds}
\ee
For example, taking $f(x)=x^m$ and integer $t\geq 1$ in $(\ref{eq:fds})$ gives
\[
n^{m}=\sum_{j=1}^{m+t} (-1)^{j-1}\binom{m+t}{j}(n-j)^{m}.
\]
Similarly, summing over $x=1,\ldots n$ in the first display of Lemma 2.1 gives
\[
\Sigma n^m =\sum_{x=1}^n x^m=\sum_{j=0}^m (-1)^j\binom{2m+1}{j}\sum_{x=1}^n P_{m-j}(x).
\]
Defining the coefficients $C^P_{m\,n}=\sum_{x=1}^n P_{m}(x)$ (similarly $C^Q_{m\,n}=\sum_{x=1}^n Q_{m}(x)$) then gives us
\be
\Sigma n^m =\sum_{j=0}^m (-1)^j\binom{2m+1}{j}C^P_{(m-j)\,n}
\label{eq:c410}
\ee
where due to the underpinning polynomial definition of $C^P_{m\,n}$, for integer $t\geq 1$, the coefficients satisfy the recurrence relation
\be
C^P_{m\,n}=\sum_{j=1}^{m+t} (-1)^{j-1}\binom{m+t}{j}C^P_{m\,(n-j)}.
\label{eq:tm410}
\ee
We note that $(\ref{eq:c410})$ gives a fairly uneconomic way of writing Faulhaber's sum with integer coefficients. For example, taking $m=3$ and $n=5$ we have
\[
225=1^3+2^3+3^3+4^3 +5^3=855\binom{7}{0}-155\binom{7}{1}+30\binom{7}{2}-5\binom{7}{3}.
\]
However, akin to Faulhaber's $r$-fold sum notation, we can define
\[
\Sigma^0 C^P_{m\,n}=P_m(n),\qquad \Sigma^{r+1} C^P_{m\,n}=\Sigma^{r} C^P_{m\,1}+\Sigma^{r} C^P_{m\,2}+\ldots + \Sigma^{r} C^P_{m\,n},
\]
so that $(\ref{eq:c410})$ may be generalised for integer $r\geq 0$ to
\[
\Sigma^{r} n^m =\sum_{j=0}^m (-1)^j\binom{2m+1}{j}\Sigma^{r}C^P_{(m-j)\,n}.
\]
In fact this method allows us to generalise the $r$-fold sum of the first $m$ powers to a general polynomial $q(x)$. With the notation
$\Sigma q(n)^m =q(1)^m+q(2)^m+\ldots +q(n)^m$, we can define the
coefficients
\be
C^P_{m\,q(n)}=\sum_{x=1}^n P_{m}(q(x)),
\label{eq:psum}
\ee
so that
\be
\Sigma^{r} q(n)^m =\sum_{j=0}^m (-1)^j\binom{2m+1}{j}\Sigma^{r}C^P_{(m-j)\,q(n)}.
\label{eq:psum1}
\ee
For example, if $q(x)=x^2+x$, then we have
\[
\Sigma^{r} (n^2+n)^m=\sum_{j=0}^m (-1)^j\binom{2m+1}{j}\Sigma^{r}C^P_{(m-j)\,q(n)},
\]
which in the specific case when $m=2$ and $r=3$, gives
\[
\sum_{j=0}^2 (-1)^j\binom{5}{j}\Sigma^{3}C^P_{(2-j)\,q(n)}
=\frac{1}{70} \binom{n+2}{3} \left(2 n^4+22 n^3+119 n^2+369 n+818\right)
\]
\[
-5\times \frac{1}{10}  \binom{n+2}{3} \left(n^2+7 n+42\right) +10\times
 \binom{n+2}{3}
\]
\[
=\frac{4}{7}  \binom{n+4}{5} \left(n^2+4 n+2\right)=\Sigma^3 (n^2+n)^2.
\]
Taking $n=5$, we have that the 3-fold sum over the squares of the first 5 integers is 3384.
For $q(n)=n^2+n$, there are analogous results of $(\ref{eq:in6})$, such as $\Sigma^{r}C^P_{(m-j)\,n}$ being divisible by $\binom{n+r-1}{r}$.
We have just proved the following theorem.
\end{rmk}
\begin{thm1}
Let $q(x)\in\mathbb{Q}[x]$ be a polynomial of degree $d\geq 1$ in $x$ with non-constant term, and let the coefficients $C^P_{m\,q(n)}$ be defined as in $(\ref{eq:psum})$, with the notation
\[
\Sigma^0 C^P_{m\,q(n)}=P_m(q(n)),\qquad \Sigma^{r+1} C^P_{m\,q(n)}=\Sigma^{r} C^P_{m\,q(1)}+\Sigma^{r} C^P_{m\,q(2)}+\ldots + \Sigma^{r} C^P_{m\,q(n)}.
\]
Then the $r$th-fold sum over the first $n$ integers of the polynomial $q(x)$ to the $m$th power, denoted by $\Sigma^{r} q(n)^m$, is given by the relation $(\ref{eq:psum1})$.
\end{thm1}
\begin{lemma}
Our families of polynomials satisfy the recurrence relations
\be
x P_{m-1}(x)=Q_{m}(x)-Q_{m-1}(x),\qquad Q_{m}(x)=P_{m}(x)-P_{m-1}(x),
\label{eq:lm31}
\ee
\be
x\mathcal{P}_{m-1}(x)=\mathcal{Q}_{m}(x)-\mathcal{Q}_{m-1}(x),\qquad  \mathcal{Q}_{m}(x)=\mathcal{P}_{m}(x)-\mathcal{P}_{m-1}(x).
\label{eq:lm32}
\ee
\end{lemma}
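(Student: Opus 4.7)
The plan is to prove all four identities by coefficient comparison, reducing everything to two elementary facts already recorded in the introduction: the identities $T_k(n)-T_k(n-1)=U_k(n)$ and $U_k(n)-U_k(n-1)=T_{k-1}(n-1)$ (with the convention $T_{-1}=0$), together with Pascal's rule for the ordinary binomial coefficients. I expect essentially no obstacle here; the argument is pure bookkeeping, and the only subtlety is handling the boundary indices $k=0$ and $k=m$ so that the degree counts match on both sides.

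For the second equation in (\ref{eq:lm31}), expand the right-hand side via the definition of $P_n$ to obtain
\[
P_m(x)-P_{m-1}(x)=\sum_{k=0}^m\bigl(T_k(m)-T_k(m-1)\bigr)x^k,
\]
where the $k=m$ term is legitimate since $T_m(m-1)=\binom{2m}{2m+1}+\binom{2m-1}{2m+1}=0$. The bracketed difference is $U_k(m)$ by the first stated identity, producing $Q_m(x)$. For the first equation in (\ref{eq:lm31}), the analogous expansion gives
\[
Q_m(x)-Q_{m-1}(x)=\sum_{k=0}^m\bigl(U_k(m)-U_k(m-1)\bigr)x^k=\sum_{k=0}^m T_{k-1}(m-1)\,x^k,
\]
using the second stated identity; the $k=0$ term vanishes (both because $T_{-1}=0$ and because $U_0(n)=2$ is independent of $n$), so after reindexing $j=k-1$ the sum is exactly $xP_{m-1}(x)$.

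The calligraphic pair (\ref{eq:lm32}) is handled by the identical scheme with $T_k(n),U_k(n)$ replaced by $\binom{n+k}{2k},\binom{n+k-1}{2k-1}$; the two underlying combinatorial facts are now nothing more than Pascal's rule applied twice,
\[
\binom{n+k}{2k}-\binom{n+k-1}{2k}=\binom{n+k-1}{2k-1},\qquad \binom{n+k-1}{2k-1}-\binom{n+k-2}{2k-1}=\binom{n+k-2}{2k-2},
\]
with analogous boundary checks (the $k=0$ term in $\mathcal{Q}_n$ vanishes thanks to the convention $\binom{n-1}{-1}=0$, and the top-degree coefficients match trivially).

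As a conceptual alternative I would note, perhaps as a remark, that the four identities follow instantly from the Fibonacci/Lucas identifications in (\ref{eq:s6})--(\ref{eq:s7}) and (\ref{eq:q1}): setting $t=\sqrt{x}$ and applying the standard three-term recurrence $X_{n+1}(t)=tX_n(t)+X_{n-1}(t)$ to consecutive even and odd indices produces each of the four relations at once. This second viewpoint explains why the identities hold rather than merely verifying them, but the coefficient-comparison proof is self-contained and shorter.
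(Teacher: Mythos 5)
Your proof is correct and follows the same route as the paper, which simply asserts that the four identities ``follow by direct manipulation of the binomial forms'' in the definitions of $P_n$, $Q_n$, $\mathcal{P}_n$ and $\mathcal{Q}_n$; your coefficient-by-coefficient comparison, reducing to $T_k(n)-T_k(n-1)=U_k(n)$, $U_k(n)-U_k(n-1)=T_{k-1}(n-1)$ and Pascal's rule, is exactly that manipulation carried out in full, with the boundary cases handled correctly. The closing remark deriving the identities from the Lucas/Fibonacci three-term recurrence is a valid and illuminating alternative, but the main argument already matches the paper's.
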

\begin{corollary}
We have that
\be
P_{m+2}(x)=(x+2)P_{m+1}(x)-P_m(x),
\label{eq:lm321}
\ee
and
\be
P_m(x)=\sum_{j=0}^r (-1)^r\binom{r}{j}(x+2)^{r-j}P_{m-r-j}(x),
\label{eq:lm322}
\ee
where we note that similar relations hold for the polynomials $Q_m(x)$, $\mathcal{P}_m(x)$ and $\mathcal{Q}_m(x)$.
\end{corollary}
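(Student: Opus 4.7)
The first identity \eqref{eq:lm321} follows by eliminating the $Q$-polynomials from Lemma 2.2. Substituting $Q_m(x)=P_m(x)-P_{m-1}(x)$ and the analogous relation for $Q_{m-1}(x)$ into the identity $xP_{m-1}(x)=Q_m(x)-Q_{m-1}(x)$ yields
\[
xP_{m-1}(x)=P_m(x)-2P_{m-1}(x)+P_{m-2}(x),
\]
which rearranges to $P_m(x)=(x+2)P_{m-1}(x)-P_{m-2}(x)$; shifting $m\mapsto m+2$ gives \eqref{eq:lm321}.

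For the iterated identity \eqref{eq:lm322}, the natural approach is induction on $r$, treating $m$ as a free parameter. The base case $r=0$ is trivial and $r=1$ is precisely the three-term recurrence just established. For the step from $r$ to $r+1$, I would apply Pascal's identity $\binom{r+1}{j}=\binom{r}{j}+\binom{r}{j-1}$ to the sum on the right-hand side; pulling a factor of $(x+2)$ out of the $\binom{r}{j}$-piece and reindexing $j\mapsto j+1$ in the $\binom{r}{j-1}$-piece (which produces an overall sign and shifts the $P$-index down by one) leaves two expressions that match the inductive hypothesis with $m$ replaced by $m-1$ and $m-2$ respectively. These collapse to $(x+2)P_{m-1}(x)$ and $-P_{m-2}(x)$, whose sum is $P_m(x)$ by part one.

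The only real bookkeeping hurdle is tracking the boundary terms where $\binom{r}{-1}$ and $\binom{r}{r+1}$ appear and vanish so that the two index ranges align; this is routine but deserves care when writing out the expansion. As a sanity check one may use the closed form $P_m(x)=L_{2m+1}(\sqrt{x})/\sqrt{x}$ from \eqref{eq:s6} and write $L_{2m+1}(\sqrt{x})=\alpha^{2m+1}+\beta^{2m+1}$ where $\alpha,\beta$ are the roots of $t^2-\sqrt{x}\,t-1$; setting $A=\alpha^2,\,B=\beta^2$ gives $AB=1$ and $A+B=x+2$, and the binomial theorem combined with $(A+B)-A^{-1}=A,\,(A+B)-B^{-1}=B$ collapses the sum to $\alpha A^m+\beta B^m$, recovering $\sqrt{x}\,P_m(x)$ directly. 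Since the pair of recurrences in Lemma 2.2 hold verbatim for $Q_m$, $\mathcal{P}_m$ and $\mathcal{Q}_m$, the identical argument transfers and yields the analogous relations asserted in the remark.
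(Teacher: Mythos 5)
Your proof is correct and follows the route the paper intends: the paper's own proof merely asserts that $(\ref{eq:lm321})$ and $(\ref{eq:lm322})$ are ``deduced'' from Lemma 2.2, and your elimination of the $Q$-polynomials to obtain the three-term recurrence, followed by induction on $r$ via Pascal's identity (with the closed-form check through $L_{2m+1}(\sqrt{x})$ as a bonus), is exactly that deduction carried out in full. One point worth flagging: as printed, $(\ref{eq:lm322})$ carries the sign $(-1)^r$, which already fails at $r=1$ against the recurrence $(\ref{eq:lm321})$; what your induction actually proves is the identity with $(-1)^j$, which is the correct version and is how the same formula appears in the paper's later ``Polynomial binomial lemma''.
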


\begin{proof}
The relations $(\ref{eq:lm31})$ and $(\ref{eq:lm32})$ follow by direct manipulation of the binomial forms given in $(\ref{eq:s2})$, $(\ref{eq:s21})$, $(\ref{eq:s20})$, and $(\ref{eq:s210})$, from which we deduce $(\ref{eq:lm321})$ and $(\ref{eq:lm322})$.

\end{proof}

We now introduce some notation for falling and rising factorials.

\begin{defn}
For integers $r,k,$ with $k\geq 0$ let the rising factorial, falling factorial and hypergeometric functions be respectively defined in the usual manner such that
\[
r^{\overline{k}}=r(r+1)\ldots (r+k-1),\qquad r^{\underline{k}}=r(r-1)\ldots (r-k+1),
\]
and
\[
{}_mF_n\left ( \begin{array}{c|}a_1,\ldots,a_m\\
b_1,\ldots, b_n\end{array} \,\,z\right )=\sum_{k\geq 0} t_k,\qquad \mbox{where}\qquad
t_k=\frac{a_1^{\overline{k}}\ldots a_m^{\overline{k}}z^k}{b_1^{\overline{k}}\ldots b_m^{\overline{k}}k!},
\]
with none of the $b_i$ zero or a negative integer (to avoid division by zero).  % possible insert:
%We note that $r^{\overline{k}}$ is the same as the Pochhammer symbol $(r)_k$.

Then for $j,k,r,m,n\in\mathbb{Z}$, with $k,m,n\geq 0$, we have the identities
\be
r^{\underline{k}}\left (r-\frac{1}{2}\right )^{\underline{k}}
=\frac{(2r)^{\underline{2k}}}{2^{2k}},\qquad
\binom{r}{j}=(-1)^j\binom{j-r-1}{j},
\label{eq:b3}
\ee
\be
\binom{r-1/2}{r}=\left.\binom{2r}{r}\right /2^{2r},\qquad
(-1)^m\binom{-n-1}{m}=(-1)^n\binom{-m-1}{n}.
\label{eq:b4}
\ee
\end{defn}

\begin{lemma} Let
\[
\Sigma^0 C^{n P}_{m\,n}= n P_{m}(n),\qquad
\Sigma^{r+1} C^{n P}_{m\,n}=\Sigma^{r} C^{ P}_{m\,1}+\Sigma^{r} C^{2 P}_{m\,2}+\ldots + \Sigma^{r} C^{n P}_{m\,n},
\]
with a similar definition for $\Sigma^{r} C^{n Q}_{m\,n}$, so that by Lemma 2.2
\be
\Sigma^r C^{n P}_{m\,n}=\Sigma^r C^{Q}_{(m+1)\,n}+\Sigma^r C^{Q}_{m\,n},\qquad
\Sigma^r C^{ Q}_{m\,n}=\Sigma^r C^{P}_{m\,n}+\Sigma^r C^{P}_{(m-1)\,n}.
\label{eq:lm331}
\ee
Then there exist the \emph{inner polynomials} $p_m^{(r)}(x)$, $q_m^{(r)}(x)$, $u_m^{(r)}(x)$ and $v_m^{(r)}(x)$, each of degree $m$, with rational coefficients such that
\be
\Sigma^r C^{P}_{m\,n}=\binom{n+r-1}{r}\,p_m^{(r)}(n),\qquad \Sigma^r C^{Q}_{m\,n}=\binom{n+r-1}{r}\,q_m^{(r)}(n),
\label{eq:lm332}
\ee
\be
\Sigma^r C^{n P}_{m\,n}=\binom{n+r}{r+1}\,u_m^{(r)}(n),\qquad \Sigma^r C^{n Q}_{m\,n}=\binom{n+r}{r+1}\,v_m^{(r)}(n),
\label{eq:lm333}
\ee
with $p_m^{(r)}(x)$ and $q_m^{(r)}(x)$ both having leading term coefficient $\binom{m+r}{r}^{-1}$.
\end{lemma}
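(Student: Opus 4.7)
The plan is to prove all four existence claims by a parallel induction on $r$, using the elementary fact that a polynomial in $n$ is divisible by $\binom{n+r-1}{r}=n(n+1)\cdots(n+r-1)/r!$ precisely when it vanishes at $n=0,-1,\ldots,-(r-1)$, and similarly that $\binom{n+r}{r+1}$ corresponds to vanishing at $n=0,-1,\ldots,-r$. The task therefore reduces to verifying these vanishing conditions together with the correct degree and leading coefficient.

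First I would complete the base case by setting $\Sigma^0 C^P_{m,n}=P_m(n)$ and $\Sigma^0 C^Q_{m,n}=Q_m(n)$, parallel to the given $\Sigma^0 C^{nP}_{m,n}=n P_m(n)$. For $r=0$ the $P$- and $Q$-claims impose no vanishing and immediately yield $p_m^{(0)}=P_m$, $q_m^{(0)}=Q_m$ of degree $m$ with leading coefficients $T_m(m)=U_m(m)=1=\binom{m}{0}^{-1}$; for $nP$ and $nQ$ the factor $n=\binom{n}{1}$ already divides $nP_m(n)$ and $nQ_m(n)$, giving $u_m^{(0)}=P_m$, $v_m^{(0)}=Q_m$. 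For the inductive step, both sides of the defining recurrence are polynomials in $n$ that agree on the positive integers, so $\Sigma^{r+1}(n)-\Sigma^{r+1}(n-1)=\Sigma^r(n)$ holds as a polynomial identity. Combined with the empty-sum initial value $\Sigma^{r+1}(0)=0$, this gives the backward recursion $\Sigma^{r+1}(-k)=\Sigma^{r+1}(-k+1)-\Sigma^r(-k+1)$, and under the inductive hypothesis that $\Sigma^r$ already vanishes at the negative integers demanded by its binomial factor, each $\Sigma^{r+1}(-k)$ is forced to zero for exactly one additional $k$. The factor theorem then supplies the required divisor at level $r+1$.

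The degree and leading coefficient follow from observing that $P_m$ and $Q_m$ have leading coefficient $T_m(m)=U_m(m)=1$, that $r$-fold summation raises the degree of a polynomial by $r$, and that it multiplies the leading coefficient by $\prod_{i=1}^{r} 1/(m+i)=m!/(m+r)!$. Dividing by $\binom{n+r-1}{r}$, whose leading coefficient is $1/r!$, therefore leaves a polynomial of degree $m$ with leading coefficient $m!\,r!/(m+r)!=\binom{m+r}{r}^{-1}$, and the analogous count works in the $nP$ and $nQ$ cases. I expect the main obstacle to be essentially bookkeeping rather than mathematical depth, since the induction is identical across the four families; the auxiliary identities $(\ref{eq:lm331})$, which follow from Lemma 2.2 by summation, are not needed for existence but furnish convenient consistency checks between the four inner polynomials once they are constructed.
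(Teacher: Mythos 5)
Your proposal is correct, but it reaches the key divisibility statement by a genuinely different mechanism than the paper. The paper's proof writes $\binom{t+r-1}{r}\,p_m^{(r)}(t)=\frac{1}{r!}\,t^{\overline{r}}\,p_m^{(r)}(t)$, expands this in the rising-factorial basis $\{t^{\overline{k}}\}_{k\geq r}$, and sums term by term using $\sum_{t=1}^n t^{\overline{k}}=n^{\overline{k+1}}/(k+1)$; since every resulting $n^{\overline{k+1}}$ with $k\geq r$ contains the factor $n^{\overline{r+1}}=(r+1)!\binom{n+r}{r+1}$, the binomial divisor is exhibited constructively (this is the ``Knuth's technique'' the authors cite), and the degree count then pins down $\deg p_m^{(r+1)}=m$. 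You instead prove divisibility by the factor theorem: the polynomial identity $\Sigma^{r+1}(n)-\Sigma^{r+1}(n-1)=\Sigma^{r}(n)$ together with $\Sigma^{r+1}(0)=0$ forces, by downward recursion and the inductive hypothesis that $\Sigma^{r}$ vanishes at $0,-1,\ldots,-(r-1)$, the vanishing of $\Sigma^{r+1}$ at $0,-1,\ldots,-r$, which is exactly the root set of $\binom{n+r}{r+1}$. Both arguments are sound; I checked that your backward recursion does produce all $r+1$ required roots at level $r+1$ and that your leading-coefficient computation $m!\,r!/(m+r)!=\binom{m+r}{r}^{-1}$ (starting from $T_m(m)=U_m(m)=1$) is right, and indeed more explicit than the paper's one-line assertion that the normalisation ``also follows by this inductive approach.'' What each buys: the paper's basis-change argument is constructive and meshes with the surrounding Stirling/central-factorial machinery (it effectively computes the coefficients of $p_m^{(r+1)}$ in a factorial basis), whereas your root-counting argument is more elementary and self-contained, needing only that an $n$-fold sum of a polynomial is a polynomial and the factor theorem. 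The one point worth making explicit in a write-up is the justification that $\Sigma^{r+1}C^{P}_{m\,n}$ is a polynomial in $n$ at all (so that agreement on positive integers upgrades to a polynomial identity); you use this tacitly, as does the paper via Faulhaber's formula.
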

\begin{proof}
We give the proof for $p_m^{(r)}(x)$. When $r=0$, we have $p_m^{(0)}(x)=P_m(x)$, and applying Knuth's technique \cite{knuth} which relies on
the identity
\[
\sum_{t=1}^n t^{\overline{k}}=\frac{n^{\overline{k+1}}}{k+1},
\]
(proven later in a slightly different form in Lemma 4.1) we deduce that
\[
\sum_{t=1}^n\Sigma^r C^{P}_{m\,t}=\sum_{t=1}^n \binom{t+r-1}{r}\,p_m^{(r)}(t)=\binom{n+r}{r+1}\,p_m^{(r+1)}(n),
\]
for some polynomial $p_m^{(r+1)}(n)$. By our inductive assumption, $\binom{t+r-1}{r}\,p_m^{(r)}(t)$ is a polynomial of degree $m+r$ in $t$, and so by Faulhaber's formula, the sum from $1,2,\ldots n$ is a polynomial of degree $m+r+1$ in $n$. Now $\binom{n+r}{r+1}$ is a polynomial of degree $r+1$ in $n$ and so we must have that $p_m^{(r+1)}(n)$ is a polynomial of degree $m$ in $n$ for all integers $r\geq 0$.

The leading term normalisation coefficient $\binom{m+r}{r}$ also follows by this inductive approach and the proofs for $q_m^{(r)}(x)$, $u_m^{(r)}(x)$ and $v_m^{(r)}(x)$ are similar.
\end{proof}
A brief remark regarding the notation for our $r$-fold sum coefficients is that we could also write
\[
\Sigma^r C^{P}_{m\,n}=\Sigma^r P_m(n),\qquad \Sigma^r C^{n P}_{m\,n}=\Sigma^r n P_m(n),
\]
and similarly for the $Q$-based coefficients.
\begin{lemma}
With the polynomial definitions of the previous lemma, we have the inner polynomial recurrences
\be
q_m^{(r)}(x)=p_m^{(r)}(x)-p_{m-1}^{(r)}(x)
\label{eq:lm341}
\ee
\be
{\textstyle{\frac{n+r}{r+1}}}u_m^{(r)}(x)=q_{m+1}^{(r)}(x)-q_m^{(r)}(x)=p_{m+1}^{(r)}(x)-2 p_{m}^{(r)}(x)+p_{m-1}^{(r)}(x),
\label{eq:lm342}
\ee
and
\be
{\textstyle{\frac{n+r}{r+1}}}v_m^{(r)}(x)=q_{m+1}^{(r)}(x)-2 q_{m}^{(r)}(x)+q_{m-1}^{(r)}(x)
=p_{m+2}^{(r)}(x)-3 p_{m+1}^{(r)}(x)+3 p_{m}^{(r)}(x)-p_{m-1}^{(r)}(x).
\label{eq:lm343}
\ee

\end{lemma}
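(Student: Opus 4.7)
The plan is to obtain each of the three identities by promoting a polynomial identity among the $P_m(x)$ and $Q_m(x)$ to an identity among their $r$-fold summations, then applying Lemma 2.3 to strip off the appropriate binomial-coefficient prefactor. Throughout, I would use linearity of $\Sigma^r$ and the polynomial identity theorem to extend equalities verified at every positive integer $n$ to equalities as polynomials in $x$.

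For $(\ref{eq:lm341})$, I would take the second part of Lemma 2.2, namely $Q_m(x)=P_m(x)-P_{m-1}(x)$, apply $\Sigma^r$ term-by-term, and invoke Lemma 2.3 to write both sides as $\binom{n+r-1}{r}$ times a degree-$m$ polynomial. Cancelling the common factor yields the claimed recurrence $q_m^{(r)}=p_m^{(r)}-p_{m-1}^{(r)}$.

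For $(\ref{eq:lm342})$, I would start from the first part of Lemma 2.2 in its upshifted form $xP_m(x)=Q_{m+1}(x)-Q_m(x)$. Summing $r$ times and invoking Lemma 2.3 gives
\be
\binom{n+r}{r+1}u_m^{(r)}(n)=\binom{n+r-1}{r}\bigl(q_{m+1}^{(r)}(n)-q_m^{(r)}(n)\bigr).
\ee
The key computation is the binomial ratio $\binom{n+r}{r+1}/\binom{n+r-1}{r}=(n+r)/(r+1)$, which rearranges to the first equality of $(\ref{eq:lm342})$. The second equality then comes by substituting $(\ref{eq:lm341})$ into both $q_{m+1}^{(r)}$ and $q_m^{(r)}$ and collecting the resulting $p$-terms with coefficients $\{1,-2,1\}$.

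For $(\ref{eq:lm343})$, the analogous starting identity is $xQ_m(x)=Q_{m+1}(x)-2Q_m(x)+Q_{m-1}(x)$, which I would derive in one line by combining the two parts of Lemma 2.2 as $xQ_m=xP_m-xP_{m-1}=(Q_{m+1}-Q_m)-(Q_m-Q_{m-1})$. Summing $r$ times, applying Lemma 2.3, and using the same binomial ratio $(n+r)/(r+1)$ produces the first equality; the second equality is obtained by inserting $(\ref{eq:lm341})$ into each $q$-term and collecting the coefficients $\{1,-3,3,-1\}$. The argument is essentially bookkeeping, with no combinatorial input beyond Lemma 2.2; the only care points are the index shift when passing between $xP_{m-1}=Q_m-Q_{m-1}$ and $xP_m=Q_{m+1}-Q_m$, and confirming that exactly the same ratio $(n+r)/(r+1)$ emerges in both $(\ref{eq:lm342})$ and $(\ref{eq:lm343})$, so the prefactor in the statement is consistent across the two cases.
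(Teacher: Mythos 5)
Your proposal is correct and follows essentially the same route as the paper: the authors likewise substitute the factorisations of Lemma 2.3 into the recurrences of Lemma 2.2 and divide through by $\binom{n+r-1}{r}$, with the ratio $\binom{n+r}{r+1}/\binom{n+r-1}{r}=(n+r)/(r+1)$ accounting for the prefactor in the last two identities. Your write-up simply makes explicit the bookkeeping (linearity of $\Sigma^r$, the telescoping substitution of the first identity into the later ones) that the paper leaves implicit.
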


\begin{proof}
Substituting $(\ref{eq:lm332})$ and $(\ref{eq:lm333})$ of Lemma 2.3 in to $(\ref{eq:lm31})$ and $(\ref{eq:lm32})$ of Lemma 2.2 and then dividing through by $\binom{n+r-1}{r}$, we obtain
the above recurrence relations.
\end{proof}

\section{Polynomial Sums and Differences of $d$-th Powers}
With our families of polynomials established, we can now state some of their relations to the sum and difference of two $d$-th powers.

\begin{lemma}
Following the notation of T. Koshy (see \cite{koshy} ch 13), Let
\[
r=\frac{y+\sqrt{y^2+4x}}{2},\qquad s=\frac{y-\sqrt{y^2+4x}}{2},\qquad \Delta=\sqrt{y^2+4x},
\]
so that $r+s=y$, $-rs=x$, $2r=y+\Delta$ and $2s=y-\Delta$. Then for sums of powers of $r$ and $s$ we have that
\be
r^{2m+1}+s^{2m+1}=x^m y P_m\left (\frac{y^2}{x}\right )=y^{2m+1}P^{\rm Inv}_m\left (\frac{x}{y^2}\right ),
\label{eq:t101}
\ee
\be
r^{2m}+s^{2m}=x^m Q_m\left (\frac{y^2}{x}\right )=y^{2m}Q^{\rm Inv}_m\left (\frac{x}{y^2}\right ),
\label{eq:t102}
\ee

the differences of powers of $r$ and $s$ are given by
\be
r^{2m+1}-s^{2m+1}=\Delta\, x^m  \mathcal{P}_m\left (\frac{y^2}{x}\right )=\Delta\,
y^{2m}\,\mathcal{P}^{\rm Inv}_m\left (\frac{x}{y^2}\right ),
\label{eq:t103}
\ee
\be
r^{2m}-s^{2m}=\Delta \,\frac{x^{m}}{y} \mathcal{Q}_{m}\left (\frac{y^2}{x}\right )=\Delta\,y^{2m-1}\mathcal{Q}^{\rm Inv}_{m}\left (\frac{x}{y^2}\right ).
\label{eq:t104}
\ee
\end{lemma}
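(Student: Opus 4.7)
The plan is to reduce everything to the Binet-style representations for Fibonacci and Lucas polynomials, then translate via the dictionary $(\ref{eq:s6})$--$(\ref{eq:s8})$ and $(\ref{eq:q1})$--$(\ref{eq:q101})$. Since $r+s=y$ and $-rs=x$, the numbers $r,s$ are roots of the monic quadratic $z^{2}-yz-x=0$. Rescaling by $\sqrt{x}$, set $r=\sqrt{x}\,\tilde r$ and $s=\sqrt{x}\,\tilde s$; then $\tilde r,\tilde s$ are the roots of the normalised Fibonacci/Lucas quadratic $z^{2}-(y/\sqrt{x})z-1=0$. The classical Binet formulas therefore give
\[
\tilde r^{n}+\tilde s^{n}=L_{n}\!\left(\tfrac{y}{\sqrt{x}}\right),\qquad
\tilde r^{n}-\tilde s^{n}=(\tilde r-\tilde s)\,F_{n}\!\left(\tfrac{y}{\sqrt{x}}\right),
\]
with $\tilde r-\tilde s=\sqrt{y^{2}/x+4}=\Delta/\sqrt{x}$. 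Undoing the scaling,
\[
r^{n}+s^{n}=x^{n/2}L_{n}\!\left(\tfrac{y}{\sqrt{x}}\right),\qquad
r^{n}-s^{n}=x^{(n-1)/2}\,\Delta\,F_{n}\!\left(\tfrac{y}{\sqrt{x}}\right).
\]

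Next I would specialise to $n=2m$ and $n=2m+1$ and substitute. Setting $w=y^{2}/x$ so that $\sqrt{w}=y/\sqrt{x}$, the identity $Q_{m}(w)=L_{2m}(\sqrt{w})$ of $(\ref{eq:s7})$ gives $r^{2m}+s^{2m}=x^{m}Q_{m}(y^{2}/x)$; the identity $P_{m}(w)=L_{2m+1}(\sqrt{w})/\sqrt{w}$ of $(\ref{eq:s6})$ gives $L_{2m+1}(y/\sqrt{x})=(y/\sqrt{x})P_{m}(y^{2}/x)$, hence $r^{2m+1}+s^{2m+1}=x^{m+1/2}\cdot(y/\sqrt{x})P_{m}(y^{2}/x)=x^{m}yP_{m}(y^{2}/x)$. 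For the differences, $\mathcal P_{m}(w)=F_{2m+1}(\sqrt{w})$ from $(\ref{eq:q1})$ yields $r^{2m+1}-s^{2m+1}=\Delta\,x^{m}\mathcal P_{m}(y^{2}/x)$, while $\mathcal Q_{m}(w)=\sqrt{w}\,F_{2m}(\sqrt{w})$ produces $F_{2m}(y/\sqrt{x})=(\sqrt{x}/y)\mathcal Q_{m}(y^{2}/x)$, so $r^{2m}-s^{2m}=x^{m-1/2}\Delta\cdot(\sqrt{x}/y)\mathcal Q_{m}(y^{2}/x)=\Delta(x^{m}/y)\mathcal Q_{m}(y^{2}/x)$. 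This establishes all four left-hand equalities $(\ref{eq:t101})$--$(\ref{eq:t104})$.

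For the second equality in each line (the $\mathrm{Inv}$ form), I would just invoke the reciprocal definitions $P_{m}^{\mathrm{Inv}}(t)=t^{m}P_{m}(1/t)$, and analogously for the others. Taking $t=x/y^{2}$ so that $1/t=y^{2}/x$, one finds $P_{m}(y^{2}/x)=(y^{2}/x)^{m}P_{m}^{\mathrm{Inv}}(x/y^{2})$, and inserting this into $x^{m}yP_{m}(y^{2}/x)$ collapses to $y^{2m+1}P_{m}^{\mathrm{Inv}}(x/y^{2})$. The same bookkeeping works for $Q_{m}^{\mathrm{Inv}}$, $\mathcal P_{m}^{\mathrm{Inv}}$ and $\mathcal Q_{m}^{\mathrm{Inv}}$, with the $\Delta$ and the lone factor of $y$ in the $\mathcal Q$ case absorbed correctly.

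The only delicate point is that $\sqrt{x}$ is formally irrational, but each final identity equates two polynomials in $x$ and $y$ (the half-integer powers of $x$ cancel in pairs), so the identity is a genuine equality in $\mathbb{Q}[x,y,\Delta]/(\Delta^{2}-y^{2}-4x)$; this is the only subtlety to record, and it disappears once one checks that each side has the same parity in $\sqrt{x}$. Everything else is routine substitution, so I do not anticipate any genuine obstacle beyond careful bookkeeping of the exponents of $x$ and of the symbol $\Delta$.
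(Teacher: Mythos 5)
Your proof is correct and follows essentially the same route as the paper: the paper's proof simply cites Koshy (Ch.\ 13) for the sum-and-difference-of-powers formulas (the generalised Binet identities you rederive) and combines them with the dictionary $(\ref{eq:s6})$--$(\ref{eq:s8})$ and $(\ref{eq:q1})$--$(\ref{eq:q101})$, exactly as you do. The only difference is that you make the Binet step and the exponent bookkeeping explicit where the paper delegates them to the reference.
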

\begin{corollary}
\be
Q_n(x)^2-Q_{2n}(x)=2,\qquad Q_n(x)^2+Q_m(x)^2-Q_{n+m}(x)Q_{|n-m|}(x)=4,
\label{eq:lm33}
\ee
so that (suppressing the $x$ argument) we have
\be
(P_n-P_{n-1})^2+(P_m-P_{m-1})^2-(P_{n+m}-P_{n+m-1})(P_{|n-m|}-P_{|n-m|-1})=4,
\label{eq:lm34}
\ee
and for the $n=m$ case, with $n>0$, this yields
\be
2\left (P_n(x)-P_{n-1}(x)\right )^2-(P_{2n}(x)-P_{2n-1}(x))=Q_{2n}(x)+4.
\label{eq:lm35}
\ee
Similarly we find that for $n>m$
\be
x\left (P_n(x)^2+P_{m}(x)^2\right )=Q_{2n+1}(x)+Q_{2m+1}(x)-4,
\label{eq:lm36}
\ee
\be
x\left( P_{n+m}(x)P_{n-m}(x)\right )=Q_{2n+1}(x)- Q_{2m}(x),
\label{eq:lm37}
\ee
so that
\be
x\left (P_n(x)^2+P_{m}(x)^2-P_{n+m}(x)P_{n-m}(x)\right )=P_{2m+1}(x)-P_{2m-1}(x)-4.
\label{eq:lm38}
\ee
\end{corollary}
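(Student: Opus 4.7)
The plan is to translate every identity in the Corollary into a symmetric functional identity in $r$ and $s$ via Lemma 3.1, where each claim reduces to elementary algebra on $r^k\pm s^k$ using only the facts $r+s=y$ and $rs=-x$. Writing $\xi=y^2/x$, Lemma 3.1 yields the dictionary $Q_k(\xi)=(r^{2k}+s^{2k})/x^k$ and $P_k(\xi)=(r^{2k+1}+s^{2k+1})/(x^k y)$. As both sides of each claim are polynomials in the single variable $\xi$ and the pair $(x,y)$ can be chosen freely to realise any value of $\xi$, an identity that holds in the $r,s$ picture for all admissible $x,y$ is automatically a polynomial identity in $\xi$, which we may then rename $x$.

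First, I would handle $(\ref{eq:lm33})$. Squaring gives $Q_n(\xi)^2=(r^{4n}+s^{4n})/x^{2n}+2(rs)^{2n}/x^{2n}$, and since $(rs)^{2n}=(-x)^{2n}=x^{2n}$, this collapses to $Q_n^2=Q_{2n}+2$, which is the first claim. For the product identity, WLOG $n\geq m$, expand $Q_{n+m}Q_{n-m}$ and group the two pure terms $r^{4n}+s^{4n}$ with the two cross terms, which factor as $(rs)^{2(n-m)}(r^{4m}+s^{4m})=x^{2(n-m)}(r^{4m}+s^{4m})$; dividing by $x^{2n}$ gives $Q_{n+m}Q_{|n-m|}=Q_{2n}+Q_{2m}$. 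Combining with $Q_k^2=Q_{2k}+2$ applied at $k=n,m$ yields $Q_n^2+Q_m^2-Q_{n+m}Q_{|n-m|}=4$. Identity $(\ref{eq:lm34})$ is then $(\ref{eq:lm33})$ rewritten through $Q_k=P_k-P_{k-1}$ from Lemma 2.2, and $(\ref{eq:lm35})$ is the specialisation $n=m$, where the convention $Q_0=2$ (equivalently $P_0-P_{-1}=2$) turns $2Q_n^2-2Q_{2n}=4$ into the stated $2Q_n^2-Q_{2n}=Q_{2n}+4$.

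For the $P$-identities the same method applies but with a sign change from $(rs)^{\mathrm{odd}}=-x^{\mathrm{odd}}$. The key computation is
\[
\xi P_n(\xi)^2=\frac{y^2}{x}\cdot\frac{(r^{2n+1}+s^{2n+1})^2}{x^{2n}y^2}=\frac{r^{4n+2}+s^{4n+2}}{x^{2n+1}}+\frac{2(rs)^{2n+1}}{x^{2n+1}}=Q_{2n+1}(\xi)-2,
\]
from which $(\ref{eq:lm36})$ follows by adding the analogous identity at $m$. For $(\ref{eq:lm37})$ one expands $\xi P_{n+m}P_{n-m}$ exactly as in the $Q$-product case; the cross terms now contribute $(rs)^{2(n-m)+1}(r^{4m}+s^{4m})=-x^{2(n-m)+1}(r^{4m}+s^{4m})$, giving $\xi P_{n+m}P_{n-m}=Q_{2n+1}-Q_{2m}$. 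Subtracting $(\ref{eq:lm37})$ from $(\ref{eq:lm36})$ leaves $Q_{2m+1}+Q_{2m}-4$, and the telescope $Q_{2m+1}+Q_{2m}=(P_{2m+1}-P_{2m})+(P_{2m}-P_{2m-1})=P_{2m+1}-P_{2m-1}$ completes $(\ref{eq:lm38})$.

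I do not expect any genuine obstacle. The only bookkeeping care required is the sign split between even and odd powers of $rs$, which is what distinguishes the $Q$-products (clean) from the $P$-products (minus sign), and the handling of boundary cases $|n-m|=0$ and $P_{-1}$, which are controlled by the well-definedness of $Q_0=2$ recorded after Definition of $P_n$, $Q_n$ and their inverses. Everything else is formal manipulation of two-term sums.
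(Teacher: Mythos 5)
Your proposal is correct and follows essentially the same route as the paper: both translate each identity through Lemma 3.1 into elementary algebra on $r^{k}\pm s^{k}$ using $rs=-x$, the paper merely specialising to $x=1$ (so $rs=-1$) where you keep $x$ general and divide by powers of $x$. The only added value in your write-up is that you carry out explicitly the $P$-identities \eqref{eq:lm36}--\eqref{eq:lm38} (including the sign from $(rs)^{\mathrm{odd}}=-x^{\mathrm{odd}}$ and the telescoping $Q_{2m+1}+Q_{2m}=P_{2m+1}-P_{2m-1}$), which the paper dispatches with ``follow in a similar fashion.''
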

\begin{proof}
The relations $(\ref{eq:t101})$, $(\ref{eq:t102})$, $(\ref{eq:t103})$ and $(\ref{eq:t104})$, follow by combining the Fibonacci and Lucas polynomials expressions for $P_m(x)$, $Q_m(x)$, $\mathcal{P}_m(x)$, and $\mathcal{Q}_m(x)$, given previously in this section, with those for sums and differences of two $d$-powers derived in \cite{koshy}, Chapter 13.

For the Corollary, the first display in $(\ref{eq:lm33})$ is obtained by
taking $x=1$ in $(\ref{eq:t101})$, so that $rs=-1$, giving
\[
\left (Q_n(y^2)\right )^2-Q_{2n}(y^2)=\left (r^{2n}+s^{2n}\right )^2- \left (r^{4n}+s^{4n}\right )=(2r s)^{2n}=2,
\]
as required.

For $n>m>0$ and again taking $x=1$ in $(\ref{eq:t101})$, the left hand side of the second display in $(\ref{eq:lm33})$ simplifies to
\[
\left (Q_n(y^2)\right )^2+\left (Q_m(y^2)\right )^2-Q_{n+m}(y^2)Q_{n-m}(y^2)=
4+r^{4m}+s^{4m}-\left (\frac{r^{4m}+s^{4m}}{(rs)^{2m}}\right )=4.
\]
The case $m>n>0$ can be treated likewise, and hence the result.

The proofs for the relations $(\ref{eq:lm36})$, $(\ref{eq:lm37})$ and $(\ref{eq:lm38})$ follow in a similar fashion, where we note that
corresponding identities exist for the polynomials $\mathcal{P}_{n}(x)$ and $\mathcal{Q}_{n}(x)$.
\end{proof}

\begin{thm2}
Setting $x=(z^2-y^2)/4$ in Lemma 3.1, with $y$ and $z$ either both odd or both even positive integers, we have
\[
r=\frac{y+z}{2},\qquad s=\frac{y-z}{2},\qquad \Delta=z,
\]
so that $r$, $s$ and $\Delta$ each take integer values, and the expressions in $(\ref{eq:t101})$, $(\ref{eq:t102})$, $(\ref{eq:t103})$ and $(\ref{eq:t104})$,
all involve integer powers.

Furthermore, every integer value of $x$ can be attained. For odd $x=n$, we can take $y=n-1$ and $z=n+1$, so that $\Delta =n+1$, $r=n$ and $s=-1$, giving
\be
n^{2m+1}-1=\left (n-1\right )^{2m+1}P_m^{\text{\rm inv}}\left( \frac{n}{(n-1)^2}\right ),
\label{eq:tm11}
\ee
\be
n^{2m+1}+1=(n+1)\left (n-1\right )^{2m}\mathcal{P}_m^{\text{\rm inv}}\left( \frac{n}{(n-1)^2}\right ),
\label{eq:tm12}
\ee
\be
n^{2m}-1=(n+1)\left (n-1\right )^{2m-1}\mathcal{Q}_{m}^{\text{\rm inv}}\left( \frac{n}{(n-1)^2}\right ),
\label{eq:tm14}
\ee
\be
n^{2m}+1=\left (n-1\right )^{2m}Q_m^{\text{\rm inv}}\left( \frac{n}{(n-1)^2}\right ).
\label{eq:tm13}
\ee
Other integer producing values are $x=n(n+t)$ and $y=t$, yielding $r=n+t$ and $s=-n$.
\end{thm2}

\begin{proof}
If $y$ and $z$ are either both odd or both even then $x=(z^2-y^2)/4$ is an integer as squares of integers are either 0 or 1 modulo 4, depending on their parity. The eight identities then follow directly from Lemma 3.1.
\end{proof}
The polynomial structures considered here are fundamentally connected with hypergeometric functions involving binomial coefficients, and we now outline some of the binomial relations (not stated in \cite{concrete}) underpinning later results.

\begin{lemma}For each natural number $m$ we have
\be
(2m+1)\sum_{r=0}^k\frac{1}{2m+1-2(k+r)} \binom{2m-k-r}{k+r}\binom{k+r}{k-r}=\binom{2m+1}{2k},
\label{eq:b1}
\ee
\be
(2m+1)\sum_{r=0}^{k-1}\frac{1}{2m+1-2(k+r)} \binom{2m-k-r}{k+r}\binom{k+r}{k-r-1}=\binom{2m+1}{2k-1},
\label{eq:b2}
\ee
\be
(2m)\sum_{r=0}^k\frac{1}{2m-2(k+r)} \binom{2m-k-r-1}{k+r}\binom{k+r}{k-r}=\binom{2m}{2k},
\label{eq:be1}
\ee
and
\be
(2m)\sum_{r=0}^{k-1}\frac{1}{2m-2(k+r)} \binom{2m-k-r-1}{k+r}\binom{k+r}{k-r-1}=\binom{2m}{2k-1}.
\label{eq:be2}
\ee
\end{lemma}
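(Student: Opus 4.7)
My plan is to reduce all four identities to the trivial coefficient extractions $[y^n]\bigl((1+y)^N\pm y^N\bigr)=\binom{N}{n}$ via a single generating-function manipulation. Substituting $s=k+r$ and performing a routine binomial simplification, each left-hand side takes the common form $\sum_s c_s^{(N)}\binom{s}{n-s}$ where $c_s^{(N)}=\tfrac{N}{N-s}\binom{N-s}{s}$ is exactly the coefficient of $x^{N-2s}$ in the Lucas polynomial $L_N(x)$ of $(\ref{eq:176})$; here $N\in\{2m+1,2m\}$ and $n\in\{2k,2k-1\}$, depending on which of $(\ref{eq:b1})$--$(\ref{eq:be2})$ is being considered. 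The binomial equality $\tfrac{1}{N-2s}\binom{N-s-1}{s}=\tfrac{1}{N-s}\binom{N-s}{s}$ is what converts the written summand into the Lucas coefficient.

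The identity $\binom{s}{n-s}=[y^{n-s}](1+y)^s$ then yields the standard coefficient-extraction lemma $\sum_s c_s\binom{s}{n-s}=[y^n]\,C(y+y^2)$ where $C(x)=\sum_s c_s x^s$. In our case the shape of $L_N$ gives $C_N(x^2)=x^N L_N(1/x)$, so the four left-hand sides all equal $[y^n]\,\bigl(y(1+y)\bigr)^{N/2}L_N\!\bigl(1/\sqrt{y(1+y)}\bigr)$.

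To evaluate this I would use the Binet form $L_N(x)=\alpha^N+\beta^N$ with $\alpha,\beta=(x\pm\sqrt{x^2+4})/2$ at the point $x=1/\sqrt{y(1+y)}$. A short calculation gives $\sqrt{x^2+4}=(2y+1)/\sqrt{y(1+y)}$, hence $\alpha=\sqrt{(1+y)/y}$ and $\beta=-\sqrt{y/(1+y)}$, and the prefactor $\bigl(y(1+y)\bigr)^{N/2}$ clears the square-roots to leave
\[
C_{2m+1}(y+y^2)=(1+y)^{2m+1}-y^{2m+1},\qquad C_{2m}(y+y^2)=(1+y)^{2m}+y^{2m}.
\]
Reading off $[y^{2k}]$ and $[y^{2k-1}]$, both of which are strictly less than $N$ so that the $\pm y^N$ term does not contribute, immediately produces $\binom{2m+1}{2k}$, $\binom{2m+1}{2k-1}$, $\binom{2m}{2k}$, $\binom{2m}{2k-1}$, establishing the four identities simultaneously.

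The main obstacle is recognising the right generating-function repackaging: once one sees that the $\binom{s}{n-s}$ factor signals the substitution $x\mapsto y(1+y)$, and that the weighted inner coefficients are exactly those of $L_N$, the Binet calculation is routine. The only sign issue to watch is the minus in $\beta$, because it is precisely this sign that produces the $-y^{2m+1}$ in the odd case versus the $+y^{2m}$ in the even case, and thereby explains why $(\ref{eq:b1})$--$(\ref{eq:b2})$ pair with $N=2m+1$ while $(\ref{eq:be1})$--$(\ref{eq:be2})$ pair with $N=2m$.
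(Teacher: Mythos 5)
Your proposal is correct, and it takes a genuinely different route from the paper. The paper treats each sum as a terminating Saalschützian ${}_3F_2(1)$: it computes the term ratio, invokes Saalschütz's identity $(\ref{eq:b5})$, and then grinds the resulting falling factorials back into binomial coefficients. You instead observe that after the substitution $s=k+r$ and the elementary rewriting $\tfrac{1}{N-2s}\binom{N-s-1}{s}=\tfrac{1}{N-s}\binom{N-s}{s}$, all four left-hand sides become $\sum_s c_s^{(N)}\binom{s}{n-s}$ with $c_s^{(N)}$ the Lucas coefficients from $(\ref{eq:176})$, and you evaluate this by the coefficient-extraction identity $\sum_s c_s\binom{s}{n-s}=[y^n]C(y+y^2)$ together with the Binet form, which yields $C_{2m+1}(y+y^2)=(1+y)^{2m+1}-y^{2m+1}$ and $C_{2m}(y+y^2)=(1+y)^{2m}+y^{2m}$. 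This is essentially a self-contained, forward-direction version of the observation the authors themselves make in Remark 1 to Theorem 3 (their identity $(\ref{eq:b10})$ is your coefficient comparison in disguise, and your Binet computation re-derives $(\ref{eq:m48})$ in the variable $y$); the authors only present that as a consequence of the lemma, whereas you make it the proof. What your approach buys is a uniform treatment of all four identities at once and a conceptual explanation of the odd/even sign split via the sign of $\beta$; what the paper's approach buys is independence from the Fibonacci--Lucas machinery and a template that applies to other Saalschützian sums. Two small points worth making explicit: the extension of the summation range from $k\le s\le 2k$ (resp. $2k-1$) to $0\le s\le\lfloor N/2\rfloor$ is harmless because $\binom{s}{n-s}$ and $\binom{N-s}{s}$ vanish outside the overlap, and the claim that the $\pm y^N$ term does not contribute needs $n<N$, i.e. the same restriction on $k$ (and the same limit convention at $s=m$ in the even case) that the paper implicitly assumes.
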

\begin{proof} We give the proof for $(\ref{eq:b1})$.

Let
\[
\sum_{k\geq 0} t_k=\sum_{s=0}^k\frac{(2m+1)}{2m+1-2(k+s)} \binom{2m-k-s}{k+s}\binom{k+s}{k-s}.
\]
Then
\[
\frac{t_{k+1}}{t_k}=\frac{(k+s-m-1/2)(k+s-m)(k-s)}{(k+1/2)(k+s-2m)(k+1)}\times 1.
\]
We now recall Saalsch\"utz's identity, which states that for integer $n\geq 0$,
\be
F\left ( \begin{array}{c|}a,\,\,b,\,\,-n\\
c,\,a+b-c-n+1\end{array} \,\,1\right )=\frac{(a-c)^{\underline{n}}(b-c)^{\underline{n}}}{-c^{\underline{n}}(a+b-c)^{\underline{n}}},
\label{eq:b5}
\ee
and taking $a=s-m-1/2$, $b=s-m$, $c=1/2$ and $n=s$ in $(\ref{eq:b5})$ yields
\[
F\left ( \begin{array}{c|}s-m-1/2,\,\,s-m,\,\,-s\\
1/2,\,s-2m\end{array} \,\,1\right )=\frac{(s-m-1)^{\underline{s}}\,(s-m-1/2)^{\underline{s}}}
{\displaystyle{(-1/2)^{\underline{s}}}\,(2s-2m-1)^{\underline{s}}}.
\]
Hence we can read off the series as
\[
\sum_{k\geq 0} t_k=t_0 \,F\left ( \begin{array}{c|}s-m-1/2,\,\,s-m,\,\,-s\\
1/2,\,s-2m\end{array} \,\,1\right )=t_0\,\frac{(s-m-1)^{\underline{s}}\,(s-m-1/2)^{\underline{s}}}
{\displaystyle{(-1/2)^{\underline{s}}}\,(2s-2m-1)^{\underline{s}}},
\]
where
\[
t_0=\frac{(2m+1)}{(2m+1-2s)} \binom{2m-s}{s}.
\]
With the aid of the identities in $(\ref{eq:b3})$ and $(\ref{eq:b4})$ these falling factorial terms can be re-written such that
\[
(s-m-1)^{\underline{s}}=(-1)^s\,s!\,\binom{m}{s},\qquad (s-m-1/2)^{\underline{s}}=\frac{(2s-2m)^{\underline{2s}}}
{\displaystyle{2^{2s}}\,(s-m)^{\underline{s}}},
\]
\[
(s-m)^{\underline{s}}=(-1)^s\,s!\,\binom{m-1}{s},\qquad (2s-2m)^{\underline{2s}}=(2s)!\,\binom{2m-1}{2s},
\]
\[
\left (\frac{-1}{2}\right )^{\underline{s}}=\frac{(-1)^s (2s)!}{\displaystyle{2^{2s} s!}},\qquad
(2s-2m-1)^{\underline{s}}=(-1)^s s!\,\binom{2m-s}{s},
\]
and we have
\[
\sum_{k\geq 0} t_k=\frac{(2m+1)}{(2m+1-2s)} \binom{2m-s}{s}\frac{(s-m-1)^{\underline{s}}\,(s-m-1/2)^{\underline{s}}}
{\displaystyle{(-1/2)^{\underline{s}}}\,(2s-2m-1)^{\underline{s}}}
\]
\[
=\frac{(2m+1)\binom{2m-s}{s}(-1)^s s! \binom{m}{s}(2s)!\binom{2m-1}{2s}2^{2s}s!}
{(2m+1-2s)2^{2s}(-1)^s s!\binom{m-1}{s}(-1)^s(2s)!(-1)^s s!\binom{2m-s}{s}}
\]
\[
=\frac{(2m+1)\binom{m}{s}\binom{2m-1}{2s}}
{(2m+1-2s)\binom{m-1}{s}}
\]
\[
=\frac{(2m+1)m(2m-1)!}{(2m+1-2s)(m-s)(2m-1-2s)!(2s)!}=\binom{2m+1}{2s},
\]
as required.

The proofs for $(\ref{eq:b2})$, $(\ref{eq:be1})$ and $(\ref{eq:be2})$ are almost identical.
\end{proof}

\begin{thm3}For integers $a$ and $m$ with $m\geq 0$ we have
\be
P_m^{\rm Inv}(a(a+1)) = (a+1)^{2m+1}-a^{2m+1},\qquad (2a+1)\mathcal{Q}_m^{\rm Inv}(a(a+1)) = (a+1)^{2m}-a^{2m}
\label{eq:m48}
\ee
\be
(2a+1)\mathcal{P}_m^{\rm Inv}(a(a+1)) = (a+1)^{2m+1}+a^{2m+1},\qquad Q_m^{\rm Inv}(a(a+1)) = (a+1)^{2m}+a^{2m},
\label{eq:m481}
\ee
from which we deduce that for $0<b<a$,
\be
\sum_{j=b}^{a-1}P_m^{\rm Inv}(j(j+1)) = a^{2m+1}-b^{2m+1},\qquad \sum_{j=b}^{a-1}(2j+1)\mathcal{Q}_{m}^{\rm Inv}(j(j+1)) = a^{2m}-b^{2m}
\label{eq:m485}
\ee
\be
 \sum _{j=b}^{a-1} (-1)^{a+j-1}(2j+1)\mathcal{P}^{\rm Inv}_{m}(a(a+1))=a^{2m+1}+(-1)^{a+b-1}b^{2m+1},
\label{eq:m4815}
\ee
and
\be
 \sum _{j=b}^{a-1} (-1)^{a+j-1}Q^{\rm Inv}_m(a(a+1))=a^{2m}+(-1)^{a+b-1}b^{2m},
\label{eq:m4816}
\ee
Using the result that $\lim_{j\rightarrow 0}j^0(j+1)^0=1$ we can extend these results to $b=0$ so that for the expressions in $(\ref{eq:m485})$ we have
\be
\sum_{j=0}^{a-1}P_m^{\rm Inv}(j(j+1)) = a^{2m+1},\qquad \sum_{j=0}^{a-1}(2j+1)\mathcal{Q}_{m}^{\rm Inv}(j(j+1)) = a^{2m}.
\label{eq:m4850}
\ee

\end{thm3}
\begin{corollary1}Let $p=2m+1$ be an odd prime. Then an explicit form for the quotient $q$ in Fermat's ``Little Theorem'' $a^{p-1}=qp+1$, is given by
\be
q=\frac{a^{p-1}-1}{p}=\frac{1}{a}\sum_{k=1}^m \frac{1}{p-2k}\binom{p-k-1}{k}\sum_{j=1}^{a-1}j^k(j+1)^k \in \mathbb{N}
\label{eq:m52}
\ee
\[
=\frac{1}{a(2m+1)}\sum_{j=1}^{a-1}\left (\, _2F_1\left(-m-\frac{1}{2},-m;-2 m;-4 j (j+1)\right)-1\right ).
\]
\end{corollary1}
\begin{corollary2}For $a,b,c$ positive integers with $a\leq b\leq c$ and $m\geq 0$ we have $a^{2m+1} + b^{2m+1}-c^{2m+1}~=~0$ if and only if
\be
\sum_{j=0}^{a-1}P^{\rm Inv}_m(j(j+1))=\sum_{j=b}^{c-1}P^{\rm Inv}_m(j(j+1)).
\label{eq:m54}
\ee
Hence for odd exponents, Fermat's Last Theorem is equivalent to proving that equation $(\ref{eq:m54})$ has no solutions for $m\geq 1$.

Similarly, for integer $m\geq 1$ we have
\be
\sum_{j=0}^{a-1}(2j+1)\mathcal{Q}^{\rm Inv}_m(j(j+1))=\sum_{j=b}^{c-1}(2j+1)\mathcal{Q}^{\rm Inv}_m(j(j+1)),
\label{eq:m541}
\ee
so that for even exponents, Fermat's Last Theorem is equivalent to proving that equation $(\ref{eq:m541})$ has no solutions for $m\geq 2$.

\end{corollary2}
\begin{proof}

Setting $x=a^2+a$ and $y=1$ in Lemma 3.1, gives $\Delta = 2a+1$, $r=a+1$ and $s=-a$, and hence ($\ref{eq:m48}$) and ($\ref{eq:m481}$), from which the other relations of the Theorem can be deduced.

To see the first Corollary we make repeated use of the first identity in ($\ref{eq:m48}$) to obtain $a^{2m+1}=$
\[
1+\left((a+1)^{2m+1}-a^{2m+1}\right )+\left (a^{2m+1}-(a-1)^{2m+1}\right )+\ldots + \left (2^{2m+1}-1^{2m+1}\right )
\]
\be
=1+(2m+1)\sum_{k=0}^m \frac{1}{2(m-k)+1}\binom{2m-k}{k}\sum_{j=1}^{a-1}j^k(j+1)^k,
\label{eq:m50}
\ee
which can be written in the form
\be
a^{2m+1}-a=(2m+1)\sum_{k=1}^m \frac{1}{2(m-k)+1}\binom{2m-k}{k}\sum_{j=1}^{a-1}j^k(j+1)^k.
\label{eq:m51}
\ee
Noting that
\[
\frac{2m+1}{2(m-k)+1}\binom{2m-k}{k}\in \mathbb{N},\,\,\forall\hbox{ } 0\leq k \leq m,
\]
and using (\ref{eq:m51}) we have
\be
\frac{a^{2m}-1}{2m+1}=\frac{1}{a}\sum_{k=1}^m \frac{1}{2(m-k)+1}\binom{2m-k}{k}\sum_{j=1}^{a-1}j^k(j+1)^k,
\label{eq:m521}
\ee
where both sides are integers when $p=2m+1$ is a prime and $(a,p)=1$.

The second Corollary follows directly from ($\ref{eq:m485}$) and ($\ref{eq:m4816}$).
\end{proof}
For example, if $a=6$ and $m=2$ (so $p=2m+1=5$) in $(\ref{eq:m48})$, then
\[
5\sum_{k=0}^2 \binom{4-k}{k}\frac{6^k\times 7^k}{5-2k}
=5\left (\frac{1\times (42)^0}{5}+\frac{3\times(42)^1}{3}+\frac{1\times(42)^2}{1}\right )
=9031 =  7^5-6^5,
\]
and the equality in $(\ref{eq:m52})$ becomes $(6^4-1)/5=1295/5=1554/6=259=q$ so that $6^4=259\times 5 +1 $, or in congruence notation,
$ 6^4\equiv 1\pmod{5}.$

Taking $b=2$ in $(\ref{eq:m485})$ we obtain
\[
P_2^{\rm Inv}(6)+P_2^{\rm Inv}(12)+P_2^{\rm Inv}(20)+P_2^{\rm Inv}(30)=211+781+2101+4651=7433=6^5-2^5.
\]

\begin{rmk1}[to Theorem 3]
We note that in Theorem 3 a reciprocity law exists for negative values of $a$. If $a=-b$, with $b\geq 0$, then $(a+1)^{2m+1}-a^{2m+1}=b^{2m+1}-(b-1)^{2m+1}$. Telescoping as before we obtain the equivalence of $(\ref{eq:m4850})$, where the order of summation $j=-(b-1),-(b-2),\ldots,-2$, yields $(b-1)^{2m+1}$.

We also note how the binomial identities of Lemma 3.2 appear in $(\ref{eq:m48})$. Using the binomial expansion we obtain
\[
\sum_{k=0}^m \frac{2m+1}{2(m-k)+1} \binom{2m-k}{k}\sum_{s=0}^k\binom{k}{s}a^{k+s}
=\sum_{j=0}^{2m}\binom{2m+1}{j}a^j,
\]
and comparing coefficients of powers of $a$ yields
\be
\sum_{2t\geq j} \frac{2m+1}{2(m-t)+1} \binom{2m-t}{t}\binom{t}{j-t}=\binom{2m+1}{j}.
\label{eq:b10}
\ee
Setting $t=k+v$ in $(\ref{eq:b10})$ and then taking $j=2v$ or $j=2v-1$ respectively gives us $(\ref{eq:b1})$ or $(\ref{eq:b2})$ of Lemma 3.2 and so we have another proof
of the first identity in $(\ref{eq:m48})$.

\end{rmk1}

\begin{rmk2}[to Theorem 3]
A further consequence of $P_m^{\rm inv}(x^2+x)=(x+1)^{2m+1}-x^{2m+1}$, is that
\[
\sum_{x=1}^n \frac{1}{(x^2+x)^m}=\sum_{j=0}^m (-1)^j\binom{2m+1}{j}\sum_{x=1}^n \frac{(x+1)^{2m-2j+1}-x^{2m-2j+1}}{(x^2+x)^{m-j}},
\]
so that
\[
\frac{1}{(x^2+x)^m}=\sum_{k=0}^m (-1)^{m-k}\binom{2m+1}{m-k} \frac{(x+1)^{2k+1}-x^{2k+1}}{(x^2+x)^{k}}.
\]
\end{rmk2}

\begin{thm4}
For positive integers $m$ and $n$ we have
\be
\frac{2n+1}{2m+1}\in \mathbb{Z} \Leftrightarrow\frac{P_n(x)}{P_m(x)}\in \mathbb{Z}[x],\qquad \frac{P^{\rm Inv}_n(x)}{P^{\rm Inv}_m(x)}\in \mathbb{Z}[x],
\label{eqth41}
\ee
and
\be
\frac{n}{m}=2k+1 \text{ is odd } \Leftrightarrow \frac{Q_n(x)}{Q_m(x)}\in \mathbb{Z}[x],\qquad \frac{Q^{\rm Inv}_n(x)}{Q^{\rm Inv}_m(x)}\in \mathbb{Z}[x],
\label{eqth42}
\ee
from which we deduce that for $a\in \mathbb{N}$,
\be
\frac{2n+1}{2m+1}\in \mathbb{Z} \Leftrightarrow\frac{(a+1)^{2n+1}- a^{2n+1}}{(a+1)^{2m+1}- a^{2m+1}}\in \mathbb{N},
\label{eqth43}
\ee
and
\be
\frac{n}{m}=2k+1 \text{ is odd } \Leftrightarrow \frac{(a+1)^{2n}+ a^{2n}}{(a+1)^{2m}+ a^{2m}}\in \mathbb{N}.
\label{eqth44}
\ee
\end{thm4}

\begin{proof}
The divisibility properties of the polynomials $P_m(x)$, $P^{\rm Inv}$  $Q_m(x)$ and $Q^{\rm Inv}$ follow directly from their representations in terms of Fibonacci and Lucas polynomials given in $(\ref{eq:i18})$ and $(\ref{eq:i185})$ of Section 2. The integer divisibility properties stated in $(\ref{eqth43})$ and $(\ref{eqth44})$ then follow by setting $x=a^2+a$ in Theorems 2 and 3.
\end{proof}

\begin{rmk}[to Theorem 4]
An observation by M. N. Huxley is that setting $n=q m$, for any positive integer $q$, we can write a general form of the polynomial quotient stated
in $(\ref{eqth43})$ as
\be
\frac{(a+1)^{n}- a^{n}}{(a+1)^{m}- a^{m}}=\frac{(a+1)^{q m}- a^{q m}}{(a+1)^{m}- a^{m}}=\sum_{i=1}^q a^{(q-i)m}(a+1)^{(i-1)m},
\label{eqth45}
\ee
which is a geometric series with first term $a^{(q-1)m}$, constant ratio $((a+1)/a)^m$ and $q$-terms in total.

A similar general $q$-term geometric series exists for $(\ref{eqth44})$, with with first term $a^{(q-1)m}$, constant ratio $-((a+1)/a)^m$, and the added condition that $q$ is odd, so that we get
\be
\frac{(a+1)^{n}+ a^{n}}{(a+1)^{m}+ a^{m}}=\frac{(a+1)^{q m}+ a^{q m}}{(a+1)^{m}+ a^{m}}=\sum_{i=1}^q (-1)^{i-1}a^{(q-i)m}(a+1)^{(i-1)m}.
\label{eqth451}
\ee
\end{rmk}

\begin{thm5}
Let $u=n(n+1)$ and $v=\frac{n+1}{n^2}$. Then we may write
\[
4m\Sigma n^{2m-1}=\sum_{j=1}^m \binom{2m}{2j-1}P_{j-1}^{\rm Inv} (u)B_{2m-2j+1}+\sum_{j=1}^m \binom{2m}{2j}Q_{j}^{\rm Inv} (u)B_{2m-2j},
\]
\[
(4m+2)\Sigma n^{2m}=
\sum_{j=1}^{m+1} \binom{2m+1}{2j-1}(2n+1)\mathcal{P}_{j-1}^{\rm Inv} (u)B_{2m-2j+2}
\]
\[
+\sum_{j=1}^m \binom{2m+1}{2j}(2n+1)\mathcal{Q}_{j}^{\rm Inv} (u)B_{2m-2j+1},
\]
which we can sum over the first $n$ integers to get $4m\Sigma^2 n^{2m-1}=$
\[
\sum_{j=1}^m \binom{2m}{2j-1}n^{2j-1}P_{j-1}^{\rm Inv} (v)B_{2m-2j+1}+
\sum_{j=1}^m \binom{2m}{2j}\left ((n+2)n^{2j-1}Q_{j}^{\rm Inv} (v)+2\,\Sigma n^{2j} \right ) B_{2m-2j},
\]
and
\[
(4m+2)\Sigma^2 n^{2m}=
\sum_{j=1}^{m+1} \binom{2m+1}{2j-1}\left ( n^{2j-1}P_{j-1}^{\rm Inv} (v)+2\,\Sigma n^{2j-1} \right )B_{2m-2j+2}
\]
\[
+\sum_{j=1}^m \binom{2m+1}{2j}(n+2)n^{2j-1}\mathcal{Q}_{j}^{\rm Inv} (v)B_{2m-2j+1},
\]
\end{thm5}
\begin{corollary} We have
\[
\frac{1}{2m+1}\sum_{j=1}^{m+1}\binom{2m+1}{2j-1}\left (\Sigma^r n^{2j-1}\right ) B_{2m+2-2j}=\Sigma^{r+1} n^{2m}-\frac{1}{2}\Sigma^r n^{2m},
\]
and
\[
\frac{1}{2m}\sum_{j=1}^{m}\binom{2m}{2j}\left (\Sigma^r n^{2j}\right ) B_{2m-2j}=\Sigma^{r+1} n^{2m-1}-\frac{1}{2}\Sigma^r n^{2m-1},
\]
\end{corollary}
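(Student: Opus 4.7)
The plan is to recognise the $r=0$ case of each identity in the Corollary as simply Bernoulli's formula (equation (\ref{eq:bin2})) rewritten, and then to pass to arbitrary $r$ by applying the linear operator $\Sigma^r$ to both sides of a polynomial identity in $n$.

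First I would establish the $r=0$ cases. Specialising Bernoulli's formula (\ref{eq:bin2}) to the exponent $2m$ gives
\[
(2m+1)\Sigma n^{2m} = \sum_{j=0}^{2m}(-1)^j \binom{2m+1}{j} n^{2m+1-j}B_j.
\]
Only the $j=0$ term, the $j=1$ term (which contributes $\tfrac{2m+1}{2}n^{2m}$ since $B_1=-1/2$), and the even-index terms survive, because $B_j=0$ for odd $j\geq 3$. Reindexing the even part by $j = 2m+2-2i$, with $i$ running from $1$ to $m+1$, and peeling off the $\tfrac{1}{2}n^{2m}$ term, one obtains
\[
\Sigma n^{2m}-\tfrac{1}{2}n^{2m} = \frac{1}{2m+1}\sum_{i=1}^{m+1}\binom{2m+1}{2i-1} n^{2i-1} B_{2m+2-2i},
\]
which is the $r=0$ case of the first identity. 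The analogous manipulation of (\ref{eq:bin2}) applied to the exponent $2m-1$ yields the $r=0$ case of the second identity. Alternatively, one could read these off from the displays of Theorem 5 using the Theorem 3 identities $(2n+1)\mathcal{P}_{j-1}^{\rm Inv}(n(n+1)) = (n+1)^{2j-1}+n^{2j-1}$ and $(2n+1)\mathcal{Q}_j^{\rm Inv}(n(n+1)) = (n+1)^{2j}-n^{2j}$, noting that in the second sum every term except $j=m$ carries a vanishing odd-index Bernoulli number.

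Both sides of each $r=0$ identity are polynomials in $n$ with rational coefficients. The operator $\Sigma^r$ is $\mathbb{Q}$-linear, carries polynomials to polynomials, and the Bernoulli numbers are scalar constants that pass through it. Applying $\Sigma^r$ to both sides of the first $r=0$ identity and using the recursion $\Sigma^r\Sigma = \Sigma\circ\Sigma^r = \Sigma^{r+1}$ gives
\[
\Sigma^{r+1}n^{2m}-\tfrac{1}{2}\Sigma^r n^{2m} = \frac{1}{2m+1}\sum_{i=1}^{m+1}\binom{2m+1}{2i-1}\bigl(\Sigma^r n^{2i-1}\bigr) B_{2m+2-2i},
\]
which is the first claimed identity. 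The second is obtained by the same procedure.

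The expected main obstacle is essentially bookkeeping: correctly separating the $B_1=-1/2$ contribution in Bernoulli's formula (so that the $-\tfrac{1}{2}\Sigma^r n^{2m}$ appears on the left with the right sign), and verifying the reindexing between the form in (\ref{eq:bin2}) and the form required by the Corollary. Once these are in hand, the whole statement is an immediate consequence of linearity of $\Sigma^r$ together with the semigroup property $\Sigma^{r+1}=\Sigma\circ\Sigma^r$.
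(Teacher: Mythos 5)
Your proof is correct, but it reaches the base case by a different and more elementary route than the paper. The paper derives the $r=1$ instance first: it sums the two displays of Theorem 5 over the first $n$ integers, substitutes the ``integer power minus one'' forms $(n+1)^{2j}-1$, $(n+1)^{2j-1}-1$ coming from Theorems 2 and 3, combines the odd- and even-indexed Bernoulli sums using $\sum_{j=1}^m \binom{m}{j}B_{m-j}=0$, and only then iterates $\Sigma$ a further $(r-1)$ times. You instead observe that the $r=0$ instance is nothing but Faulhaber's formula (\ref{eq:bin2}) with the odd-index Bernoulli numbers discarded, the $B_1$ term moved to the left-hand side, and the even part reindexed by $j=2m+2-2i$ (respectively $j=2m-2i$); your index bookkeeping and the sign of the $\tfrac12 n^{2m}$ term both check out against the paper's convention $B_1=-1/2$ with the factor $(-1)^j$. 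The concluding step --- applying the $\mathbb{Q}$-linear operator $\Sigma^r$ and using $\Sigma^r\circ\Sigma=\Sigma^{r+1}$ --- is the same mechanism the paper uses. What your route buys is independence from the Theorem 5 machinery entirely (the statement really is a direct corollary of (\ref{eq:bin2})), and as a bonus it covers the $r=0$ instance, which the paper's induction, starting from $r=1$, does not literally reach; what it loses is the connection to the $P^{\rm Inv}$, $Q^{\rm Inv}$ polynomial expressions that motivates placing the result as a corollary to Theorem 5.
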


\begin{proof}
Setting $u=n(n+1)$, and using the Bernoulli polynomial identity $B_m(x+1)=(-1)^m B_m(-x)$, we have
\[
n+1=\frac{1+\sqrt{1+4u}}{2},
\]
and
\[
B_m\left (\frac{1+\sqrt{1+4u}}{2}  \right )=(-1)^m B_m\left (\frac{1-\sqrt{1+4u}}{2}  \right ),
\]
so that $m\Sigma n^{m-1}=$
\[
B_{m}\left (\frac{1+\sqrt{1+4u}}{2}  \right )-B_{m}(0)=(-1)^m B_{m}\left (\frac{1-\sqrt{1+4u}}{2}  \right )-B_{m}(0).
\]
Considering now $2m\Sigma n^{2m-1}$, we cancel the $B_{2m}$ coefficient in each sum and then add the sums to obtain
\[
4m\Sigma n^{2m-1}=\sum_{j=1}^{2m}\binom{2m}{j}\left [\left (\frac{1+\sqrt{1+4u}}{2}\right )^j + \left (\frac{1-\sqrt{1+4u}}{2}\right )^j\right ] B_{2m-j},
\]
which can be split into odd and even parts such that
\[
4m\Sigma n^{2m-1}=
\sum_{j=1}^{m}\binom{2m}{2j}\left [\left (\frac{1+\sqrt{1+4u}}{2}\right )^{2j} + \left (\frac{1-\sqrt{1+4u}}{2}\right )^{2j}\right ] B_{2m-2j}
\]
\[
+\sum_{j=1}^{m}\binom{2m}{2j-1}\left [\left (\frac{1+\sqrt{1+4u}}{2}\right )^{2j-1} + \left (\frac{1-\sqrt{1+4u}}{2}\right )^{2j-1}\right ] B_{2m-2j+1},
\]
and by Lemma 3.1 we have
\[
4m\Sigma n^{2m-1}=
\sum_{j=1}^{m}\binom{2m}{2j}  Q_j^{\rm Inv}(u) B_{2m-2j}+\sum_{j=1}^{m}\binom{2m}{2j-1}P_{j-1}^{\rm Inv}(u) B_{2m-2j+1},
\]
which is the first display of Theorem 5.

To see the second display we proceed as before to obtain
\[
(4m+2)\Sigma n^{2m}=
\sum_{j=1}^{m}\binom{2m+1}{2j}\left [\left (\frac{1+\sqrt{1+4u}}{2}\right )^{2j} - \left (\frac{1-\sqrt{1+4u}}{2}\right )^{2j}\right ] B_{2m-2j+1}
\]
\[
+\sum_{j=1}^{m+1}\binom{2m+1}{2j-1}\left [\left (\frac{1+\sqrt{1+4u}}{2}\right )^{2j-1} - \left (\frac{1-\sqrt{1+4u}}{2}\right )^{2j-1}\right ] B_{2m-2j+2},
\]
and by Lemma 3.1 we have $(4m+2)\Sigma n^{2m}=$
\[
\sum_{j=1}^{m}\binom{2m+1}{2j}  (2j+1)\mathcal{Q}_j^{\rm Inv}(u) B_{2m-2j+1}+\sum_{j=1}^{m}\binom{2m+1}{2j-1}(2j+1)\mathcal{P}_{j-1}^{\rm Inv}(u) B_{2m-2j+2},
\]
as required.

For the final two displays concerning the second-fold summations we apply Theorems 2 and 3 to obtain
\[
\sum_{n=1}^N P^{\rm Inv}_{j-1}(n(n+1))=(N+1)^{2j-1}-1=N^{2j-1}P^{\rm Inv}_{j-1}\left (\frac{N+1}{N^2}\right ),
\]
\[
\sum_{n=1}^N  Q^{\rm Inv}_j(n(n+1))=(N+1)^{2j}-1+2\,\Sigma N^{2j}
=(N+2)N^{2j-1}\mathcal{Q}^{\rm Inv}_{j}\left (\frac{N+1}{N^2}\right )+2\,\Sigma N^{2j},
\]
\[
\sum_{n=1}^N (2n+1)\mathcal{P}^{\rm Inv}_{j-1}(n(n+1))=(N+1)^{2j-1}-1+2\,\Sigma N^{2j-1}=N^{2j-1}P^{\rm Inv}_{j-1}\left (\frac{N+1}{N^2}\right )+2\,\Sigma N^{2j-1},
\]
\[
\sum_{n=1}^N  (2n+1)\mathcal{Q}^{\rm Inv}_j(n(n+1))=(N+1)^{2j}-1=(N+2)N^{2j-1}\mathcal{Q}^{\rm Inv}_{j}\left (\frac{N+1}{N^2}\right ).
\]
Summing over the first $n$ integers for the first two displays of Theorem 5 and then inserting the above four right hand side identities yields the required expressions.

To see the Corollary, instead of inserting the right hand expressions we use the central integer power minus one expressions. to obtain
$4m\Sigma^2 n^{2m-1}=$
\[
\sum_{j=1}^m \binom{2m}{2j-1}\left ((n+1)^{2j-1}-1\right )B_{2m-2j+1}+
\sum_{j=1}^m \binom{2m}{2j}\left ((n+1)^{2j}-1+2\,\Sigma n^{2j} \right ) B_{2m-2j},
\]
and
\[
(4m+2)\Sigma^2 n^{2m}=
\sum_{j=1}^{m+1} \binom{2m+1}{2j-1}\left ( (n+1)^{2j-1}-1+2\,\Sigma n^{2j-1} \right )B_{2m-2j+2}
\]
\[
+\sum_{j=1}^m \binom{2m+1}{2j}\left ((n+1)^{2j}-1\right )B_{2m-2j+1}.
\]
Combining the odd and even power sums in each of the two expressions, applying the identity $\sum_{j=1}^m \binom{m}{j}B_{m-j}=0$
and rearranging then gives us
\[
\frac{1}{2m+1}\sum_{j=1}^{m+1}\binom{2m+1}{2j-1}\left (\Sigma^1 n^{2j-1}\right ) B_{2m+2-2j}=\Sigma^{2} n^{2m}-\frac{1}{2}\Sigma^1 n^{2m},
\]
and
\[
\frac{1}{2m}\sum_{j=1}^{m}\binom{2m}{2j}\left (\Sigma^1 n^{2j}\right ) B_{2m-2j}=\Sigma^{2} n^{2m-1}-\frac{1}{2}\Sigma^1 n^{2m-1},
\]
where $\Sigma^1 n^m$ is the standard Faulhaber sum. Repeatedly summing ($r-1$) times over the first $n$ integers on both sides,
we deduce the results of the Corollary.
\end{proof}

\begin{rmk}[To the Corollary]
Let
\[
\Sigma^{r} n^{m}=\binom{n+r}{r+1}G_{m}^{(r)}(n),\quad \text{\rm with}\quad G_{m}^{(r)}(n)=\sum_{k=0}^{m-1} g_{m\, k}^{(r)} n^k,
\]
as in $(\ref{eq:in61})$, so that $G_{m}^{(r)}(n)$ is a polynomial in $n$ of degree $m-1$.

Then substituting for $G_{m}^{(r)}(n)$ into the displays of the Corollary, and cancelling the binomial coefficients yields the expressions
\be
\frac{1}{2m+1}\sum_{j=1}^{m+1}\binom{2m+1}{2j-1}G_{2j-1}^{(r)}(n) \,B_{2m+2-2j}=\frac{n+r+1}{r+2}G_{2m}^{(r+1)}(n)-\frac{1}{2}G_{2m}^{(r)}(n),
\label{eq:in71}
\ee
\be
\frac{1}{2m}\sum_{j=1}^{m}\binom{2m}{2j}G_{2j}^{(r)}(n)\, B_{2m-2j}=\frac{n+r+1}{r+2} G_{2m-1}^{(r+1)}(n)-\frac{1}{2}G_{2m-1}^{(r)}(n).
\label{eq:in81}
\ee
Equating coefficients of powers of $n$ in $(\ref{eq:in71})$ and rearranging then gives the Bernoulli relations between the $(r+1)$-fold and the the $r$-fold sum polynomial coefficients such that
\[
\sum_{k=0}^{2t}\binom{2m+1}{t}g_{(2m+1-k)\, (2m-2t)}^{(r)}B_{k}=\frac{2m+1}{r+2}\left (g_{(2m)\, (2m-1-2t)}^{(r+1)}+(r+1)g_{(2m)\, (2m-2t)}^{(r+1)}\right )
\]
and
\[
\sum_{k=0}^{2t}\binom{2m+1}{t}g_{(2m+1-k)\, (2m-1-2t)}^{(r)}B_{k}=\frac{2m+1}{r+2}\left (g_{(2m)\, (2m-2-2t)}^{(r+1)}+(r+1)g_{(2m)\, (2m-1-2t)}^{(r+1)}\right ).
\]
Similar identities follow from $(\ref{eq:in81})$, and using an analogous argument applied to the ``inner polynomials'' $p_m^{(r)}(x)$, $q_m^{(r)}(x)$, $u_m^{(r)}(x)$ and $v_m^{(r)}(x)$, each of degree $m$, as defined in Lemma 2.3, one can also deduce Bernoulli relations between the $(r+1)$-fold and the the $r$-fold sums in terms of these polynomial coefficients.
\end{rmk}

%\begin{thm6} We can generalise the Corollary to Theorem 5 for any polynomial $q(x)$ of finite degree, so that
%\[
%\frac{1}{2m+1}\sum_{j=1}^{m+1}\binom{2m+1}{2j-1}\left (\Sigma^r q(n)^{2j-1}\right ) B_{2m+2-2j}=\Sigma^{r+1} q(n)^{2m}-\frac{1}{2}\Sigma^r q(n)^{2m},
%\]
%and
%\[
%\frac{1}{2m}\sum_{j=1}^{m}\binom{2m}{2j}\left (\Sigma^r q(n)^{2j}\right ) B_{2m-2j}=\Sigma^{r+1} q(n)^{2m-1}-\frac{1}{2}\Sigma^r q(n)^{2m-1},
%\]
%\end{thm6}

\section{Some Stirling Lemmas}

In the usual notation let $s(k,r)$ denote the Stirling numbers of the first kind,  $S(k,r)$ the Stirling numbers of the second kind (see \cite{concrete} 6.12), and let $m^{\ul k}$ denote the falling factorial
\[
m(m-1)(m-2)\ldots (m-k+1),
\]
so that $m^{\underline{k}}=(-1)^{k+1}m(1-m)_{k-1}$ in terms of the Pochhammer symbol $(a)_n$.

The $s(k,r)$ count the number of permutations of $k$ elements with $r$ disjoint cycles and are related to $m^{\ul k}$
by the identity
\be
m^{\ul k}=\sum_{r=1}^k s(k,r)m^r,
\label{eq:0}
\ee
whereas the $S(k,r)$ count the number of ways to partition a set of $k$ objects into $r$ non-empty subsets.

Expressions for $\Sigma n^m$ in terms of the Stirling numbers have been known for some time, such as the one given below in Lemma 4.1.
In this section we derive an expression for $\Sigma n^m$ in terms of the Stirling numbers,  leading to a Bernoulli-Stirling relation, before embarking
on a Stirling number path which connects back to the Faulhaber sum $\Sigma n^m$.

\begin{lemma}
We have
$$\Sigma n^m=\sum_{k=1}^m S(m,k){{(n+1)^{\underline{k+1}}} \over {k+1}}.$$
\end{lemma}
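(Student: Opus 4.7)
The plan is to write $j^m$ in the falling-factorial basis via Stirling numbers of the second kind, sum from $j=1$ to $n$, interchange the order of summation, and then apply a telescoping identity for sums of falling factorials.

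First, I would invoke the classical identity
\[
x^m=\sum_{k=1}^m S(m,k)\,x^{\underline{k}},
\]
which is the defining inverse relation to $(\ref{eq:0})$ (and is where the Stirling numbers of the second kind naturally arise). Substituting $x=j$ and summing gives
\[
\Sigma n^m=\sum_{j=1}^n j^m=\sum_{k=1}^m S(m,k)\sum_{j=1}^n j^{\underline{k}},
\]
so everything reduces to evaluating $\sum_{j=1}^n j^{\underline{k}}$ in closed form.

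The key lemma is the discrete analogue of integrating a power: the identity
\[
(j+1)^{\underline{k+1}}-j^{\underline{k+1}}=(k+1)\,j^{\underline{k}},
\]
obtained by factoring $j(j-1)\cdots(j-k+1)$ out of the left-hand side, leaving the factor $(j+1)-(j-k)=k+1$. Telescoping from $j=1$ to $j=n$ yields
\[
\sum_{j=1}^n j^{\underline{k}}=\frac{(n+1)^{\underline{k+1}}-1^{\underline{k+1}}}{k+1}.
\]
For every $k\geq 1$ one has $1^{\underline{k+1}}=1\cdot 0\cdot(-1)\cdots=0$, so the boundary term vanishes and
\[
\sum_{j=1}^n j^{\underline{k}}=\frac{(n+1)^{\underline{k+1}}}{k+1}.
\]
Plugging this back into the double sum produces exactly the claimed formula.

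There is no real obstacle here: the only things to check are the Stirling expansion of $x^m$ and the telescoping computation, both of which are mechanical. The one subtlety worth flagging in the write-up is that the summation starts at $k=1$ rather than $k=0$, which is consistent because $j^{\underline{0}}=1$ would merely contribute $S(m,0)=0$ for $m\geq 1$, and also because the vanishing of $1^{\underline{k+1}}$ requires $k\geq 1$.
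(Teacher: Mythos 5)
Your proposal is correct and follows essentially the same route as the paper: expand $j^m$ in falling factorials via the Stirling numbers of the second kind, interchange the double sum, and evaluate $\sum_{j=1}^n j^{\underline{k}}$ in closed form (the paper states the equivalent rising-factorial sum, which you obtain by telescoping). Your write-up is in fact somewhat more complete than the paper's sketch, since you supply the telescoping identity $(j+1)^{\underline{k+1}}-j^{\underline{k+1}}=(k+1)\,j^{\underline{k}}$ and check the vanishing boundary term explicitly.
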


\begin{proof}
This follows from manipulation of the sum
$$\sum_{m=1}^n (m)_{k+1}={{(k+n+1)!} \over {(k+2)(n-1)!}},$$
the representation
$$j^m=\sum_{k=1}^m S(m,k)j^{\underline{k}},$$
and then reordering the double summation
$$\Sigma n^m=\sum_{j=1}^n \sum_{k=1}^m S(m,k)j^{\underline{k}}.$$
\end{proof}

\begin{lemma}
Let the integers $s_1(m,j)= s(m,j+1)+s(m,j)$.  Then
$$(m+1)\sum_{k=j}^m {{s(m,k)} \over {k+1}}(-1)^{k-j} {{k+1} \choose {k-j}}B_{k-j}=s_1(m,j).$$

\end{lemma}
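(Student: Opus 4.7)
The natural plan is to start from the telescoping identity $\sum_{t=1}^n t^{\underline m} = (n+1)^{\underline{m+1}}/(m+1)$ (the falling-factorial analogue of the summation formula already used in the proof of Lemma 2.3) and to compute its two sides in a common basis. Expanding $t^{\underline m}$ via $(\ref{eq:0})$ converts the left-hand side into $\sum_{r=1}^m s(m,r)\,\Sigma n^r$, and invoking Jacobi's form $(\ref{eq:in4})$ of Faulhaber's formula together with the standard expansion $B_{r+1}(n+1)=\sum_i \binom{r+1}{i}B_{r+1-i}(n+1)^i$ turns it into a polynomial in $(n+1)$ whose coefficients are explicit sums of Stirling-weighted Bernoulli numbers. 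Meanwhile, the right-hand side expands directly by $(\ref{eq:0})$ as $(n+1)^{\underline{m+1}}=\sum_j s(m+1,j)(n+1)^j$, placing the two sides in the same basis.

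Equating coefficients of $(n+1)^{j+1}$ for $0\le j\le m$ and using the symmetry $\binom{k+1}{j+1}=\binom{k+1}{k-j}$ should then yield the unsigned companion
\[
s(m+1,j+1)\;=\;(m+1)\sum_{k=j}^{m}\frac{s(m,k)}{k+1}\binom{k+1}{k-j}B_{k-j},
\]
which differs from the claim only in the absence of $(-1)^{k-j}$ and in having $s(m+1,j+1)$ on the left rather than $s_1(m,j)$. To bridge these I would invoke the Stirling recurrence $s(m+1,j+1)=s(m,j)-m\,s(m,j+1)$ on the left, and handle the sign factor by noting that, since $B_k=0$ for odd $k\ge 3$, inserting $(-1)^{k-j}$ into the sum modifies \emph{only} the single term $k=j+1$, where it replaces $B_1=-\tfrac12$ by $+\tfrac12$. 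A short calculation shows this correction contributes exactly $(m+1)\,s(m,j+1)$ to the right-hand side, so that the corrected sum equals $s(m,j)-m\,s(m,j+1)+(m+1)\,s(m,j+1)=s(m,j)+s(m,j+1)=s_1(m,j)$, as required.

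The main obstacle is organisational rather than conceptual: one must keep track of summation ranges (in particular, that $\binom{r+1}{j+1}$ vanishes for $r<j$, truncating the Bernoulli expansion to the correct starting index), treat the endpoint cases $j=0$ (where $B_1$ genuinely contributes) and $j=m$ (where the sum collapses to a single term) with care, and verify that only the $(n+1)^{j+1}$ coefficient is being extracted, without stray contributions from the constant term $B_{r+1}(0)$. The genuinely substantive ingredient is the interplay between the sign twist $(-1)^{k-j}$ and the Stirling recurrence: it is precisely this interplay that turns the asymmetric combination $s(m,j)-m\,s(m,j+1)$ supplied by the recurrence into the symmetric combination $s(m,j+1)+s(m,j)$ stated in the Lemma.
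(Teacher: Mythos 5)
Your proof is correct, and its computational core --- expanding $t^{\underline{m}}$ via $(\ref{eq:0})$ and summing each $\Sigma n^k$ by Faulhaber's formula --- coincides with the paper's. Where you genuinely differ is in how the resulting coefficients are identified with $s_1(m,j)$. The paper works in powers of $n$, so the coefficient of $n^{j+1}$ is already the signed sum of the Lemma, and it then identifies that coefficient array with $s(m,j+1)+s(m,j)$ by matching a triangular recurrence and initial data against the OEIS triangle A094645, a step it leaves largely to the reader. You instead work in powers of $(n+1)$, where the right-hand side of the telescoping identity is transparently $\sum_j s(m+1,j)(n+1)^j$, obtain the unsigned companion identity with $s(m+1,j+1)$ on the left, and then convert to the stated form via the Stirling recurrence together with the observation that the sign twist $(-1)^{k-j}$ touches only the $k=j+1$ term; your accounting of that correction (it adds $(m+1)s(m,j+1)$, whence $s(m,j)-m\,s(m,j+1)+(m+1)s(m,j+1)=s_1(m,j)$) is exactly right, so your route is self-contained where the paper's is somewhat terse. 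You could, however, shorten it further: since $(n+1)^{\underline{m+1}}=(n+1)\,n^{\underline{m}}=\sum_k s(m,k)\left(n^{k+1}+n^k\right)$, the coefficient of $n^{j+1}$ on the right-hand side of the telescoping identity is already $\left(s(m,j)+s(m,j+1)\right)/(m+1)$ in the $n$-basis, so equating this directly with the signed Faulhaber expansion yields the Lemma in one step, with no change of basis and no separate treatment of $B_1$.
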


\begin{proof}
We have the inverse relation for the falling factorial,
$$j^{\underline{m}}=(-1)^{m+1}j(1-j)_{m-1}=\sum_{k=1}^m s(m,k)j^k,$$
and may rewrite Faulhaber's formula with powers of $n$ as
$$\Sigma n^m={1 \over {m+1}}\sum_{k=1}^{m+1}(-1)^{m-k+1}{{m+1} \choose {m-k+1}}B_{m-k+1}n^k.$$
We then develop the double summation
$$\sum_{j=1}^n j^{\underline{m}}=\sum_{k=1}^m s(m,k)\sum_{j=1}^n j^k$$
$$=\sum_{\ell=1}^{m+1} \sum_{k=\ell-1}^m {{s(m,k)} \over {k+1}}(-1)^{k-\ell+1}{{k+1} \choose
{k-\ell+1}}B_{k-\ell+1}n^\ell.$$
The identification of the coefficients $s_1(r,j)$ is motivated by comparing with the
related generalised Stirling number triangle A094645 of the OEIS, which differ in signs.
It is then determined that $s_1(n,k)$ satisfies the recurrence for $n\geq k\geq 0$
$$s_1(n,k)=s_1(n-1,k-1)-(n-1)s_1(n-1,k),$$  % [initial values here]
with $s_1(0,0)=s_1(1,0)=1$, and $s_1(0,1)=s_1(n,-1)=0$, consistent with the generating function of
columns $h_k(z)=(1+z)\ln^k(1+z)/k!$.

\end{proof}

\begin{thm6}
For integers $m\geq k\geq 0$, we have
$${{(-1)^{m-k}} \over {m+1}}{{m+1} \choose {m-k}}B_{m-k}=\sum_{\ell=k}^m {{S(m,\ell)} \over
{\ell+1}}[s(\ell,k+1)+s(\ell,k)].$$
\end{thm6}

\begin{proof}
From Lemma 4.2 we obtain
$$\sum_{j=1}^n j^{\underline{m}}=\sum_{\ell=0}^m \sum_{k=\ell}^m {{s(m,k)} \over {k+1}}(-1)^{k-\ell}{{k+1} \choose {k-\ell}}B_{k-\ell}n^{\ell+1}$$
$$={1 \over {m+1}}\sum_{\ell=0}^m s_1(m,\ell)n^{\ell+1}.$$
Then for Faulhaber's sum $\Sigma n^m$ we obtain
$$\Sigma n^m=\sum_{k=1}^m S(m,k)\sum_{j=1}^n j^{\underline{k}}$$
$$=\sum_{k=1}^m {{S(m,k)} \over {k+1}}\sum_{\ell=0}^k s_1(k,\ell)n^{\ell+1}$$
$$=\sum_{\ell=0}^m \sum_{k=\ell}^m {{S(m,k)} \over {k+1}} s_1(k,\ell)n^{\ell+1},$$
and equating the coefficients of $n^{k+1}$ in this expression with those of the standard Bernoulli sum for Faulhaber's
formula, the identity follows.
\end{proof}

Returning now to the relation ($\ref{eq:0}$), replacing $m$ with $m+i$ yields
\[
(m+i)^{\ul k}=(m+i)(m+i-1)(m+i-2)\ldots(m+i-k+1)
\]
\[
=\sum_{r=1}^k s(k,r)(m+i)^r =\sum_{r=1}^k s(k,r)\sum_{j=0}^r \binom{r}{j}m^j i^{r-j},
\]
and collecting terms we can write
\[
(m+i)^{\ul k}=\sum_{r=0}^k m^r\sum_{t=0}^{k-r}\binom{r+t}{r}s(k,r+t)i^t.
\]
Considering the symmetric case $(m+k)^{\ul{2k}}$
\be
=(m+k)(m+k-1)\ldots(m-k+1)=\sum_{r=0}^{2k} m^r\sum_{t=0}^{2k-r}\binom{r+t}{r}s(2k,r+t)k^t,
\label{eq:1}
\ee
we now show that it is possible to express $(\ref{eq:1})$ as an integer coefficient polynomial in $(m(m+1))$ of degree $k$.

Let $c(k,r)$, be the coefficient of $(m(m+1))^r$, so that
\be (m+k)^{\ul{2k}} = \sum_{r=1}^k c(k,r)(m(m+1))^r =
\sum_{r=1}^k c(t,r) m^r \sum_{s=0}^r \binom{r}{s}m^s.
\label{eq:2}
\ee
Then equating for powers of $m$ in $(\ref{eq:2})$, we find that the respective coefficients of
$m^{2k-2r}$ and $m^{2k-2r-1}$  are given by
\be \sum_{i=0}^{r} c(k,k-i) \binom{k-i}{k-2r+i},
\label{eq:3}
\ee
and
\be
\sum_{i=0}^{r} c(k,k-i) \binom{k-i}{k-2r-1+i},
\label{eq:4}
\ee
In both cases, equating $(\ref{eq:3})$
and $(\ref{eq:4})$ with $(\ref{eq:1})$ yields the identity
\be
\sum_{i=0}^{r}\binom{k-i}{2r-2i}c(k,k-i)=\sum_{i=0}^{2r}\binom{2k-2r+i}{i}s(2k,2k-2r+i)k^i,
\label{eq:5}
\ee
which relates the $c(k,r)$ numbers to the Stirling numbers of the first kind. It also enables us to write
\be c(k,r)=
-\sum_{i=r+1}^{k}\binom{i}{2i-2r}c(k,i)+\sum_{i=0}^{2k-2r}\binom{2r+i}{i}s(2k,2r+i)k^i,
\label{eq:6}
\ee
where
$c(k,k)=1$. Hence $c(k,r) \in \mathbb{Z}$ for all $1\leq r \leq k$ and $(m+k)^{\ul 2k}$ can be expressed as an integer coefficient polynomial in $(m(m+1))$ with coefficients in $\mathbb{Z}$. The first few coefficients $c(k,r)$ are given
below.
\[
\begin{array}{|c||c|c|c|c|c|c|c|c|}
\hline k \backslash r &r=0&r= 1 &r= 2 &r= 3 &r= 4 &r= 5 & r=6 & r=7 \\ \hline\hline k=0&1 & 0 &0 &0 &0 &0 &0 &0 \\ \hline k=1&0 & 1 &0 &0 &0 &0 &0 &0 \\ \hline k=2& 0 & -2 & 1 &0 &0 &0 &0 &0 \\ \hline k=3&0 & 12 & -8 & 1 &0 &0 &0
&0 \\ \hline k=4& 0 & -144 & 108 & -20 & 1 &0 &0 &0 \\ \hline k=5& 0 & 2880 & -2304 & 508 & -40 & 1 &0 &0 \\ \hline k=6&0 & -86400 & 72000 & -17544 & 1708 & -70
& 1 &0 \\  \hline k=7& 0 & 3628800 & -3110400  & 808848 & -89280& 4648 & -112 & 1\\ \hline
\end{array}
\]
Some special values are
\[
c(k,k-1)=-2\binom{k+1}{3},\qquad c(k,1)=(-1)^{k+1}k!(k-1)!,
\]
and
\[
c(k,2)=(-1)^k (k-1)!\left (k!-(k-1)!\right ).
\]
The integers $c(k,r)$ are in fact the \emph{Legendre-Stirling numbers of the first kind}, $Ps_k^{(r)}$, the main properties of which we state in the following lemmas.

\begin{lemma}
Let $\langle m \rangle^{\ul k}$ denote the generalised falling factorial symbol defined such that
\[
\langle m \rangle^{{\ul k}}= \prod_{r=0}^{k-1}(m-r(r+1)),\qquad (\langle m \rangle^{\ul 0}=1).
\]
Then the Legendre-Stirling numbers of the first kind, $Ps_k^{(r)}$, satisfy the horizontal generating function
\be
\langle m \rangle^{\ul k} = \sum_{r=0}^{k}Ps_{k}^{(r)} m^r.
\label{eq:55}
\ee
\end{lemma}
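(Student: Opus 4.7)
The plan is to recognise that the generalised falling factorial $\langle m\rangle^{\underline{k}}$ coincides, up to a change of variable, with the symmetric falling factorial $(m+k)^{\underline{2k}}$ already expanded in $m(m+1)$ earlier in the section. The key observation is a pairing of factors:
\[
(m+k)^{\underline{2k}} = \prod_{j=1}^{k}(m+j)(m-j+1) = \prod_{j=1}^{k}\bigl(m(m+1)-j(j-1)\bigr) = \prod_{r=0}^{k-1}\bigl(m(m+1)-r(r+1)\bigr),
\]
so that, as polynomial identities in $m$,
\[
(m+k)^{\underline{2k}} = \langle\, m(m+1)\,\rangle^{\underline{k}}.
\]

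Next, I would combine this with the expansion $(m+k)^{\underline{2k}} = \sum_{r=1}^{k} c(k,r)(m(m+1))^r$ derived from equations $(\ref{eq:1})$--$(\ref{eq:6})$ to obtain
\[
\langle\, m(m+1)\,\rangle^{\underline{k}} = \sum_{r=1}^{k} c(k,r)\,(m(m+1))^r.
\]
Since $m(m+1)$ attains infinitely many values as $m$ ranges over $\mathbb{Z}$, and both sides are polynomials of degree $k$ in the single indeterminate $y=m(m+1)$, the identity
\[
\langle y\rangle^{\underline{k}} = \sum_{r=1}^{k} c(k,r)\, y^r
\]
holds identically in $y$. Renaming $y$ to $m$ and appending the vacuous $r=0$ term (with $c(k,0)=0$ for $k\geq 1$, while $c(0,0)=1$ matches the convention $\langle m\rangle^{\underline{0}}=1$) yields
\[
\langle m\rangle^{\underline{k}} = \sum_{r=0}^{k} c(k,r)\, m^r,
\]
which is $(\ref{eq:55})$ once one invokes the identification $c(k,r)=Ps_k^{(r)}$ already stated in the paragraph preceding the lemma.

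There is essentially no obstacle beyond the pairing step; the harder work has been front-loaded into the construction of the integers $c(k,r)$ via $(\ref{eq:5})$--$(\ref{eq:6})$, and the lemma merely records the resulting horizontal generating function in its most natural variable. If one wished to avoid appealing to the earlier recurrence one could instead prove $(\ref{eq:55})$ directly by induction on $k$: the roots of $\langle m\rangle^{\underline{k}}$ as a polynomial in $m$ are exactly $m=r(r+1)$ for $0\leq r\leq k-1$, giving a complete factorisation from which the coefficients $Ps_k^{(r)}$ can be read off (and shown to be integers, since each root is an integer and the leading coefficient is $1$). Either route establishes $(\ref{eq:55})$.
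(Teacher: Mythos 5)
You should first be aware that the paper itself offers no proof of this lemma: it is one of three statements dispatched with a single citation to \cite{andrews2}, where $(\ref{eq:55})$ is essentially the defining property of the $Ps_k^{(r)}$. So any self-contained argument is necessarily a different route from the paper's. Your main route, however, is circular relative to the paper's own logical structure. You invoke the identification $c(k,r)=Ps_k^{(r)}$ as something ``already stated in the paragraph preceding the lemma'', but in the paper that identification is only \emph{asserted} there; it is \emph{proved} afterwards, in Lemma 4.6, and the proof of Lemma 4.6 consists of precisely your pairing computation $(m+k)^{\ul{2k}}=\langle m(m+1)\rangle^{\ul{k}}$ followed by an appeal to $(\ref{eq:55})$ and to the expansion $(\ref{eq:2})$ in order to conclude $c(k,r)=Ps_k^{(r)}$. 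Using that identification as an input to prove $(\ref{eq:55})$ inverts the intended dependency, so your first argument establishes nothing unless the equality $c(k,r)=Ps_k^{(r)}$ is obtained independently of $(\ref{eq:55})$ --- which the paper never does.

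Your fallback is the correct fix, but as written it stops one sentence short of a proof: saying the coefficients ``can be read off'' from the factorisation is empty until you say what the $Ps_k^{(r)}$ are to begin with. Take the triangular recurrence of Lemma 4.4, $Ps_k^{(r)}=Ps_{k-1}^{(r-1)}-k(k-1)Ps_{k-1}^{(r)}$ with the stated boundary values, as the defining property. Then the induction is immediate: since $\langle m\rangle^{\ul{k}}=\bigl(m-k(k-1)\bigr)\langle m\rangle^{\ul{k-1}}$, the inductive hypothesis $\langle m\rangle^{\ul{k-1}}=\sum_r Ps_{k-1}^{(r)}m^r$ gives the coefficient of $m^r$ in $\langle m\rangle^{\ul{k}}$ as $Ps_{k-1}^{(r-1)}-k(k-1)Ps_{k-1}^{(r)}=Ps_k^{(r)}$, and the base case $\langle m\rangle^{\ul 0}=1=Ps_0^{(0)}$ is trivial. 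That two-line induction is the self-contained proof the paper omits; with it in place, your pairing identity then delivers Lemma 4.6 as a corollary, in the order the paper intends, rather than being smuggled in as a premise.
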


\begin{lemma}
The Legendre-Stirling numbers of the first kind, $Ps_k^{(r)}$ satisfy the following triangular recurrence relation
\be
Ps_k^{(r)}=Ps_{k-1}^{(r-1)}-k(k-1)Ps_{k-1}^{(r)},
\label{eq:56}
\ee
where $Ps_k^{(0)}=Ps_0^{(r)}=0$, except $Ps_0^{(0)}=1$.
\end{lemma}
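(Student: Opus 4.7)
The plan is to derive the recurrence directly from the defining product formula for $\langle m\rangle^{\ul k}$ given in Lemma~4.3, using the generating function identity $(\ref{eq:55})$ as the bridge between the product and the coefficients $Ps_k^{(r)}$. The whole argument reduces to one algebraic step plus bookkeeping on the boundary values.

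First I would observe that from the definition $\langle m\rangle^{\ul k}=\prod_{j=0}^{k-1}(m-j(j+1))$ one has the one-step factorisation
\[
\langle m\rangle^{\ul k} \;=\; \bigl(m-k(k-1)\bigr)\,\langle m\rangle^{\ul{k-1}},
\]
since the last factor in the product (corresponding to $j=k-1$) is exactly $m-(k-1)k$. Substituting the horizontal generating function $(\ref{eq:55})$ into both sides of this factorisation yields
\[
\sum_{r=0}^{k} Ps_k^{(r)}\, m^r \;=\; \bigl(m-k(k-1)\bigr)\sum_{r=0}^{k-1} Ps_{k-1}^{(r)}\, m^r.
\]
Expanding the right-hand side and re-indexing the first sum by $r\mapsto r+1$ gives
\[
\sum_{r=0}^{k} Ps_k^{(r)}\, m^r \;=\; \sum_{r=1}^{k} Ps_{k-1}^{(r-1)}\, m^r \;-\; k(k-1)\sum_{r=0}^{k-1} Ps_{k-1}^{(r)}\, m^r.
\]
Comparing the coefficient of $m^r$ on both sides then immediately delivers the desired recurrence
\[
Ps_k^{(r)} \;=\; Ps_{k-1}^{(r-1)} \;-\; k(k-1)\,Ps_{k-1}^{(r)}.
\]

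The remaining task is to verify the stated boundary conditions. The value $Ps_0^{(0)}=1$ follows because $\langle m\rangle^{\ul 0}$ is an empty product equal to $1$, so $(\ref{eq:55})$ forces the single coefficient to be $1$. The values $Ps_0^{(r)}=0$ for $r\geq 1$ follow from the same identity, since $1$ has no higher-degree terms. Finally, for $k\geq 1$ the factor $j=0$ contributes $m-0=m$ to the product defining $\langle m\rangle^{\ul k}$, so $\langle m\rangle^{\ul k}$ is divisible by $m$ and has no constant term, giving $Ps_k^{(0)}=0$.

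There is essentially no obstacle here: the only subtlety is ensuring that the index convention in the product (with $j$ running from $0$ to $k-1$) matches the indexing in the generating function $(\ref{eq:55})$, and making sure that the boundary entries satisfy the recurrence consistently (for instance, the $r=0$ case $Ps_k^{(0)}=Ps_{k-1}^{(-1)}-k(k-1)Ps_{k-1}^{(0)}$ holds provided one interprets $Ps_{k-1}^{(-1)}=0$). Once those conventions are pinned down, the proof is a one-line coefficient comparison.
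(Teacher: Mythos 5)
Your argument is correct, but it is worth noting that the paper itself gives no proof of this lemma at all: Lemmas 4.3--4.5 on the Legendre--Stirling numbers are stated and then dispatched with a single citation to Andrews, Gawronski and Littlejohn \cite{andrews2}. Your derivation is therefore a genuinely different (and more self-contained) route. The key step --- peeling off the last factor of the product, $\langle m\rangle^{\ul k}=\bigl(m-k(k-1)\bigr)\langle m\rangle^{\ul{k-1}}$, substituting the horizontal generating function $(\ref{eq:55})$ on both sides, re-indexing, and comparing coefficients of $m^r$ --- is sound, and your handling of the boundary cases ($Ps_0^{(0)}=1$ from the empty product, $Ps_k^{(0)}=0$ for $k\geq 1$ because the $j=0$ factor is $m$, and the convention $Ps_{k-1}^{(-1)}=0$ at $r=0$) is complete. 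The one logical caveat is that your proof takes $(\ref{eq:55})$ as given, and in the paper that identity (Lemma 4.3) is likewise only cited rather than proved; your argument becomes fully self-contained if one reads $(\ref{eq:55})$ as the \emph{definition} of the coefficients $Ps_k^{(r)}$, which is the natural reading and the standard one in the literature. What your approach buys is transparency: the recurrence is seen to be nothing more than the telescoping structure of the product, whereas the paper's reader must consult an external source.
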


\begin{lemma} The Legendre-Stirling numbers of the first kind, $Ps_k^{(r)}$, satisfy the following identities
\be
\sum_{r=0}^k (-1)^{k+r}Ps_k^{(r)}t^{k-r}=\prod_{r=0}^{k-1} \left (1+r(r+1)t\right ),
\label{eq:57}
\ee
and
\be
\sum_{r=0}^k Ps_k^{(r)}t^{k-r}=\prod_{r=0}^{k-1} \left (1-r(r+1)t\right ).
\label{eq:58}
\ee
\end{lemma}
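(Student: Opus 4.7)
The plan is to derive both identities by specialization of the horizontal generating function of Lemma 4.3, rather than via induction on the recurrence in Lemma 4.4. Lemma 4.3 gives
\[
\prod_{r=0}^{k-1}(m - r(r+1)) \;=\; \sum_{r=0}^{k} Ps_{k}^{(r)}\, m^{r},
\]
which is an identity of polynomials in $m$, hence valid for any formal substitution. My idea is to substitute $m=1/t$ and $m=-1/t$ to convert the falling-factorial-style product into the product forms on the right-hand sides of $(\ref{eq:57})$ and $(\ref{eq:58})$.

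First, for identity $(\ref{eq:58})$, I would set $m=1/t$ in Lemma 4.3. The left side becomes
\[
\prod_{r=0}^{k-1}\!\left(\tfrac{1}{t}-r(r+1)\right) \;=\; \frac{1}{t^{k}}\prod_{r=0}^{k-1}\bigl(1-r(r+1)t\bigr),
\]
while the right side becomes $\sum_{r=0}^{k} Ps_{k}^{(r)}\, t^{-r}$. Multiplying through by $t^{k}$ gives $(\ref{eq:58})$ directly. Next, for identity $(\ref{eq:57})$, I would set $m=-1/t$; the factor $(m-r(r+1))$ becomes $-t^{-1}(1+r(r+1)t)$, so the product on the left acquires a global sign $(-1)^{k}$ and a factor $t^{-k}$, and the right side becomes $\sum_{r=0}^{k} Ps_{k}^{(r)}\,(-1)^{r}\, t^{-r}$. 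Multiplying through by $(-1)^{k}t^{k}$ and absorbing the sign into $(-1)^{k+r}$ yields $(\ref{eq:57})$.

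There is really no obstacle here beyond bookkeeping of signs and powers of $t$: both identities are formal rearrangements of a single specialization of the polynomial identity in Lemma 4.3. The only thing to be slightly careful about is that the substitutions $m=\pm 1/t$ are carried out in $\mathbb{Q}(t)$ (or as formal Laurent polynomials), which is legitimate because both sides of Lemma 4.3 are polynomials in $m$ and the identity holds in $\mathbb{Z}[m]$. As a sanity check I would verify the $k=0$ case (both sides equal $1$, with the empty product convention) and the $k=1$ case (where the right sides are $1$ and $1-0\cdot t=1$ respectively, matching $Ps_{1}^{(0)}=0$, $Ps_{1}^{(1)}=1$ from Lemma 4.4).

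Alternatively, one could give a direct induction on $k$ using the recurrence $Ps_{k}^{(r)}=Ps_{k-1}^{(r-1)}-k(k-1)Ps_{k-1}^{(r)}$ of Lemma 4.4, by multiplying the inductive identity for $k-1$ by $(1\mp(k-1)k\,t)$ and matching coefficients of $t^{k-r}$. This works but is computationally heavier, so I would only include it as a remark; the substitution argument above is the cleanest route.
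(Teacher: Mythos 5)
Your proposal is correct. Note, however, that the paper does not actually prove this lemma: its ``proof'' is a single sentence referring the reader to Andrews, Gawronski and Littlejohn \cite{andrews2} for all three of Lemmas 4.3--4.5. So your argument is not so much a different route as the only route on offer. What you supply is a genuine, self-contained derivation of $(\ref{eq:57})$ and $(\ref{eq:58})$ from the horizontal generating function of Lemma 4.3 (which the paper likewise imports from \cite{andrews2}), and it is the natural one: since $\prod_{r=0}^{k-1}(m-r(r+1))=\sum_{r=0}^{k}Ps_{k}^{(r)}m^{r}$ is a polynomial identity in $m$, the substitutions $m=1/t$ and $m=-1/t$ in $\mathbb{Q}(t)$ followed by clearing $t^{k}$ (and, in the second case, $(-1)^{k}$) give exactly the two stated identities; I checked the sign bookkeeping and it is right, and your $k=0,1$ sanity checks are consistent with $Ps_{1}^{(0)}=0$, $Ps_{1}^{(1)}=1$. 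What your approach buys is that the lemma becomes a one-line corollary of Lemma 4.3 rather than an external citation; the only caveat is that the logical burden is merely pushed back onto Lemma 4.3, which in this paper is itself unproved, so a fully self-contained treatment would still need the identification $\langle m\rangle^{\underline{k}}=(m+\lceil\cdot\rceil)\ldots$ or the recurrence of Lemma 4.4 as a starting point. Your remark that an induction via $Ps_{k}^{(r)}=Ps_{k-1}^{(r-1)}-k(k-1)Ps_{k-1}^{(r)}$ would also work is accurate but, as you say, unnecessary.
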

\begin{proof}
For proofs of the above three lemmas and other properties relating to the Legendre-Stirling numbers we refer the reader to $\cite{andrews2}$.
\end{proof}

\begin{lemma}
The coefficients $c(k,r)$ of $(m(m+1))^r$ in the expansion of $(m+k)^{{\ul{2k}}}$ given in $(\ref{eq:2})$ are the Legendre-Stirling numbers of the first kind $Ps_k^{(r)}$. Hence $c(k,r)=Ps_k^{(r)}$ and we can replace $Ps_k^{(r)}$ with $c(k,r)$ in the above three lemmas.
\end{lemma}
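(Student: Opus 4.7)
The plan is to exploit the symmetry of the $2k$ consecutive factors in $(m+k)^{\underline{2k}}$ by pairing terms equidistant from the ``center,'' which will convert the falling-factorial product into a product of linear expressions in the single variable $u = m(m+1)$. The formula for $Ps_k^{(r)}$ stated in Lemma 4.4 can then be applied directly.

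First I would write
\[
(m+k)^{\underline{2k}}=\prod_{i=-k+1}^{k}(m+i)
=\prod_{i=1}^{k}(m+i)(m+1-i),
\]
pairing the factor with index $i$ against the factor with index $1-i$. A direct expansion shows $(m+i)(m+1-i)=m(m+1)-i(i-1)$, and reindexing with $j=i-1$ gives
\[
(m+k)^{\underline{2k}}=\prod_{j=0}^{k-1}\bigl(m(m+1)-j(j+1)\bigr).
\]
Setting $u=m(m+1)$, the right-hand side is precisely $\langle u\rangle^{\underline{k}}$ in the notation of Lemma 4.4, so by the horizontal generating function $(\ref{eq:55})$ we obtain
\[
(m+k)^{\underline{2k}}=\langle u\rangle^{\underline{k}}=\sum_{r=0}^{k}Ps_k^{(r)}\,u^{r}=\sum_{r=0}^{k}Ps_k^{(r)}\bigl(m(m+1)\bigr)^{r}.
\]
Comparing this with the defining expansion $(\ref{eq:2})$ for the coefficients $c(k,r)$ and noting that $(m(m+1))^0,(m(m+1))^1,\ldots,(m(m+1))^k$ are linearly independent polynomials in $m$, one concludes $c(k,r)=Ps_k^{(r)}$ for every $r$.

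There is essentially no obstacle once the pairing trick is spotted; the only step requiring care is verifying the algebraic identity $(m+i)(m+1-i)=m(m+1)-i(i-1)$ and the reindexing $j=i-1$ so that the factors $j(j+1)$ match the product in Lemma 4.4 exactly. As a consistency check, one may compare the special values already recorded in the table (for instance $c(k,k-1)=-2\binom{k+1}{3}=-\sum_{j=0}^{k-1}j(j+1)$, the elementary symmetric function of degree 1 of $\{j(j+1):0\le j\le k-1\}$ with the expected sign), and compare the recurrence $(\ref{eq:6})$ with the triangular recurrence $(\ref{eq:56})$ of Lemma 4.5, since both produce the same initial data $c(k,k)=Ps_k^{(k)}=1$ and $c(k,0)=Ps_k^{(0)}=0$ for $k\ge 1$.
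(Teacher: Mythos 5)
Your proof is correct and follows essentially the same route as the paper: both pair the factors of $(m+k)^{\underline{2k}}$ symmetrically about the centre to obtain $\prod_{j=0}^{k-1}\bigl(m(m+1)-j(j+1)\bigr)=\langle m(m+1)\rangle^{\underline{k}}$ and then read off the coefficients from the horizontal generating function for $Ps_k^{(r)}$. The only quibble is a reference slip (the generating function $(\ref{eq:55})$ is Lemma 4.3, not 4.4); the mathematics is identical.
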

\begin{corollary}
The Legendre-Stirling numbers of the first kind, $Ps_k^{(r)}$, satisfy the recurrence relation for $c(k,r)$ given in $(\ref{eq:6})$.
\end{corollary}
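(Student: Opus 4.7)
The plan is to recognise $(m+k)^{\underline{2k}}$ as a Legendre--Stirling product by pairing its factors symmetrically about the midpoint. Specifically, I would pair the $j$-th factor from the top, $(m+k-j)$, with the $j$-th factor from the bottom, $(m-k+j+1)$, for $j=0,1,\ldots,k-1$. A direct expansion gives
\[
(m+k-j)(m-k+j+1) = m^2+m-(k-j)(k-j-1) = m(m+1)-(k-j)(k-j-1),
\]
so that, reindexing via $r=k-j-1$, one obtains the symmetric product representation
\[
(m+k)^{\underline{2k}} = \prod_{r=0}^{k-1}\bigl(m(m+1)-r(r+1)\bigr).
\]

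Next, I would apply Lemma 4.3 with the variable replaced by $u=m(m+1)$. By the horizontal generating function $(\ref{eq:55})$ this yields
\[
(m+k)^{\underline{2k}} = \langle m(m+1)\rangle^{\underline{k}} = \sum_{r=0}^{k} Ps_k^{(r)}\bigl(m(m+1)\bigr)^{r}.
\]
For $k\geq 1$ we have $Ps_k^{(0)}=0$ from the boundary conditions of $(\ref{eq:56})$, so the $r=0$ term drops out. Comparing the resulting expansion with the defining relation $(\ref{eq:2})$ for $c(k,r)$, and using linear independence of $\{(m(m+1))^r : 1\leq r\leq k\}$ as polynomials in $m$, identification of coefficients gives $c(k,r)=Ps_k^{(r)}$ for every $1\leq r\leq k$. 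The Corollary is then immediate: the recurrence $(\ref{eq:6})$ for $c(k,r)$ was derived purely by equating coefficients of $m^{2k-2r}$ and $m^{2k-2r-1}$ in the same expansion, so it automatically passes to $Ps_k^{(r)}$.

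The main obstacle is essentially bookkeeping rather than substance: verifying that the $j\leftrightarrow r$ reindexing aligns correctly, and reconciling the two summation ranges (the $c(k,r)$ being defined from $r=1$, while the Legendre--Stirling expansion naturally begins at $r=0$) via the vanishing of $Ps_k^{(0)}$ for $k\geq 1$. There is no deeper combinatorial or analytic content beyond the central pairing identity $(m+k-j)(m-k+j+1)=m(m+1)-(k-j)(k-j-1)$, which is why the argument collapses to a short computation once that identity is in hand.
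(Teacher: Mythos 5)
Your proposal is correct and follows essentially the same route as the paper: the paper likewise pairs the factors of $(m+k)^{\underline{2k}}$ symmetrically to obtain $(m+k-r)(m-(k-r-1))=m(m+1)-r'(r'+1)$, concludes $(m+k)^{\underline{2k}}=\langle m(m+1)\rangle^{\underline{k}}$, identifies $c(k,r)=Ps_k^{(r)}$ via the horizontal generating function, and then transfers the recurrence $(\ref{eq:6})$ by direct substitution. Your additional remarks about $Ps_k^{(0)}=0$ and coefficient comparison are just a slightly more explicit rendering of the same argument.
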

\begin{proof}
To prove the equality of $c(k,r)$ and $Ps_k^{(r)}$ we need only show that
\[
(m+k)^{\ul{2k}}=\prod_{r=0}^{2k-1}(m+k-r)=\langle m(m+1) \rangle^{\ul k}=\prod_{r=0}^{k-1}(m(m+1)-r(r+1)).
\]
Now $(m+k)^{\ul{2k}}$
\[
=(m+k)(m+k-1)\ldots(m+2)(m+1)m(m-1)\ldots(m-(k-2))(m-(k-1)),
\]
\[
=(m+k)(m-(k-1)\ldots (m+k-r)(m-(k-r-1))\ldots (m+2)(m-1)(m+1)m.
\]
\[
=\prod_{r=0}^{k-1}(m(m+1)-r(r+1))=\langle m(m+1) \rangle^{\ul k}\,,
\]
and hence the result.

The Corollary then follows by replacing $c(k,r)$ with $Ps_k^{(r)}$ and $c(k,i)$ with $Ps_k^{(i)}$
in $(\ref{eq:6})$.
\end{proof}

\begin{rmk}
Comparing the horizontal generating function for the Stirling numbers of the first kind in $(\ref{eq:0})$, with
that for the Legendre-Stirling numbers of the first kind in $(\ref{eq:55})$, we see that both types of numbers satisfy a similar formula. This similarity between the two types of numbers also extends to the triangular recurrence given in $(\ref{eq:56})$, which is the counterpart to the classical triangular identity for the Stirling numbers of the first kind
\[
s_k^{(r)}=s_{k-1}^{(r-1)}-(k-1)s_{k-1}^{(r)}.
\]
Further symmetries concern the existence of the Legendre-Stirling numbers of the second kind $PS_k^{(r)}$, which obey similar relations to their classical cousins the Stirling numbers of the second kind. The properties of these numbers are examined in the paper by Andrews, Gawronski and Littlejohn \cite{andrews2}. We give the first few $PS_k^{(r)}$ numbers in the triangular array below, where we note that this is the inverse matrix of the triangular array for $Ps_k^{(r)}$.
\[
\begin{array}{|c||c|c|c|c|c|c|c|c|}
\hline k \backslash r &r=0&r= 1 &r= 2 &r= 3 &r= 4 &r= 5 & r=6 & r=7 \\
\hline\hline k=0&1 & 0 &0 &0 &0 &0 &0 &0 \\
\hline k=1& 0 & 1 & 0 & 0 & 0 & 0 & 0 & 0 \\
\hline k=2& 0 & 2 & 1 & 0 & 0 & 0 & 0 & 0 \\
\hline k=3&0 & 4 & 8 & 1 & 0 & 0 & 0 & 0 \\
\hline k=4& 0 & 8 & 52 & 20 & 1 & 0 & 0 & 0 \\
\hline k=5& 0 & 16 & 320 & 292 & 40 & 1 & 0 & 0 \\
\hline k=6& 0 & 32 & 1936 & 3824 & 1092 & 70 & 1 & 0 \\
\hline k=7& 0 & 64 & 11648 & 47824 & 25664 & 3192 & 112 & 1 \\
\hline
\end{array}
\]
It is also worth noting that there are combinatorial interpretations of the Legendre-Stirling numbers of both kinds which are outlined in \cite{andrews1} and \cite{egge}.

The identity $(\ref{eq:5})$ directly relates the Legendre-Stirling numbers of the first kind to the Stirling numbers of the first kind. Not surprisingly this ``meeting point'' between the two types of numbers is of interest and we summarise the main characteristics of the sums involved in the following lemma.
\end{rmk}
\begin{lemma}
Let $\mathcal{S}^{(r)}_k$ denote the sum in $(\ref{eq:5})$, so that in the orientation of $(\ref{eq:6})$ we write
\be
\mathcal{S}^{(r)}_k=\sum_{i=r}^{k}\binom{i}{2i-2r}Ps_k^{(i)}=\sum_{i=0}^{k}\binom{i}{2i-2r}Ps_k^{(i)}.
\label{eq:795}
\ee
Then the numbers $\mathcal{S}^{(r)}_k$ are the integer sequence (OEIS A204579) and so satisfy the triangular recurrence relation
\be
\mathcal{S}^{(r)}_k=-(k-1)^2\mathcal{S}_{k-1}^{(r)}+\mathcal{S}_{k-1}^{(r-1)}.
\label{eq:80}
\ee
If in addition we define $\mathcal{S}_k$ and $\mathcal{S}_k^+$ such that
\[
\mathcal{S}_k=\sum_{r=0}^{k}\mathcal{S}_k^{(r)},\qquad \mathcal{S}^+_k=\sum_{r=0}^{k}|\mathcal{S}_k^{(r)}|,
\]
then $\mathcal{S}_0=\mathcal{S}_1=\mathcal{S}^+_0=\mathcal{S}^+_1=1$.
Considering the ratio $\mathcal{S}_k^+/\mathcal{S}_{k-1}^+$ we have
\[
\mathcal{S}_k^+ / \mathcal{S}_{k-1}^+=1+\sum_{t=1}^{k-1}(2t-1)=1+(k-1)^2,
\]
from which we deduce that for $k\geq 1$,
\be
\mathcal{S}_k^+=\prod_{t=1}^{k}(1+(t-1)^2)=\prod_{t=0}^{k-1}(1+t^2),\qquad
\mathcal{S}_k=\prod_{t=0}^{k-1}(1-t^2),
%\frac{\sinh (\pi ) \Gamma (k-(1+i)) \Gamma (k-(1-i))}{\pi }
\label{eq:81}
\ee
so that for $k\geq 2$, $\mathcal{S}_k=0$.
It also follows that a generating function for the $\mathcal{S}_k^{(r)}$ is given by
\be
\prod_{t=0}^{k-1}(x-t^2)=\sum_{r=1}^k\mathcal{S}_k^{(r)}x^{r}.
\label{eq:815}
\ee
\end{lemma}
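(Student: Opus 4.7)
The plan is to establish the generating function identity $(\ref{eq:815})$ first; every other claim of the lemma then falls out by evaluation, factoring, or comparison of coefficients. The first equality of $(\ref{eq:795})$ is immediate, since $\binom{i}{2i-2r}=0$ whenever $i<r$ (the lower index is negative), so extending the sum down to $i=0$ only pads with zero terms.

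To prove $(\ref{eq:815})$, I swap the order of summation to write
\[
\sum_{r}\mathcal{S}_k^{(r)}\,x^r=\sum_{i=0}^{k}Ps_k^{(i)}\sum_{r}\binom{i}{2i-2r}x^r,
\]
set $x=y^{2}$, and substitute $j=i-r$, so the inner sum is $\sum_{j}\binom{i}{2j}y^{2i-2j}$. The even-index binomial identity $\sum_{j}\binom{i}{2j}z^{2j}=\tfrac12\bigl[(1+z)^i+(1-z)^i\bigr]$ taken at $z=1/y$ collapses this to $\tfrac12\bigl[(y(y+1))^i+(y(y-1))^i\bigr]$. Feeding the outer sum into the Legendre--Stirling horizontal generating function $(\ref{eq:55})$, i.e.\ $\sum_i Ps_k^{(i)}u^i=\langle u\rangle^{\underline k}$, gives
\[
\sum_{r}\mathcal{S}_k^{(r)}\,x^r=\tfrac12\bigl[\langle y(y+1)\rangle^{\underline k}+\langle y(y-1)\rangle^{\underline k}\bigr].
\]
The identities $y(y+1)-t(t+1)=(y-t)(y+t+1)$ and $y(y-1)-t(t+1)=(y-t-1)(y+t)$ factor these products as $\prod_{i=-k+1}^{k}(y+i)$ and $\prod_{i=-k}^{k-1}(y+i)$ respectively, which share the common factor $A:=\prod_{i=-k+1}^{k-1}(y+i)$. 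Their sum is $A\bigl[(y+k)+(y-k)\bigr]=2yA$, and $yA=\prod_{t=0}^{k-1}(y-t)(y+t)=\prod_{t=0}^{k-1}(x-t^2)$, which is $(\ref{eq:815})$.

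Everything else is formal. Factoring $\prod_{t=0}^{k-1}(x-t^2)=(x-(k-1)^2)\prod_{t=0}^{k-2}(x-t^2)$ and comparing coefficients of $x^r$ yields the triangular recurrence $(\ref{eq:80})$; together with the initial values $\mathcal{S}_0=\mathcal{S}_0^+=\mathcal{S}_1=\mathcal{S}_1^+=1$ (direct from the definition), this pins down the triangle as OEIS~A204579. Substituting $x=1$ into $(\ref{eq:815})$ gives $\mathcal{S}_k=\prod_{t=0}^{k-1}(1-t^2)$, which vanishes for $k\geq 2$ owing to the factor with $t=1$. The factored product form makes the sign pattern $\mathrm{sgn}\,\mathcal{S}_k^{(r)}=(-1)^{k-r}$ manifest (leading coefficient $1$; each non-leading factor contributes $-t^2$), so substituting $x=-1$ and absorbing $(-1)^k$ gives $\mathcal{S}_k^+=\sum_r(-1)^{k-r}\mathcal{S}_k^{(r)}=\prod_{t=0}^{k-1}(1+t^2)$. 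Taking the ratio with $\mathcal{S}_{k-1}^+$ telescopes to $1+(k-1)^2=1+\sum_{t=1}^{k-1}(2t-1)$, finishing the lemma.

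The main obstacle is the opening move of the second paragraph: recognising that the even-indexed contraction of the binomial expansion at $z=1/y$ produces precisely the arguments $y(y\pm 1)$ required to feed into $(\ref{eq:55})$, and that the resulting two polynomial products then pair off through the single shared factor $A$ to give the clean square-difference form $\prod_{t=0}^{k-1}(y^2-t^2)$. Everything downstream is algebraic bookkeeping and evaluation at $x=\pm 1$.
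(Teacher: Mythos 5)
Your proof is correct, but it takes a genuinely different route from the paper's. The paper's own proof is a terse verification: it substitutes the definition $(\ref{eq:795})$ into the recurrence $(\ref{eq:80})$, rearranges (implicitly using the triangular recurrence for the $Ps_k^{(r)}$), and then identifies the resulting triangle with OEIS A204579, the signed central factorial numbers, from which the generating function $(\ref{eq:815})$ and the evaluations are quoted. You instead prove the closed-form generating function first and entirely from scratch: swapping the order of summation, contracting the even-indexed binomial sum at $z=1/y$ to produce the arguments $y(y\pm 1)$, feeding these into the horizontal generating function $\langle u\rangle^{\underline{k}}=\sum_i Ps_k^{(i)}u^i$ of Lemma 4.3, and pairing the two shifted factorial products $\prod_{i=-k+1}^{k}(y+i)$ and $\prod_{i=-k}^{k-1}(y+i)$ through their common factor to land on $\prod_{t=0}^{k-1}(x-t^2)$. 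The recurrence, the sign pattern $\mathrm{sgn}\,\mathcal{S}_k^{(r)}=(-1)^{k-r}$, the evaluations at $x=\pm1$, and the OEIS identification then all follow as corollaries. What your approach buys is self-containedness and a cleaner logical order — the generating function is derived rather than imported from the OEIS entry, and $(\ref{eq:815})$ (which the paper introduces with only ``it also follows that'') gets an actual proof; the cost is the extra work in the opening computation, whereas the paper's route is shorter if one is content to lean on the known properties of the central factorial numbers. Both are valid; yours is arguably the more complete argument.
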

\begin{proof}
The recurrence relation follows by substituting the expression for $\mathcal{S}_k^{(r)}$ in $(\ref{eq:795})$ into the identity $(\ref{eq:80})$ and rearranging to establish that the triangular array of integers is in fact the integer sequence OEIS A20459. This sequence is a signed version of A008955, with rows in reverse order, where A008955 is the triangle of \emph{central factorial numbers}.
\end{proof}
We give the first few $\mathcal{S}_k^{(r)}$ in the table below.
\[
\begin{array}{|c||c|c|c|c|c|c|c|c|}
\hline k \backslash r&r=0&r= 1 &r= 2 &r= 3 &r= 4 &r= 5 & r=6 & r=7 \\
\hline\hline  k=0 &1 & 0 & 0 & 0 & 0 & 0 & 0 & 0 \\
\hline k=1 &0 & 1 & 0 & 0 & 0 & 0 & 0 & 0 \\
\hline k=2 & 0 & -1 & 1 & 0 & 0 & 0 & 0 & 0 \\
\hline k=3 & 0 & 4 & -5 & 1 & 0 & 0 & 0 & 0 \\
\hline k=4 & 0 & -36 & 49 & -14 & 1 & 0 & 0 & 0 \\
\hline k=5 & 0 & 576 & -820 & 273 & -30 & 1 & 0 & 0 \\
\hline k=6 & 0 & -14400 & 21076 & -7645 & 1023 & -55 & 1 & 0 \\
\hline k=7 & 0 & 518400 & -773136 & 296296 & -44473 & 3003 & -91 & 1 \\ \hline
\end{array}
\]
From the recurrence in $(\ref{eq:80})$ it follows that $\mathcal{S}_k^{(k)}=1$, and
\[
\mathcal{S}_k^{(k-1)}=-\frac{(2k-1)}{3}\binom{k}{2},\qquad \mathcal{S}_k^{(1)}=(-1)^{k-1}\left ((k-1)!\right )^2.
\]
Another identity listed in the OEIS for the integer sequence A204579 is given by
\[
\mathcal{S}_k^{(r)}=\sum_{i=0}^{2r}(-1)^{k+i}s(k,i)\,s(k,2r-i),
\]
where as previously defined, $s(k,r)$ are the Stirling numbers of the first kind. Given the similarities between the $\mathcal{S}_k^{(r)}$ and both the Stirling numbers of the first kind and the Legendre-Stirling numbers of the first kind, we refer to the numbers $\mathcal{S}_k^{(r)}$ as the \emph{generalised Stirling numbers of the first kind}.

It is demonstrated by Andrews et al. \cite{andrews2} that the inverse of the lower triangular matrix whose entries are the Legendre-Stirling numbers of the second kind is the lower triangular matrix whose entries are the Legendre-Stirling numbers of the first kind (this inverse matrix relation also holds between the classical Stirling numbers of both kinds). The theory underlying these inverse characterisations is outlined in the paper by Milne and Bhatnagar \cite{milne}.

Applying this theory to the previous triangular array, let $\mathcal{T}_k^{(r)}$ denote the entries of the inverse table for the $\mathcal{S}_k^{(r)}$, given below.
\[
\begin{array}{|c||c|c|c|c|c|c|c|c|}
\hline k \backslash r&r=0&r= 1 &r= 2 &r= 3 &r= 4 &r= 5 & r=6 & r=7 \\
\hline\hline  k=0 & 1 & 0 & 0 & 0 & 0 & 0 & 0 & 0 \\
\hline k=1 & 0 & 1 & 0 & 0 & 0 & 0 & 0 & 0 \\
\hline k=2 & 0 & 1 & 1 & 0 & 0 & 0 & 0 & 0 \\
\hline k=3 & 0 & 1 & 5 & 1 & 0 & 0 & 0 & 0 \\
\hline k=4 & 0 & 1 & 21 & 14 & 1 & 0 & 0 & 0 \\
\hline k=5 & 0 & 1 & 85 & 147 & 30 & 1 & 0 & 0 \\
\hline k=6 & 0 & 1 & 341 & 1408 & 627 & 55 & 1 & 0 \\
\hline k=7 & 0 & 1 & 1365 & 13013 & 11440 & 2002 & 91 & 1 \\
\hline
\end{array}
\]
Then the $\mathcal{T}^{(r)}_k$ satisfy the recurrence relation
\be
\mathcal{T}^{(r)}_k=r^2\mathcal{T}_{k-1}^{(r)}+\mathcal{T}_{k-1}^{(r-1)}.
\label{eq:82}
\ee
and produce the integer sequence of \emph{central factorial numbers of the second kind}, listed as A036969 in the OEIS. These numbers are also referred to in the OEIS entry as \emph{generalised Stirling numbers of the second kind}.

Having traversed from the Stirling numbers through the Legendre-Stirling numbers to the generalised Stirling numbers, we have now come ``full circle'' in terms of Faulhaber's sum $\Sigma n^m$ and the Bernoulli numbers $B_k$. With the established notation for the generalised Stirling numbers of the second kind (or integer central factorial numbers), it is shown by Knuth \cite{knuth} that
\[
\Sigma n^{2m-1} = \sum_{k=1}^m (2k-1)! \mathcal{T}_m^{(k)}\binom{n+k}{2k},
\]
and
\[
\Sigma n^{2m} = \sum_{k=1}^m (2k-1)! \mathcal{T}_m^{(k)}\frac{k\,(2n+1)}{2k+1}\binom{n+k}{2k}.
\]
For example
\[
\Sigma n^7=5040\binom{n+4}{8}+1680\binom{n+3}{6}+126\binom{n+2}{4}+\binom{n+1}{2},
\]
and so these two formula give integer coefficient expansions for the Faulhaber sums~$\Sigma n^m$.

A related observation is that the generalised Stirling numbers of the second kind $\mathcal{T}_{k}^{(r)}$, can be used to express the $2k$th Bernoulli number $B_{2k}$, such that
\[
B_{2k}=\frac{1}{2}\sum_{r=1}^k(-1)^{r+1}\frac{(r-1)!r!}{2r+1}\mathcal{T}_{k}^{(r)}.
\]
To conclude this section, we now give some more detail on the Legendre-Stirling numbers of the second kind $PS_k^{(r)}$, defined by $PS_k^{(0)}=PS_0^{(r)}=0,$ except $PS_0^{(0)}=1$, and by
\[
PS_k^{(r)}=\sum_{i=1}^r
(-1)^{i+k}\frac{(2i+1)(i^2+i)^k}{(i+r+1)!(r-i)!}.
\]
The properties of these numbers are examined in  \cite{andrews2}, wherein it was shown that
\[
x^n=\sum_{r=0}^n PS_n^{(r)}\langle x \rangle^{\ul r},
\]
so that
$\{\langle x \rangle^{\ul r}\}_{r=0}^{\infty}$ forms a basis for the vector space
of polynomials over some field, under the usual operations of addition and scalar multiplication.

One possible point of interest is that a vertical exponential generating function for the Legendre-Stirling numbers of the second kind is given by
$$\phi_r(t)=\sum_{n=0}^\infty {{PS_n^{(r)}} \over {n!}}t^n=\sum_{j=0}^r (-1)^{r+j} {{(2j+1)e^{j(j+1)t}} \over
{(r+j+1)!(r-j)!}}$$
$$={1 \over {(2r)!}}\sum_{m=0}^{2r} (-1)^m {{2r} \choose m} e^{(r-m)(r+1-m)t},$$
and it is evident that $\phi_r(0)=PS_0^{(r)}=0$.

Within Remark 5.3 of \cite{andrews2} it is noted that $\phi_r(t)$ has the form
$$\phi_r(t)={1 \over {(2r)!}}(e^{2t}-1)^r p_r(e^{2t}),$$
wherein $p_r(x)$ is a certain polynomial of degree $r(r-1)/2$.  These polynomials are not further described in
\cite{andrews2}.  However, we give an explicit expression for them.

The polynomials $p_r(x)$ have as powers only triangular numbers $i(i+1)/2$, satisfy $p_r(1)=0$ for all $r>0$,
$(2r)!p_r(-1)=-2^r$, and $(2r)!p_r(0)=(-1)^r {{2r} \choose r}/(r+1)$, the signed Catalan numbers.  The first few
of these polynomials are given in the following list.
\begin{table}[h]
\centering
%\caption{caption here}
\label{phipolys}
\begin{tabular}{cc}
$j$ & $(2j)!p_j(x)$\\ \hline
0 &  $1$ \\
1 &  $x-1$\\
2 &  $x^3-3x+2$\\
3 & $x^6-5x^3+9x-5$\\
4 & $x^{10}-7x^6+20x^3-28x+14$\\
5 & $x^{15}-9x^{10}+35x^6-75x^3+90x-42$\\
6 & $x^{21}-11x^{15}+54x^{10}-154x^6+275x^3-297x+132$\\
\end{tabular}
\end{table}
Introducing the coefficients
$$g(k,i)={{(k-2i+1)} \over {(k-i+1)}}{k \choose i}$$
enables us to write
$$p_r(x)=(-1)^r\sum_{i=0}^r g(2r,r-i)x^{i(i+1)/2}.$$

\section{Binomial Expansions and Interpolating Polynomials}
In this final section, we use the previously developed theory to examine some binomial decompositions and obtain interpolating polynomials for integer powers of real numbers. In doing so we find, as with Faulhaber's sums $\Sigma^r n^m$ that the even and odd power interpolating polynomials have different ``shapes''. Specifically the even power interpolating polynomials have the form 1 plus a polynomial in $x(x+1)$, whereas for the odd power interpolating polynomials have the form 1 plus a polynomial in the odd powers of $x$, commencing $x^{-1}, x^1, x^3, x^5,\ldots$.
\begin{lemma}
With $Ps_k^{(r)}$ the Legendre-Stirling numbers of the first kind and $\mathcal{S}_k^{(r)}$ the generalised Stirling numbers of the first kind, we can express the binomial coefficients $\binom{m+k}{2k}$ and $\binom{m+k-1}{2k-1}$ such that
\be
\binom{m+k}{2k} =\frac{\langle m(m+1) \rangle^{\ul k}}{(2k)!}=\frac{\left (m+k \right )^{\ul{2k}}}{(2k)!}= \frac{1}{(2k)!}\sum_{r=0}^k Ps_k^{(r)}(m(m+1))^r.
\label{eq:67}
\ee
and
\be
\binom{m+k-1}{2k-1}=m\prod_{t=1}^{k-1}(m^2-t^2) =\frac{\left ( m+k-1 \right )^{\ul{2k-1}}}{(2k-1)!}= \frac{1}{(2k-1)!}\sum_{r=1}^k \mathcal{S}_k^{(r)}m^{2r-1}.
\label{eq:675}
\ee
\end{lemma}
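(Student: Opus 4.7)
The plan is to establish both identities by first converting each binomial coefficient into a product, then pairing factors symmetrically around the midpoint, and finally applying the horizontal generating functions for the two Stirling-type number families already recorded in Section~4.

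For the first identity, I would begin with the definition $\binom{m+k}{2k}=(m+k)^{\underline{2k}}/(2k)!$ and expand
\[
(m+k)^{\underline{2k}}=(m+k)(m+k-1)\cdots(m+1)\,m\,(m-1)\cdots(m-k+1).
\]
There are $2k$ factors, and I would pair the factor $(m+k-r)$ with $(m-(k-1-r))$ for $r=0,1,\ldots,k-1$, noting that $(m+k-r)(m-(k-1-r))=m(m+1)-(k-1-r)(k-r)$. Reindexing via $t=k-1-r$ gives the telescoped product
\[
(m+k)^{\underline{2k}}=\prod_{t=0}^{k-1}\bigl(m(m+1)-t(t+1)\bigr)=\langle m(m+1)\rangle^{\underline{k}},
\]
which is exactly the factorisation already used in the proof of Lemma~4.6. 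Applying the horizontal generating function of Lemma~4.3 with argument $m(m+1)$ replaces this product with $\sum_{r=0}^{k}Ps_k^{(r)}(m(m+1))^r$, completing the first chain of equalities.

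For the second identity, the starting point is $\binom{m+k-1}{2k-1}=(m+k-1)^{\underline{2k-1}}/(2k-1)!$. The falling factorial now has $2k-1$ consecutive factors running from $m+k-1$ down to $m-k+1$, which is symmetric around the middle factor~$m$. Pairing $(m+t)$ with $(m-t)$ for $t=1,\ldots,k-1$ and keeping $m$ as the central term yields
\[
(m+k-1)^{\underline{2k-1}}=m\prod_{t=1}^{k-1}(m^2-t^2).
\]
To finish, I would use the generating function identity $\prod_{t=0}^{k-1}(x-t^{2})=\sum_{r=1}^{k}\mathcal{S}_k^{(r)}x^{r}$ from equation~(\ref{eq:815}) of Lemma~4.8. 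Setting $x=m^{2}$ and pulling the $t=0$ factor $m^{2}$ out in front gives $m^{2}\prod_{t=1}^{k-1}(m^{2}-t^{2})=\sum_{r=1}^{k}\mathcal{S}_k^{(r)}m^{2r}$; dividing by $m$ delivers $m\prod_{t=1}^{k-1}(m^{2}-t^{2})=\sum_{r=1}^{k}\mathcal{S}_k^{(r)}m^{2r-1}$, as required.

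Neither half contains a genuinely difficult step: the structural content is already in Lemmas 4.3--4.8, and the only thing to verify carefully is that the two pairing schemes produce exactly the products $\langle m(m+1)\rangle^{\underline{k}}$ and $m\prod_{t=1}^{k-1}(m^{2}-t^{2})$. The small point to watch is the index bookkeeping in the second identity, namely that extracting the $t=0$ factor from the generating function product and dividing through by $m$ precisely shifts $\mathcal{S}_k^{(r)}m^{2r}$ to $\mathcal{S}_k^{(r)}m^{2r-1}$ without disturbing the range $r=1,\ldots,k$ (the $r=0$ column of $\mathcal{S}_k^{(r)}$ being zero for $k\geq 1$, which is consistent with the table following Lemma~4.8).
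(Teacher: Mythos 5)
Your argument is correct and follows essentially the same route as the paper: the symmetric pairing of factors in $(m+k)^{\underline{2k}}$ is exactly the computation carried out in the proof of Lemma~4.6, and the remaining equalities are the horizontal generating functions of Lemmas~4.3 and~4.8, invoked just as you do. The only point worth adding is that the paper's displayed chain for the second identity omits the factor $\tfrac{1}{(2k-1)!}$ in front of $m\prod_{t=1}^{k-1}(m^{2}-t^{2})$; your derivation supplies it in the right place.
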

For example, if $k=4$, then
\[
\binom{m-4+8}{8} =\frac{\langle m(m+1) \rangle^{\ul 4}}{8!}= \frac{1}{8!}\sum_{r=0}^4 Ps_4^{(r)}(m(m+1))^r
\]
\be =\frac{1}{8!}\left ( (m(m+1))^4 -20(m(m+1))^3+108(m(m+1))^2-144(m(m+1)) \right ).
\label{eq:600}
\ee
\begin{corollary} For $m\geq 1$ a positive integer, we have
\be
P_m^{\rm Inv}(x)=\sum_{k=0}^m\frac{2m+1}{(2k+1)!}\left (\sum_{r=0}^k Ps_k^{(r)}(m(m+1))^r \right )x^{m-k},
\label{eq:thm71}
\ee
and
\be
Q_m^{\rm Inv}(x)=\sum_{k=0}^m\frac{2m}{(2k)!}\left (\sum_{r=0}^k \mathcal{S}_k^{(r)}m^{2r-1}\right )x^{m-k},
\label{eq:thm712}
\ee
so that for $m\geq 0$, we may write
\be
a^{2m+1}=\sum_{j=0}^m P_m^{\rm Inv}(j(j+1)) =1+\sum_{k=0}^m \frac{2m+1}{(2k+1)!}\sum_{r=0}^k Ps_k^{(r)}(m(m+1))^r\sum_{j=1}^{a-1}(j^2+j)^{m-k}
\label{eq:66}
\ee
\be
=1+(2m+1)\sum_{r=0}^m (m(m+1))^r\sum_{k=r}^m \frac{Ps_k^{(r)}}{(2k+1)!}\sum_{j=1}^{a-1}(j^2+j)^{m-k},
\label{eq:661}
\ee
and for $m\geq 1$
\be
a^{2m}=(-1)^{a-1}+\sum_{k=0}^m \frac{2m}{(2k)!}\sum_{r=0}^k \mathcal{S}_k^{(r)}m^{2r-1}\sum_{j=1}^{a-1}(-1)^{a+j-1}(j^2+j)^{m-k}
\label{eq:665}
\ee
\be
=(-1)^{a-1}+(2m)\sum_{r=0}^m m^{2r-1}\sum_{k=r}^m \frac{\mathcal{S}_k^{(r)}}{(2k)!}\sum_{j=1}^{a-1}(-1)^{a+j-1}(j^2+j)^{m-k}.
\label{eq:667}
\ee
\end{corollary}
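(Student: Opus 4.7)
The plan is to derive $(\ref{eq:thm71})$ and $(\ref{eq:thm712})$ directly from the Legendre--Stirling and generalised--Stirling binomial decompositions $(\ref{eq:67})$ and $(\ref{eq:675})$ of the preceding lemma, and then to deduce the integer power identities $(\ref{eq:66})$--$(\ref{eq:667})$ by invoking Theorem~3.

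First I would combine the defining sums
\[
P_m^{\rm Inv}(x)=\sum_{k=0}^{m}T_k(m)\,x^{m-k},\qquad Q_m^{\rm Inv}(x)=\sum_{k=0}^{m}U_k(m)\,x^{m-k},
\]
with the closed forms
\[
T_k(m)=\frac{2m+1}{2k+1}\binom{m+k}{2k}=\frac{2m+1}{(2k+1)!}(m+k)^{\underline{2k}},\qquad U_k(m)=\frac{m}{k}\binom{m+k-1}{2k-1}=\frac{2m}{(2k)!}(m+k-1)^{\underline{2k-1}},
\]
the last rewriting using $\frac{m}{k(2k-1)!}=\frac{2m}{(2k)!}$. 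Substituting the falling-factorial expansions $(m+k)^{\underline{2k}}=\sum_{r=0}^{k}Ps_k^{(r)}(m(m+1))^{r}$ and $(m+k-1)^{\underline{2k-1}}=\sum_{r=1}^{k}\mathcal S_k^{(r)}m^{2r-1}$ supplied by the lemma then reads off $(\ref{eq:thm71})$ and $(\ref{eq:thm712})$ by linearity in $k$. A sanity check at $k=0$: $T_0(m)=1$ gives the correct leading coefficient of $P_m^{\rm Inv}$, and the convention $\mathcal S_0^{(0)}=1$ paired with $m^{-1}$ at $r=0$ reproduces $U_0(m)=2$, consistent with the definition of $Q_0^{\rm Inv}$.

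Next I would invoke Theorem~3 twice. Equation $(\ref{eq:m4850})$ reads $\sum_{j=0}^{a-1}P_m^{\rm Inv}(j(j+1))=a^{2m+1}$; stripping off the $j=0$ term $P_m^{\rm Inv}(0)=T_m(m)=1$ and substituting $(\ref{eq:thm71})$ into the remaining summands gives $(\ref{eq:66})$, while interchanging the $k$- and $r$-summations produces $(\ref{eq:661})$. For the even case I would take $b=0$ in $(\ref{eq:m4816})$ and use $0^{2m}=0$ (valid since $m\geq 1$) to get $\sum_{j=0}^{a-1}(-1)^{a+j-1}Q_m^{\rm Inv}(j(j+1))=a^{2m}$; isolating the $j=0$ summand as $(-1)^{a-1}Q_m^{\rm Inv}(0)=(-1)^{a-1}$ and inserting $(\ref{eq:thm712})$ then delivers $(\ref{eq:665})$, with the same reordering of sums producing $(\ref{eq:667})$.

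The main obstacle is purely notational bookkeeping: one must verify that the $k=0$ entries of the two Stirling expansions reproduce the correct leading coefficients $T_0(m)$ and $U_0(m)$, that the isolated $j=0$ contributions give exactly the additive constants $1$ and $(-1)^{a-1}$ on the right-hand sides, and that the finite exchange of $k$- and $r$-summations passes through cleanly. No genuinely new combinatorial identities are required beyond the preceding lemma together with Theorem~3.
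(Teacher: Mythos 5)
Your proof is correct and follows essentially the same route as the paper: both substitute the Legendre--Stirling and generalised--Stirling expansions of $\binom{m+k}{2k}$ and $\binom{m+k-1}{2k-1}$ from the preceding lemma into the telescoped sums supplied by Theorem~3 and then exchange the order of the $k$- and $r$-summations. The only difference is cosmetic --- you first record the polynomial identities for $P_m^{\rm Inv}$ and $Q_m^{\rm Inv}$ and check the $k=0$ and $j=0$ boundary terms explicitly, whereas the paper starts directly from the already-expanded telescoped sum in $(\ref{eq:m50})$ and dismisses the even case as following ``similarly''.
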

\begin{proof}[Proof of Corollary]
Reversing the order of summation for $k$ in $(\ref{eq:m50})$ gives
\[
a^{2m+1}=1+(2m+1)\sum_{k=0}^m \frac{1}{2k+1}\binom{m+k}{2k}\sum_{j=1}^{a-1}(j^2+j)^{m-k},
\]
and substituting for the binomial coefficient with $(\ref{eq:67})$, we obtain the identity $(\ref{eq:66})$. Noting that an $r$ value occurs for every $k$ value with $k\geq r$, the identity $(\ref{eq:661})$ then follows. The expressions for $a^{2m}$ in terms of the expansion for $\binom{m+k-1}{2k-1}$ given in
 $(\ref{eq:665})$ and $(\ref{eq:667})$ follow similarly.
\end{proof}

\begin{defn}
Let $\mathcal{V}$ be the subset of polynomials of $\mathbb{Q}[x]$ of even degree,
spanned by $\{\langle x(x+1) \rangle^{\ul r}\}_{r=0}^{\infty}$, so that for $q(x)\in \mathcal{V}$,
we can express $q(x)$ as a linear combination of the $(x(x+1))^r$ with rational coefficients.

In a similar fashion define $\mathcal{W}$ to be the subset of polynomials of $\mathbb{Q}[x]$ of odd degree over $\mathbb{Q}$,
such that for $w(x)\in \mathcal{W}$, we can express $w(x)$ as a linear combination of the $x^{2r-1}$, with $r\geq 0$. Hence
$w(x)$ can contain an $x^{-1}$ term as demonstrated in the example for $f_7^{(6)}(x)\in \mathcal{W}$ following the proof of Lemma 5.3.

\end{defn}
We now consider some properties
of these polynomials.

\begin{lemma}
Let $q(x)\in \mathcal{V}$ be a polynomial of degree $2n$, so that
\[
q(x)=\sum_{r=0}^n a_r(x(x+1))^r,
\]
with $a_r\in \mathbb{Q},\,\, 0\leq r \leq n$. Then the coefficients of $x^t$, $0\leq t \leq 2n$ in $q(x)$ are given by
\be
\sum_{t=0}^{\hbox{\rm\small{min}}\,(n-r,r-1)}a_{r+t}\binom{r+t}{r-t-1},
\label{eq:60}
\ee
when $t=2r-1$ is odd, and
\be
\sum_{t=0}^{\hbox{{\rm\small{min}}}\,(n-r,r)}a_{r+t}\binom{r+t}{r-t},
\label{eq:61}
\ee
when $t=2r$ is even.
\end{lemma}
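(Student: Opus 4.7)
The plan is to expand each summand $(x(x+1))^r = x^r(x+1)^r$ with the ordinary binomial theorem, swap the order of summation, and then read off the coefficient of $x^t$. No Fibonacci/Lucas or Stirling machinery is needed; this is a pure bookkeeping exercise.

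First I would write
\[
(x(x+1))^r = x^r \sum_{j=0}^{r}\binom{r}{j} x^{j} = \sum_{j=0}^{r}\binom{r}{j}\, x^{r+j},
\]
so that substituting into $q(x)=\sum_{r=0}^{n} a_r (x(x+1))^r$ and collecting by total power yields
\[
q(x) = \sum_{r=0}^{n}\sum_{j=0}^{r} a_r \binom{r}{j} x^{r+j}.
\]
Setting $s=r+j$ (so $j=s-r$) and noting that, for a fixed power $x^{s}$, the constraint $0\le j\le r$ becomes $\lceil s/2\rceil \le r\le s$, while the constraint $r\le n$ caps $r$ at $\min(s,n)$, the coefficient of $x^{s}$ in $q(x)$ is
\[
[x^{s}]\,q(x) = \sum_{r=\lceil s/2\rceil}^{\min(s,n)} a_r \binom{r}{\,s-r\,}.
\]

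Next I would split into the two parity cases, matching the indexing of the statement. For the odd case, write $s=2r-1$ (with $r$ now the ambient index of the lemma, so $\lceil s/2\rceil=r$) and re-index the summation variable by $s'=r+t$; the upper limit $\min(2r-1,n)$ translates into $t\le \min(r-1,n-r)$, and the binomial becomes $\binom{r+t}{\,r-1-t\,}$, giving
\[
[x^{2r-1}]\,q(x) = \sum_{t=0}^{\min(n-r,\,r-1)} a_{r+t}\binom{r+t}{r-t-1},
\]
as stated. For the even case, write $s=2r$ and again substitute $s'=r+t$; the bound $s'\le\min(2r,n)$ becomes $t\le\min(r,n-r)$, and the binomial is $\binom{r+t}{\,r-t\,}$, giving the claimed formula.

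The only genuinely delicate step is keeping the index ranges straight after the change of variable $s'=r+t$: the two separate constraints ($r\le n$ and $j\le r$ from the original double sum) must both survive, which is precisely why a $\min(n-r,r)$ (resp.\ $\min(n-r,r-1)$) appears rather than a single upper bound. Everything else is formal manipulation, so I expect no real obstacle beyond this careful indexing.
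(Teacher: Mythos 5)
Your proposal is correct and is essentially the same argument as the paper's, which likewise expands $(x(x+1))^r = x^r\sum_{j=0}^r\binom{r}{j}x^j$ and collects terms for fixed powers of $x$; you have simply carried out the index bookkeeping (including the reuse of $t$ as both the exponent label and the summation variable in the statement) more explicitly than the paper does.
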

\begin{corollary}
The representation for each $p(x)\in \mathcal{V}$ is unique, so that the polynomial sequence $\{( x(x+1) )^{\ul r}\}_{r=0}^{\infty}$ forms a
linearly independent basis for $\mathcal{V}$, and if
\be
p(x)=\sum_{r=0}^n a_r(x(x+1))^r,\qquad q(x)=\sum_{r=0}^N b_r(x(x+1))^r,\qquad p(x)=q(x)+\lambda,
\label{eq:63}
\ee
for some $\lambda\in \mathbb{Q}$, then we have $n=N$, $a_0=b_0+\lambda$, and $a_r=b_r$ for $r\geq 1$.
\end{corollary}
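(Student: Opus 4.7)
The plan is to reduce everything to the binomial theorem. Expand each generator as
\[
(x(x+1))^r \;=\; x^r(x+1)^r \;=\; \sum_{k=0}^r \binom{r}{k} x^{r+k},
\]
and substitute into $q(x)=\sum_{r=0}^n a_r(x(x+1))^r$ to obtain the double sum $q(x)=\sum_{r=0}^n\sum_{k=0}^r a_r\binom{r}{k}x^{r+k}$. The coefficient of $x^t$ is then the sum of $a_r\binom{r}{k}$ over all pairs $(r,k)$ with $r+k=t$, $0\le k\le r\le n$. Fixing $t$, these constraints give $\lceil t/2\rceil \le r \le \min(t,n)$ and $k=t-r$.

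First I would handle the even case $t=2r_0$. Re-index by $s=r-r_0$, so $0\le s\le \min(r_0,n-r_0)$ and $k=r_0-s$; the coefficient becomes
\[
\sum_{s=0}^{\min(n-r_0,r_0)} a_{r_0+s}\binom{r_0+s}{r_0-s},
\]
which is exactly the claimed expression~(\ref{eq:61}) (with the inner dummy renamed to avoid the collision with the outer symbol $t$). Next, for the odd case $t=2r_0-1$, the same re-indexing gives $0\le s\le \min(r_0-1,n-r_0)$ and $k=r_0-1-s$, yielding the claimed expression~(\ref{eq:60}). This is purely bookkeeping, so the ``main obstacle'' is only clerical: one must be careful that the inner summation variable printed as $t$ in the statement is a fresh index, not the outer $t$; I would state this explicitly at the start of the proof.

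For the Corollary, the key observation is that the polynomials $\{(x(x+1))^r\}_{r\ge 0}$ have strictly increasing degrees $0,2,4,\ldots$, hence are linearly independent over $\mathbb{Q}$. Consequently they form a basis of $\mathcal{V}$ and the expansion of any $q\in\mathcal{V}$ is unique. For the final identification in~(\ref{eq:63}), rewrite $\lambda$ as $\lambda\cdot(x(x+1))^0$; then $p(x)-q(x)-\lambda\equiv 0$ forces, by uniqueness applied in the common space $\mathcal{V}$, both $n=N$ (comparing top-degree terms, assuming $a_n,b_N\neq 0$) and $a_0-b_0-\lambda=0$, $a_r-b_r=0$ for $r\ge 1$. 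No further calculation is needed beyond the coefficient formulas of the Lemma.
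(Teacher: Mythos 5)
Your proof is correct, and for the Corollary it takes a genuinely different (and cleaner) route than the paper. Your derivation of the coefficient formulas (\ref{eq:60}) and (\ref{eq:61}) by expanding $(x(x+1))^r=\sum_{k=0}^r\binom{r}{k}x^{r+k}$ and re-indexing is essentially what the paper does for the Lemma, and your remark about the clash of the dummy index with the outer $t$ is a fair criticism of the statement as printed. Where you diverge is the uniqueness claim: you get linear independence of $\{(x(x+1))^r\}_{r\geq 0}$ immediately from the fact that the degrees $0,2,4,\ldots$ are strictly increasing, so any nontrivial linear combination has a surviving top-degree term; uniqueness of the representation and the identification $n=N$, $a_0=b_0+\lambda$, $a_r=b_r$ for $r\geq 1$ then follow at once by absorbing $\lambda$ into the $r=0$ term. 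The paper instead runs a descending induction through the explicit coefficient formulas: it compares the coefficient of $x^{2n}$ via (\ref{eq:61}) with $r=n$ to get $n=N$ and $a_n=b_n$, then the coefficient of $x^{2n-1}$ via (\ref{eq:60}) to get $a_{n-1}=b_{n-1}$, and so on down to $a_0=b_0+\lambda$. Your argument is shorter and does not actually need the Lemma at all; the paper's argument is more computational but has the mild virtue of exercising the formulas (\ref{eq:60})--(\ref{eq:61}) that are used again later (e.g.\ in the proof of the final Theorem of Section 5). Both are valid, and both share the same implicit normalisation $a_n\neq 0$, $b_N\neq 0$ needed for the conclusion $n=N$ to be literally true.
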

\begin{proof}
The result follows by writing
\[
\sum_{r=0}^n a_r(x(x+1))^r= \sum_{r=0}^n a_r x^r\sum_{j=0}^r x^j\binom{k}{j},
\]
and collecting terms for fixed powers of $x$.

To see the Corollary we assume that
$a_i\neq b_i$ for some $0\leq i\leq n,\, N$, and $a_n\neq 0$, $b_N\neq 0$.
The polynomial $(x(x+1))^r$ contains terms involving $x^r,x^{r+1},\ldots,x^{2r}$, and so by considering the coefficients of the
largest power of $x$, we deduce from $(\ref{eq:61})$ with $r=n$, that $n=N$ and $a_n=b_n$. Considering $(\ref{eq:60})$ with $r=n$
then yields $a_{n-1}=b_{n-1}$. Similarly, continuing with $r=n-1$ in $(\ref{eq:61})$ and then $(\ref{eq:60})$ gives us $a_{n-2}=b_{n-2}$,
and  $a_{n-3}=b_{n-3}$ from which the proof follows by induction on $k$, and we find that
$a_0=b_0+\lambda$, as required.
\end{proof}

\begin{lemma}[Principal polynomial lemma]
Let the coefficients $a_r\in\mathbb{Q}[a]$ and $b_r\in\mathbb{Q}[b]$, be polynomials in $a$ and $b$ respectively, defined by
\be
a_r=\sum_{k=r}^m \left (\frac{Ps_k^{(r)}}{(2k+1)!}\sum_{j=1}^{a-1}(j(j+1))^{m-k}\right ),
\label{eq:m66}
\ee
and
\be
b_r=\sum_{k=r}^m \left (\frac{\mathcal{S}_k^{(r)}}{(2k)!}\sum_{j=1}^{b-1}(-1)^{j+b-1}(j(j+1))^{m-k}\right ),
\label{eq:m665}
\ee
Then for each natural number $m$ and integer $a$ we can write
\be
a^{2m+1}=\sum_{r=0}^m \alpha_r(m(m+1))^r,
\qquad
\alpha_r=\begin{cases} (2m+1)a_r &\mbox{if } 1\leq r \leq n, \\
(2m+1)a_r+1 & \mbox{if } r=0,
\end{cases}
\label{eq:m65}
\ee
and
\be
a^{2m}=\sum_{r=0}^m \beta_r m^{2r-1},
\qquad
\beta_r=\begin{cases} (2m)b_r &\mbox{if } 1\leq r \leq n, \\
(2m)b_r+(-1)^{a-1} & \mbox{if } r=0.
\end{cases}
\label{eq:m651}
\ee
Hence with each integer power $a^k$ we can uniquely associate a polynomial $f_a^{(k)}(x)$, defined by
\[
f_a^{(k)}(x)=\sum_{r=0}^{\left [\frac{k}{2} \right ]}\alpha_r(x(x+1))^r,
\]
when $k$ is odd and by
\[
f_a^{(k)}(x)=\sum_{r=0}^{\left [\frac{k}{2} \right ]}\beta_r x^{2r-1},
\]
when $k$ is even.
\end{lemma}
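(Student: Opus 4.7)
The proof is essentially an accounting exercise that reads off the coefficients from the Corollary to Lemma 5.1 and then invokes the uniqueness guaranteed by the Corollary to Lemma 5.2. I would structure the argument in three parts.

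The first part handles the two expansions. Equation $(\ref{eq:661})$ gives
\[
a^{2m+1} = 1 + (2m+1)\sum_{r=0}^m (m(m+1))^r \sum_{k=r}^m \frac{Ps_k^{(r)}}{(2k+1)!}\sum_{j=1}^{a-1}(j(j+1))^{m-k},
\]
and recognising the inner double sum as $a_r$ from $(\ref{eq:m66})$, then folding the additive constant $1$ into the $r=0$ coefficient, immediately produces $a^{2m+1} = \sum_{r=0}^m \alpha_r (m(m+1))^r$ with the stated $\alpha_r$. The identical rearrangement applied to equation $(\ref{eq:667})$, using the definition $(\ref{eq:m665})$ of $b_r$, yields $a^{2m} = (-1)^{a-1} + (2m)\sum_{r=0}^m m^{2r-1} b_r$; absorbing the constant $(-1)^{a-1}$ into the $r=0$ coefficient produces $a^{2m} = \sum_{r=0}^m \beta_r m^{2r-1}$ with the stated $\beta_r$.

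The second part defines $f_a^{(k)}(x)$ by replacing the integer $m$ with the formal variable $x$ in each of the two expansions, and establishes uniqueness. For $k$ odd, the Corollary to Lemma 5.2 guarantees that $\{(x(x+1))^r\}_{r \ge 0}$ is a linearly independent basis of $\mathcal{V}$, so the $\alpha_r$ are forced and $f_a^{(k)}(x)$ is well defined. For $k$ even, linear independence of the Laurent monomials $\{x^{2r-1}\}_{r \ge 0}$ (elementary, since their exponents are pairwise distinct) fixes the $\beta_r$ and hence $f_a^{(k)}(x)$ as an element of $\mathcal{W}$.

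The only delicate point will be the even-power case, where the $r=0$ term is $\beta_0 x^{-1}$: the additive constant $(-1)^{a-1}$ must be absorbed into $\beta_0$ with the correct $m$-dependent normalisation so that evaluation at $x=m$ returns $a^{2m}$ exactly. All the substantive analytic content --- the identity $(m+k)^{\underline{2k}} = \langle m(m+1) \rangle^{\underline{k}}$ together with its Legendre-Stirling expansion, and the parallel expansion of $\binom{m+k-1}{2k-1}$ via the generalised Stirling numbers $\mathcal{S}_k^{(r)}$ --- has already been deposited in Lemmas 5.1 and 5.2, so the proof is otherwise routine bookkeeping.
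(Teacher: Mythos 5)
Your proof is correct and follows exactly the paper's own (one-line) argument: the paper simply states that the representations ``follow immediately from equations (\ref{eq:661}) and (\ref{eq:667})'', which is precisely your first part, and your appeal to the Corollary of Lemma 5.2 for uniqueness is the intended justification of the word ``uniquely''. Your ``delicate point'' about the even case is well spotted --- as stated, $\beta_0=(2m)b_0+(-1)^{a-1}$ paired with the basis element $x^{-1}$ does not reproduce the constant $(-1)^{a-1}$ on setting $x=m$ (one needs $\beta_0=(2m)b_0+(-1)^{a-1}m$, or to leave the constant outside the sum as the paper's own example $f_7^{(6)}$ does), so this is a slip in the paper's statement rather than a gap in your argument.
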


\begin{proof}
The polynomial representations follow immediately from equations $(\ref{eq:661})$ and~$(\ref{eq:667})$.
\end{proof}
For example, when $k=\in \{5,6\}$, we have the principle polynomials
\[
f_a^{(5)}(x)=\frac{(-1 + a)}{24} (x (x + 1))^2 +
 \frac{ (-1 + a) (3-13a + 10 a^2)}{36} (x (x + 1))
 \]
 \[ +
 \frac{(-1 + a) a (1 + a) (-2 + 3 a^2)}{3}+1,
\]
and $f_a^{(6)}(x)=$
\[
6 \left(\frac{1}{144} (-1)^a \left((-1)^a \left(3 a^2-2\right)+1\right) x^3+\frac{(-1)^a \left((-1)^a \left(360 a^4-750
   a^2+199\right)-191\right) x}{1440}\right.
\]
\[
\left.+\frac{(-1)^a \left((-1)^a \left(4 a^6-18 a^4+24 a^2-5\right)+5\right)}{8 x}+\frac{(-1)^a
   \left((-1)^a+1\right) x^5}{1440}\right)+(-1)^{a-1}.
\]
It follows that $f_7^{(5)}(x)\in \mathcal{V}$ is given by
\[
f_7^{(5)}(x)=5 \left(\frac{1}{20} x^2 (x+1)^2+\frac{557}{30} x (x+1)+3248\right)+1,
\]
and $f_7^{(6)}(x)\in \mathcal{W}$ by
\[
f_7^{(6)}(x)=6 \left(x^3+575 x+\frac{53568}{x}\right)+1.
\]
%We note that $f_6^{(2)}(n)\in \mathbb{N}$ for $n\not\equiv {1}\pmod{3}$.
We note that the two polynomials both exhibit the even and odd power ``shapes'' described at the beginning of this section, and as expected they give $f_7^{(5)}(2)=16807=7^5$, and $f_7^{(6)}(3)=117649=7^6$.

\begin{rmk}
By construction the polynomial $f_a^{(k)}\left (x\right )$ satisfies $f_a^{(k)}\left ([k/2]\right )=a^{k}$, independently of the value of $a$. Hence for $a\in\mathbb{R}$ and $k\in\mathbb{N}$, we have that $f_a^{(k)}(x)$
is an interpolating polynomial with rational coefficients for the number $a^k$, attaining the value $a^k$ when $x=[k/2]$, and we call $f_a^{(k)}(x)$ the \emph{principal polynomial} of $a^{k}$. Whether these polynomials have any practical use is not yet clear although this may depend on whether geometric interpretations of interest can be established.

One possible area for investigating geometric connections is to examine how varying the $x$ term corresponds geometrically to dilating the (possibly truncated) simplices, which the binomial coefficients are the lattice point enumerators of \cite{beck1}, \cite{beck2}.
\end{rmk}

\small{Mark W. Coffey\\
Colorado School of Mines\\
Golden\\
Colorado 80401\\
email: mcoffey@mines.edu}\\
\vspace{5mm}

\noindent
\small{Matthew C. Lettington\\
School of Mathematics\\
Cardiff University\\
Senghennydd Road\\
Cardiff \\
UK, CF24 4AG\\
email: LettingtonMC@cardiff.ac.uk}
\end{document}